\documentclass{article}
\usepackage{graphicx,wrapfig}
\usepackage[english]{babel}
\usepackage{caption}
\usepackage{subcaption}
\usepackage{amsmath, amsfonts, amssymb, amsthm, bm, mathtools}
\usepackage{mathrsfs}
\usepackage{url}
\usepackage{hyperref}
\usepackage{booktabs}
\usepackage{enumitem}
\usepackage{tikz}
\usepackage{pgfplots}
\usetikzlibrary{decorations.markings, arrows.meta, calc}
\tikzset{
  arrowmid/.style={
    postaction={decorate},
    decoration={markings, mark=at position 0.55 with {\arrow{>}}}
  }
}

\theoremstyle{plain}
\newtheorem{theorem}{Theorem}[section]
\newtheorem{proposition}[theorem]{Proposition}
\newtheorem{lemma}[theorem]{Lemma}

\newtheorem{remark}[theorem]{Remark}
\newtheorem{definition}[theorem]{Definition}
\newtheorem{note}[theorem]{Nota Bene}

\theoremstyle{definition}

\newcommand{\bydef}{\stackrel{\textnormal{\tiny def}}{=}}
\newcommand{\bx}{{\bar{x}}}
\newcommand{\bu}{{\bar{u}}}
\newcommand{\bv}{{\bar{v}}}

\newcommand{\R}{{\mathbb{R}}}
\newcommand{\C}{{\mathbb{C}}}
\newcommand{\Z}{{\mathbb{Z}}}
\newcommand{\N}{{\mathbb{N}}}
\newcommand{\B}{{\mathcal{L}}}
\newcommand{\cA}{{\mathcal{A}}}
\newcommand{\cB}{{\mathcal{B}}}
\newcommand{\cF}{{\mathcal{F}}}
\newcommand{\cM}{{\mathcal{M}}}
\newcommand{\cS}{{\mathcal{S}}}
\newcommand{\cX}{{\mathcal{X}}}
\newcommand{\cY}{{\mathcal{Y}}}
\newcommand{\cZ}{{\mathcal{Z}}}
\newcommand{\cBL}{{\mathcal{BL}}}
\newcommand{\pa}{{\mathrm{pa}}}
\newcommand{\rstar}{{r_\star}}
\newcommand{\tF}{{\tilde{F}}}

\DeclareMathOperator{\DCT}{DCT}

\makeatletter
\newcommand{\owedge}{\mathbin{\mathpalette\make@circled\wedge}}
\newcommand{\make@circled}[2]{%
  \ooalign{$\m@th#1\smallbigcirc{#1}$\cr\hidewidth$\m@th#1#2$\hidewidth\cr}%
}
\newcommand{\smallbigcirc}[1]{%
  \vcenter{\hbox{\scalebox{0.77778}{$\m@th#1\bigcirc$}}}%
}
\makeatother

\usepackage{xcolor}

\begin{document}
\title{
Efficient Rigorous Continuation via Chebyshev Series Expansion I
}
\author{
Maxime Breden
\thanks
{CMAP, CNRS, \'Ecole polytechnique, Institut Polytechnique de
Paris, 91120 Palaiseau, France. {\tt maxime.breden@polytechnique.edu}.}
\and
Olivier H\'{e}not
\thanks
{National Taiwan University, Department of Mathematics, No. 1 Sec. 4 Roosevelt Rd., 10617 Taipei, Taiwan. {\tt olivierhenot@ntu.edu.tw}.}
}

\date{}

\maketitle

\begin{abstract}
We study the global continuation of solution manifolds arising in dynamical systems.
We present a rigorous continuation method based on a Chebyshev series expansion of the solution manifold.
The branch is first approximated by a high-order Chebyshev interpolation polynomial, and an explicit error bound is then obtained by verifying the contraction of a quasi-Newton operator near this approximation.
The contraction is formulated on a weighted $\ell^1$ space, giving a finer control than the typical $C^0$-error bound obtained from the uniform contraction theorem.
In fact, the latter follows directly from our contraction operator.
Furthermore, we discuss how our strategy applies naturally to pseudo-arclength continuation, where the continuation parameter fails to provide a valid local coordinate, and extends to multi-parameter continuation.
Lastly, we detail two applications in which we compute a two-parameter family of steady-states for the Cahn--Hilliard equation, and a one-parameter family of steady-states undergoing saddle-node bifurcations for the Shigesada--Kawasaki--Teramoto system.
\end{abstract}

\begin{center}
{\bf \small Key words.}
{\small solution manifolds, global continuation, Chebyshev series, computer-assisted proof}
\end{center}




\section{Introduction}\label{sec:introduction}

The study of dynamical systems, whether described by ordinary differential equations (ODEs), partial differential equations (PDEs), delay differential equations (DDEs), or discrete maps, is a matter of invariant sets (e.g., fixed-points, periodic orbits, and invariant manifolds).
They constitute organizing centers for the global dynamics, and their bifurcation, emergence, or disappearance, illuminates the mechanisms behind rich and complex behaviours.
Yet, finding such objects is already a nontrivial task, let alone understanding their dependence on parameter variations.
The theory of bifurcation is by now well established~\cite{Kuz13,GucHol13}, but rigorously applying it to a given nonlinear system can still be difficult, and typically only provides local information.
On the other hand, many software libraries have been developed for numerical continuation and bifurcation analysis, e.g., for ODEs~\cite{AUTO07,MATCONT08,COCO13}, PDEs~\cite{pde2path} and DDEs~\cite{DDEBIFTOOL02}.
These tools excel at sketching out bifurcation diagrams, and displaying large branches of invariant sets.
However, they still fall short of proving (in the rigorous mathematical sense) the existence of the branches and bifurcations.

In the past thirty years, mathematical methods have been designed to exploit numerical techniques and \emph{validate a posteriori} global families depending on parameters~\cite{AriKoc10,BerLesMis10,BreLesVan13,GamLesPug16,Plu95,Wan18}.
Proceeding from an approximate branch of solutions, obtained for instance by means of numerical continuation, these rigorous continuation methods establish the existence of a genuine family of solutions in a small and explicit neighborhood of the approximation.
Rigorous continuations have already been used successfully in many contexts, including for instance uniqueness results for a family of semilinear elliptic equations~\cite{McKPacPluRot09,McKPacPluRot12}, an exhaustive description of a bifurcation diagram~\cite{AriKoc10}, the rigorous computation of a bifurcation diagram delimiting energy minimizers for diblock copolymers~\cite{BerWil17}, the resolution of the Marchal conjecture in the three-body problem~\cite{CalGarHenLesMir24}, or the progresses toward a conjecture about the existence of slowly oscillating periodic solutions in a DDE~\cite{Les10} and another conjecture concerning existence of traveling waves in a fourth-order PDE~\cite{BerBreLesMur18}.

More broadly, a rigorous continuation approach is an extension of a class of computer-assisted proofs at fixed parameters, which we now briefly describe.
Consider a zero-finding problem $F(x) = 0$, and an approximate zero $\bx$ of $F$, typically obtained numerically.
The objective is to prove a posteriori that a zero of $F$ exists near $\bx$, by verifying that a Newton-like operator associated to $F$ is a contraction in the vicinity of $\bx$.
An example of such statement is given below.

\begin{theorem}\label{th:NK}
Let $\cX,\cY$ be Banach spaces, $\rstar \in (0, \infty]$, $\bx \in \cX$, $F : \cX \to \cY$ a $C^1$ map, and $A:\cY\to\cX$ an injective linear map.
Assume that there exist positive constants $Y$, $Z_1$, $Z_2 = Z_2(\rstar)$ satisfying
\begin{subequations}
\label{eq:bounds}
\begin{align}
\left\| AF(\bx) \right\|_{\cX} &\leq Y \label{eq:YNK}\\
\left\| I - ADF(\bx) \right\|_{\B(\cX,\cX)} &\leq Z_1 \label{eq:Z1NK}\\
\left\| A\left(DF(x) - DF(\bx)\right) \right\|_{\B(\cX,\cX)} &\leq Z_2 \left\| x-\bx\right\|_{\cX}, \qquad \text{for all } x \in B_{\rstar}(\bx), \label{eq:Z2NK}
\end{align}
\end{subequations}
where $B_{\rstar}(\bx)$ denotes the closed ball centered $\bx$ with radius $\rstar$ in $\cX$, and $\B(\cX,\cX)$ is the space of bounded linear operators from $\cX$ to $\cX$.
If there exists $r \in (0,\rstar]$ such that
\begin{subequations}\label{eq:condr}
\begin{align}
Y + r Z_1 + \frac{r^2}{2}Z_2  &< r \label{eq:condr1NK}\\
Z_1 + r Z_2 &< 1 \label{eq:condr2NK},
\end{align}
\end{subequations}
then the map $x\mapsto x - AF(x)$ is a contraction in $B_{\rstar}(\bx)$, and therefore $F$ has a unique zero $x^\star$ in $B_r(\bx)$.
\end{theorem}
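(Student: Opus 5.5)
The plan is the classical Newton--Kantorovich argument: work with $T(x) \bydef x - AF(x)$, show that $T$ maps the closed ball $B_r(\bx)$ into itself and is a contraction there, apply the Banach fixed point theorem, and then use injectivity of $A$ to turn fixed points of $T$ into zeros of $F$. Since $F$ is $C^1$, so is $T$, with $DT(x) = I - ADF(x)$. I will read the conclusion as contraction on $B_r(\bx)$, the ball on which \eqref{eq:condr} gives control. The hypotheses do not seem to yield contraction on all of $B_{\rstar}(\bx)$, since \eqref{eq:condr2NK} only controls $Z_1 + rZ_2$, not $Z_1 + \rstar Z_2$.

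\emph{Step 1: derivative bound.} For $x \in B_r(\bx) \subset B_{\rstar}(\bx)$, split
\[
DT(x) = \bigl(I - ADF(\bx)\bigr) - A\bigl(DF(x) - DF(\bx)\bigr).
\]
By \eqref{eq:Z1NK} and \eqref{eq:Z2NK} this gives
\[
\|DT(x)\|_{\B(\cX,\cX)} \le Z_1 + Z_2\|x-\bx\|_{\cX} \le Z_1 + rZ_2 =: \kappa,
\]
and $\kappa < 1$ by \eqref{eq:condr2NK}.

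\emph{Step 2: Lipschitz constant.} The ball $B_r(\bx)$ is convex, so for $x,y$ in it the mean value inequality holds. Concretely, write
\[
T(x)-T(y) = \int_0^1 DT\bigl(y+t(x-y)\bigr)(x-y)\,dt,
\]
which is valid because $DT$ is continuous. This yields $\|T(x)-T(y)\|_{\cX} \le \kappa\|x-y\|_{\cX}$.

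\emph{Step 3: self-map.} Write $T(x) - \bx = \bigl(T(x)-T(\bx)\bigr) - AF(\bx)$. The second term has norm at most $Y$ by \eqref{eq:YNK}. For the first term, use the integral formula along the segment $\bx + t(x-\bx)$, whose points lie at distance $t\|x-\bx\|_{\cX} \le tr$ from $\bx$. Step 1 then bounds the integrand by $(Z_1 + tZ_2 r)\,r$. Integrating over $t \in [0,1]$ gives
\[
\|T(x)-\bx\|_{\cX} \le Y + rZ_1 + \tfrac{r^2}{2}Z_2 < r
\]
by \eqref{eq:condr1NK}, so $T(B_r(\bx)) \subset B_r(\bx)$.

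\emph{Step 4: conclusion.} The closed ball $B_r(\bx)$ is a complete metric space, so Banach's fixed point theorem gives a unique fixed point $x^\star \in B_r(\bx)$. From $AF(x^\star)=0$ and injectivity of $A$ we get $F(x^\star)=0$. Conversely, any zero of $F$ in $B_r(\bx)$ is a fixed point of $T$, so the zero is unique in $B_r(\bx)$.

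I expect Step 3 to be the only delicate point. One has to keep the $t$-dependence in the $Z_2$ term to obtain the sharp factor $r^2/2$ rather than $r^2$, and one has to make sure every point used lies inside $B_{\rstar}(\bx)$ so that \eqref{eq:Z2NK} applies; this holds because $r \le \rstar$.
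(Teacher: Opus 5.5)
Your proof is correct and is the standard argument. The paper does not reprove Theorem~\ref{th:NK}; it refers to \cite[Theorem 1.2.2]{Bre25}, where the self-map and contraction properties of $x\mapsto x-AF(x)$ on $B_r(\bx)$ are obtained just as you do, with the integral along the segment from $\bx$ producing the factor $\frac{r^2}{2}Z_2$.

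You are right that the contraction can only be claimed on $B_r(\bx)$, not on $B_{\rstar}(\bx)$ as literally written. For example, take $F(x)=x^2-1$, $\bx=1$, $A=\frac12$, $\rstar=\infty$, $Z_2=1$, $r=\frac12$ and small $Y,Z_1>0$; then $x-AF(x)$ has derivative $1-x$, which is unbounded. Your step $DT(x)=I-ADF(x)$ tacitly uses that $A$ is bounded, which is the intended setting.
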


So as to obtain a contraction, a natural choice is to take $A$ equal to $DF(\bx)^{-1}$, or a suitable approximation of $DF(\bx)^{-1}$ whenever the exact inverse is computationally or analytically demanding.
The application to dynamical systems of statements similar to Theorem~\ref{th:NK} can be found in a vast body of work, see, e.g.,~\cite{BerLes15,NakPluWat19}.
For a proof of the specific version used here, we refer to~\cite[Theorem 1.2.2]{Bre25}.

Based on the above discussion, let us suppose that the problem depends on a parameter $\lambda$; specifically, we consider a continuation problem of the form
\begin{equation}\label{eq:Fxl}
F(x, \lambda) = 0,
\end{equation}
where $F$ is a map from $\cX\times \Lambda$ to $\cY$, and $\Lambda$ is a compact subset of $\R$.
Assume that for some fixed $\lambda = \lambda_0$, a suitable approximate solution $\bar{x} = \bar{x}_0$ and operator $A = A_0$ have been found, together with estimates $Y, Z_1, Z_2$ satisfying~\eqref{eq:bounds} and~\eqref{eq:condr}, so that Theorem~\ref{th:NK} guarantees the existence of a true zero $x_0^\star$ of $F(\cdot,\lambda_0)$ near $\bx_0$.
Provided that the map $F$ is continuous with respect to $\lambda$, we can expect the obtained bounds $Y, Z_1, Z_2$ to also depend continuously on  $\lambda$.
Hence, the open conditions~\eqref{eq:condr} ought to hold for all $\lambda$ in some neighborhood of $\lambda_0$.
In other words, the parameter dependent fixed-point map $x \mapsto x - A_0 F(x, \lambda)$ is a \emph{local uniform contraction in $\lambda$}, thereby implying the existence of a family $\lambda \mapsto x^\star(\lambda)$ of zeros of $F(\cdot, \lambda)$ close to the constant approximation $\lambda \mapsto \bar{x}_0$.
This approach is very crude and the neighborhood of $\lambda_0$ on which \eqref{eq:condr} holds will likely be very small; after all, it is a zeroth-order approximation of the family.
To date, the usual course in the literature on rigorous continuation is to employ the first-order approximation $\bx(\lambda) = \frac{\lambda_1 - \lambda}{\lambda_1 - \lambda_0} \bar{x}_0 + \frac{\lambda - \lambda_0}{\lambda_1 - \lambda_0} \bar{x}_1$, given two nearby parameters $\lambda_0$ and $\lambda_1$, and two corresponding approximate solutions $\bar{x}_0$ and $\bar{x}_1$.
In most cases, the approximate inverse $A$ is still taken constant and equal to $A_0$ for all $\lambda$ in $[\lambda_0,\lambda_1]$, but linear interpolation for $A$ has also been used~\cite{BerQue21}.
Then, the dependency with respect to $\lambda$ is tracked in all the estimates $Y, Z_1, Z_2$ to derive uniform bounds over $[\lambda_0, \lambda_1]$.
Usually this still requires a relatively small segment $[\lambda_0, \lambda_1]$ to satisfy the uniform contraction conditions, necessitating to repeat the process over multiple intervals, with additional verification to ensure that each piece connects to the next to form the branch.
In the end, the global solution branch is obtained using a piecewise linear approximation and a combination of local proofs.
This approach extends to two-parameters families, but the modifications are rather involved with simplicial decompositions, and in practice the validation procedure is computationally expensive~\cite{GamLesPug16,ChuQue23}.

In contrast, an alternative approach has recently been successful in using a global, high-order approximation of a solution curve, based on Taylor expansions in~\cite{AriGazKoc21}, and on more general series expansions in~\cite{Bre23}.
This new method allows for several significant improvements over the existing low-order and local rigorous continuation techniques.
We present and detail in this paper the use of Chebyshev series expansions with respect to parameters $\lambda$, taking values in a compact subset $\Lambda$ of $\R^d$.
We highlight four notable features of our strategy:
\begin{enumerate}
\item \textbf{Analytical estimates.}
The seemingly natural space in which to derive the continuation estimates is $C^0(\Lambda,\cX)$, the space of continuous functions over $\Lambda$ with values in $\cX$, as we want to apply the uniform contraction mapping theorem on $\cX$.
However, the high-order approximations based on series expansions suggest to instead derive the continuation estimates in $\ell^1_\omega(\cX)$, a subspace of $\cX^\mathbb{N}$ with a weighted $\ell^1$-norm (defined in Section~\ref{sec:parameter_continuation}).
A particularly attractive feature of this setting is that the continuation estimates become straightforward generalizations of the pointwise (i.e., at fixed parameter values) estimates.
This was already noticed on some examples in~\cite{Bre23}, and will be explored more generally in this paper.

\item \textbf{Computational cost.}
For pointwise computer-assisted proofs of the type discussed above, a crucial role is played by a finite-dimensional subspace, say of dimension $K$, used to represent the approximate solution $\bx$ and construct the approximate inverse $A$, assuming for simplicity that the same subspace is used for both.
The proof entails manipulating $K$-by-$K$ matrices, and its computational bottleneck is related to how large $K$ must be.
When employing a series expansion in the $\lambda$ variable, yet another finite-dimensional subspace is introduced, say of dimension $N$, and combining the two leads in general to objects of size $K^2 N^2$, see~\cite{Bre23}.
In this work, we perform continuation using Chebyshev expansions due to their excellent approximation properties~\cite{Tre13} and their close connection to Fourier series.
This relationship leads to efficient computation of operations such as multiplication, evaluation, and interpolation via the Discrete Fourier Transform, for which Fast Fourier Transform (FFT) algorithms can be leveraged.
By exploiting these properties, we develop rigorous continuations algorithms that operate on data structures of size at most $K^2 N \log N$.
Their implementation is available via the open-source software \texttt{RadiiPolynomial} \cite{RadiiPolynomial.jl}.

\item \textbf{Analytic parameterization.}
If $F$is sufficiently regular with respect to $\lambda$, then one can in principle use the implicit function theorem to control derivatives of $\lambda \mapsto x^\star(\lambda)$.
However, obtaining explicit estimates on derivatives of $x^\star$ from a $C^0$ control provided by rigorous continuation requires extra work~\cite{AriKoc10}.
In contrast, in this article, we prove the contraction in a suitably weighted $\ell^1$ space, modeling the regularity of the solution.
This leads to a stronger contraction argument than the $C^0$-norm, and directly provides us with $C^k$ or analytic estimates on $\lambda \mapsto x^\star(\lambda)$.

\item \textbf{Multi-dimensional continuation.}
Last but not least, rigorous continuation in weighted $\ell^1$ spaces generalizes easily to multi-parameter continuation ($d \ge 2$), yielding an approach which is both simpler and more efficient than the available methods thus far in the literature; see, e.g., \cite{GamLesPug16} for the two-parameters setting.
The latter reference might seem more flexible at first glance, as it is based on a fine triangulation of the solution manifold together, and can therefore accommodate to parameter spaces of arbitrary shapes.
Although multivariate Chebyshev series are naturally defined on hyper-cuboidal regions, we will showcase that one can in fact embed those into arbitrary shapes without difficulty.
Nevertheless, triangulation remains essential for multi-parameter problems, as it provides a practical mean of sampling and organizing values on a solution manifold with intricate geometry.
An illustration of such complicated manifold can be found in the planar circular restricted four-body problem (PCR4BP), where numerical simulations of the manifold of equilibria hint on the occurrence of a triple-cusp bifurcation, knotting the manifold. 
This example, as well as techniques dedicated to optimizing two-dimensional parameter continuation are the subject of a follow-up work.
\end{enumerate} 

The article is organized as follows.
In Section~\ref{sec:parameter_continuation}, we lay out the functional analytic framework for rigorous continuation using Chebyshev expansions in $\ell^1_\omega(\cX)$, and explain how to take advantage of the FFT in practice.
We then discuss in Section~\ref{sec:reparam} the choice of parametrization of the family, first for one-parameter continuation using the curve's arclength, second for multi-parameter continuation including non-rectangular parameter regions.
Lastly, in Section~\ref{sec:applications}, we apply our method to prove a two-parameter family of steady-states for the Cahn--Hilliard equation, and a one-parameter family of steady-states, undergoing saddle-node bifurcations, for the Shigesada--Kawasaki--Teramoto system.



\section{Rigorous parameter continuation}\label{sec:parameter_continuation}

To simplify the exposition in this section, we restrict our attention to a single parameter, and, without loss of generality, we take the parameter domain to be $\Lambda = [-1, 1]$.
Moreover, we consider the case where the solution curve can be expressed as a function of the continuation parameter $\lambda$.
The multi-parameter continuation, as well as situations in which the implicit function theorem does not hold, are deferred to Section~\ref{sec:reparam}.

\subsection{Chebyshev interpolation, series and weighted $\ell^1$ spaces}
\label{sec:basic_stuff_Cheb}

This section reviews background material and introduces the notation followed throughout the article.

Our strategy to prove the existence of a family of solutions $x^\star : [-1,1] \to \cX$, where $\cX$ is some Banach space, begins with its approximation.
It is well known that Lagrange interpolation at Chebyshev nodes is an efficient and practical method for approximating regular functions~\cite{Tre13}.
Given an integer $N \geq 1$, we consider the Chebyshev nodes (of the second kind) given by
\begin{equation}\label{eq:chebyshev_nodes}
\lambda^{(j)} = -\cos \Big( \frac{j\pi}{N} \Big), \qquad j = 0, \dots, N.
\end{equation}
Then, for a set of $N+1$ points $\{x^{(j)}\}_{j=0}^N$ in $\cX^{N+1}$ associated to the above nodes, its interpolation polynomial is given by
\begin{equation}\label{eq:poly}
\lambda \mapsto \sum_{n=-N}^N x_{| n|} T_{| n|}(\lambda),
\end{equation}
where $T_n$ is the $n$-th Chebyshev polynomial of the first kind, recursively given by
\begin{equation}
T_0(\lambda) = 1, \qquad T_1(\lambda) = \lambda, \qquad T_n (\lambda) = 2\lambda T_{n-1}(\lambda) - T_{n-2}(\lambda), \quad n\ge 2,
\end{equation}
and which satisfies the insightful identity
\begin{equation}\label{eq:cheb_identity}
T_n(\cos(\theta)) = \cos(n\theta).
\end{equation}
In particular, the Chebyshev coefficients $y_n$ of the interpolation polynomial can be obtained using the discrete cosine transform (DCT).
More precisely, given $\{x_n\}_{n=0}^N$ and $\{x^{(j)}\}_{j=0}^N$, the DCT and its inverse transform $\DCT^{-1}$ are given by
\begin{subequations}\label{eq:DCT}
\begin{align}
\left[\DCT\left(\{x^{(j)}\}_{j=0}^N\right)\right]_n &\bydef
\begin{cases}
\displaystyle\frac{1}{2N}\left( x^{(N)} + (-1)^n \sum_{j = -N+1}^{N-1} x^{(|j|)} \cos\left( j \frac{n \pi}{N} \right) \right), & n=0,\dots,N-1, \\
\displaystyle\frac{1}{4N}\left( x^{(N)} + (-1)^N \sum_{j = -N+1}^{N-1} (-1)^j x^{(|j|)} \right), & n = N, \\
\end{cases} \\
\left[\DCT^{-1}\left(\{x_n\}_{n=0}^N\right)\right]^{(j)} &\bydef \sum_{n = -N}^N (-1)^n x_{|n|} \cos \Big(n \frac{j\pi}{N} \Big), \qquad j = 0, \dots, N, \\
\end{align}
\end{subequations}
Thus, the Chebyshev coefficients of the polynomial~\eqref{eq:poly} are given by $x_n = \left[\DCT\left(\{x^{(j)}\}_{j=0}^N\right)\right]_n$ for $n = 0, \ldots, N$, and its values at the Chebyshev nodes~\eqref{eq:chebyshev_nodes} are given by $x^{(j)} = \left[\DCT^{-1}\left(\{x_n\}_{n=0}^N\right)\right]^{(j)}$ for $j = 0, \dots, N$.
Notably, the DCT and its inverse can be computed efficiently using FFT algorithms having complexity $O(N\log N)$.

As much as we use polynomials in the Chebyshev basis to approximate the solution curve, we exploit Chebyshev series to represent and prove its existence.
Recall that any Lipschitz continuous function $\psi : [-1,1] \to \cX$ admits a unique, absolutely converging, Chebyshev series
\[
\psi(\lambda) = \sum_{n \in \Z} \psi_{|n|} T_{|n|} (\lambda).
\]
The decay of the coefficients $\psi_n$ reflects the regularity of $\psi$; in the case of an analytic function, the coefficients decay to zero at least geometrically.
Chebyshev series therefore have clear advantages over Taylor series: $(i)$ they can represent functions that are merely Lipschitz, and $(ii)$ they converge rapidly for any analytic function on $[-1,1]$ even when its Taylor series diverges due to the presence of poles in the complex plane.

We model this decay rate via the weighted sequence space
\begin{equation}\label{eq:weighted_ell1}
\ell^1_\omega(\cX) \bydef \left\{ \psi \in \cX^{\mathbb{N}} \, : \, \| \psi \|_{\ell^1_\omega(\cX)} \bydef \sum_{n \in \mathbb{Z}} \left\| \psi_{|n|} \right\|_\cX \omega_{| n|} < \infty \right\},
\end{equation}
where $\omega \in (0, \infty)^\N$ is a sequence of weights satisfying the assumptions:
\begin{enumerate}
\renewcommand{\theenumi}{\textbf{(A\arabic{enumi})}}
\renewcommand{\labelenumi}{\theenumi}
\item\label{A1} $\omega_n\ge 1$ for all $n\ge 1$,
\item\label{A2} $\omega_{| k+n|} \le \omega_{| k|}\omega_{| n|}$ for all $k$ and $n$ in $\Z$.
\end{enumerate}
Two standard examples of such weight sequences are
\begin{subequations}
\begin{alignat}{4}
\omega_n &= \mu^n,\quad &&\text{for all } n \in \N, \qquad &&\text{for some }\mu \ge 1, \qquad &&\text{(Geometric weight)} \label{eq:analytic_weights} \\
\omega_n &= (1+n)^\alpha,\quad &&\text{for all }n\in\N, \qquad &&\text{for some }\alpha>0. \qquad &&\text{(Algebraic weight)} \label{eq:Ck_weights}
\end{alignat}
\end{subequations}
Choosing~\eqref{eq:analytic_weights} with $\mu > 1$ guarantees that the map $\psi \in \ell^1_\omega(\cX)$ extends analytically in the Bernstein ellipse 
$\{ z \in \C \, : \, z = \frac{1}{2} (w + w^{-1}), \, 1 \le |w| < \mu \}$.
On the other hand, the choice~\eqref{eq:Ck_weights} corresponds to functions admitting a finite number of derivatives.
We refer to~\cite[Chapter 7]{Tre13} for sharp estimates relating the regularity of a function and the decay of its Chebyshev coefficients.
Although larger values of $\mu$ in \eqref{eq:analytic_weights} (resp. $\alpha$ in \eqref{eq:Ck_weights}) yield stronger analyticity properties (resp. higher regularity), they bear a cost when verifying the contraction conditions \eqref{eq:condr} as they lead to larger norm estimates in \eqref{eq:bounds}.

\begin{remark}\label{rem:notation}
Throughout this paper, we identify a sequence $\psi = (\psi_0, \psi_1, \dots)$ of Chebyshev coefficients in $\ell^1_\omega(\cX)$ with the corresponding function $\psi:\lambda \mapsto \sum_{n \in \mathbb{Z}} \psi_{|n|} T_{|n|}(\lambda)$.
\end{remark}

The key principle of our rigorous continuation method is to recast the continuation problem~\eqref{eq:Fxl} as a contraction on such weighted sequence spaces.
As a result, the error between the approximate solution curve $\bx$ and the exact one $x^\star$ gives readily a quantitative control on the $C^0$-norm and the derivatives.
Indeed, from assumption~\ref{A1}, for any $\psi\in \ell^1_\omega(\cX)$, the corresponding function $\psi : [-1,1]\to \cX$ is at least continuous and we have
\begin{equation}\label{eq:C0vsell1}
\sup_{\lambda\in[-1,1]} |\psi(\lambda)| \le \| \psi \|_{\ell^1_\omega(\cX)}.
\end{equation}
Additionally, using the Cauchy integral formula (see also~\cite{BleBluBreEng25} for sharper bounds), we have the following estimates, which provide control on derivatives of $\psi$.

\begin{lemma}
\label{lem:derivatives_analytic}
Suppose that the weights are of the form~\eqref{eq:analytic_weights}, with $\mu>1$. Then, for all $k \ge 1$ and all $\psi \in \ell^1_\omega(\cX)$,
\begin{equation}
\sup_{\lambda\in[-1,1]} \left| \psi^{(k)}(\lambda)\right| \le \frac{k!}{\left(\frac{\mu+\mu^{-1}}{2}-1\right)^k} \left\| \psi\right\|_{\ell^1_\omega(\cX)}.
\end{equation}
\end{lemma}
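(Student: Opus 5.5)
The plan is to extend $\psi$ analytically to a complex neighborhood of $[-1,1]$, bound it uniformly there by $\|\psi\|_{\ell^1_\omega(\cX)}$, and then apply the Cauchy estimate on disks centred at points of $[-1,1]$.

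\textbf{Step 1: analytic extension.} Use the Joukowski map $z = \frac{1}{2}(w+w^{-1})$. On the circle $|w|=\rho$, the Chebyshev polynomials satisfy $T_n(z) = \frac{1}{2}(w^n + w^{-n})$. Hence for $1 \le \rho < \mu$ we get $|T_n(z)| \le \rho^n \le \mu^n$ on the closed Bernstein ellipse $E_\rho = \{ \tfrac{1}{2}(w+w^{-1}) : 1\le |w| \le \rho\}$. It follows that the series $\sum_{n\in\Z} \psi_{|n|} T_{|n|}(z)$ converges absolutely and uniformly on $E_\rho$. Its sum is an $\cX$-valued analytic function, since it is a uniform limit of $\cX$-valued polynomials, and it satisfies
\[
\sup_{z\in E_\rho} \|\psi(z)\|_\cX \le \sum_{n\in\Z} \|\psi_{|n|}\|_\cX \rho^{|n|} \le \|\psi\|_{\ell^1_\omega(\cX)}.
\]

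\textbf{Step 2: a disk inside the ellipse.} The ellipse $E_\rho$ has semi-axes $a_\rho = \frac{\rho+\rho^{-1}}{2}$ and $b_\rho = \frac{\rho-\rho^{-1}}{2}$. I claim that the distance from any $\lambda\in[-1,1]$ to $\partial E_\rho$ is at least $\delta_\rho = a_\rho - 1$.

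This is the step needing care. The minimal distance from the segment to the boundary is $\min(a_\rho-1,\, b_\rho)$, but one must check that it is attained at the endpoint along the major axis. Since $a_\rho^2 - b_\rho^2 = 1$, we have $b_\rho = \sqrt{a_\rho^2-1} \ge a_\rho - 1$, because $(a_\rho-1)^2 \le (a_\rho-1)(a_\rho+1)$. For points near $\pm1$, a cleaner argument is confocality: $\pm1$ are the foci of $E_\rho$. Any boundary point $z$ satisfies $|z-1|+|z+1| = 2a_\rho$. For $\lambda\in[-1,1]$ and $z \in \partial E_\rho$, the triangle inequality gives $|z-1| \le |z-\lambda| + |\lambda - 1|$ and $|z+1| \le |z-\lambda| + |\lambda+1|$. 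Summing, $2a_\rho \le 2|z-\lambda| + 2$, i.e. $|z-\lambda| \ge a_\rho - 1$. This proves the claim and removes the obstacle entirely.

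\textbf{Step 3: Cauchy estimate and limit.} For fixed $\lambda\in[-1,1]$, apply the Cauchy integral formula for derivatives on the circle of radius $r<\delta_\rho$ centred at $\lambda$. Vector-valued analyticity is enough here; alternatively, compose with functionals from the Hahn--Banach theorem. This gives
\[
\|\psi^{(k)}(\lambda)\|_\cX \le \frac{k!}{r^k}\sup_{|z-\lambda|=r}\|\psi(z)\|_\cX \le \frac{k!}{r^k}\|\psi\|_{\ell^1_\omega(\cX)}.
\]
Letting $r\to\delta_\rho$ and then $\rho\to\mu$ yields the constant $\frac{k!}{\left(\frac{\mu+\mu^{-1}}{2}-1\right)^k}$. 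The derivative of the real function agrees with the complex one, so the stated bound follows.
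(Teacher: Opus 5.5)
Your proof is correct and follows the route the paper has in mind: the paper only invokes the Cauchy integral formula, and the constant $\frac{\mu+\mu^{-1}}{2}-1$ is exactly the distance from $[-1,1]$ to the boundary of the Bernstein ellipse, which your focal-distance argument establishes cleanly. The aside that the minimal distance equals $\min(a_\rho-1,b_\rho)$ is stated without proof and is not needed, because the confocality argument alone gives $|z-\lambda|\ge a_\rho-1$, so you can drop it.
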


\noindent
Alternatively, for algebraic weights, the next result holds.

\begin{lemma}\label{lem:derivatives_Ck}
Suppose that the weights are of the form~\eqref{eq:Ck_weights}.
\begin{itemize}
\item If $\alpha=2$, then, for all $\psi\in\ell^1_\omega(\cX)$,
\begin{equation}
\sup_{\lambda\in[-1,1]} \left| \frac{\mathrm{d}}{\mathrm{d}\lambda} \psi(\lambda)\right| \le \| \psi \|_{\ell^1_\omega(\cX)}.
\end{equation}
\item If $\alpha=4$, then, for all $\psi\in\ell^1_\omega(\cX)$
\begin{equation}
\sup_{\lambda\in[-1,1]} \left| \frac{\mathrm{d}^2}{\mathrm{d}\lambda^2} \psi(\lambda)\right| \le \frac{1}{3}\| \psi\|_{\ell^1_\omega(\cX)}.
\end{equation}
\end{itemize}
\end{lemma}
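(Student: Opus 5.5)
We prove Lemma~\ref{lem:derivatives_Ck} term by term on the Chebyshev series, using the classical bounds $\|T_n'\|_{C^0([-1,1])} = n^2$ (Markov) and $\|T_n''\|_{C^0([-1,1])} = T_n''(1) = \frac{n^2(n^2-1)}{3}$. Since $\psi\in\ell^1_\omega(\cX)$ with $\omega_n=(1+n)^\alpha$, $\alpha\in\{2,4\}$, the formally differentiated series converges absolutely and uniformly. Hence $\psi$ is $C^1$ (resp.\ $C^2$) and may be differentiated term by term. This gives
\[
\sup_{\lambda\in[-1,1]}\Big\|\psi^{(k)}(\lambda)\Big\|_\cX \le \sum_{n\ge 1} 2\|\psi_n\|_\cX \, \|T_n^{(k)}\|_{C^0}, \qquad k=1,2,
\]
where the factor $2$ comes from counting both $n$ and $-n$ in the norm \eqref{eq:weighted_ell1}. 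The $n=0$ term vanishes since $T_0$ is constant.

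The first step is to establish the two Chebyshev derivative bounds. Writing $\lambda=\cos\theta$, we have $T_n'(\lambda)=n\sin(n\theta)/\sin\theta$. The inequality $|\sin(n\theta)|\le n|\sin\theta|$, proved by induction via $\sin((n+1)\theta)=\sin(n\theta)\cos\theta+\cos(n\theta)\sin\theta$, gives $|T_n'|\le n^2$, with equality at $\lambda=\pm1$.

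For $T_n''$, one route is the expansion $T_n''=\sum c_m T_m$ with nonnegative coefficients. This follows from iterating $T_n'=2n\sum' T_{n-1-2j}$, where the prime indicates the $T_0$ term is halved. Hence $|T_n''(\lambda)|\le \sum c_m = T_n''(1)$. Differentiating the Chebyshev ODE $(1-\lambda^2)T_n''-\lambda T_n'+n^2T_n=0$ at $\lambda=1$, or evaluating it there directly, yields $T_n''(1)=n^2(n^2-1)/3$.

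The second step compares these bounds to the weights. We need $2n^2\le(1+n)^2$ and $\frac{2}{3}n^2(n^2-1)\le \frac13(1+n)^4$. The first holds since $\sqrt2\, n\le 1+n$ fails only for $n\ge 1/(\sqrt2-1)\approx 2.41$. So this naive estimate breaks down for $n\ge3$, and this is the main obstacle.

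The fix is to use the symmetric convention more carefully. The norm counts $n$ and $-n$ separately, i.e.\ it equals $\|\psi_0\|\omega_0+2\sum_{n\ge1}\|\psi_n\|\omega_n$, and the function is $\psi_0+2\sum_{n\ge1}\psi_nT_n$. The factors of $2$ therefore cancel, and we need only
\[
n^2\le(1+n)^2 \qquad\text{and}\qquad \frac{n^2(n^2-1)}{3}\le\frac{(1+n)^4}{3},
\]
both of which are trivially true.

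Assembling, we get $\sup\|\psi'\|\le \sum_{n\ge1}2\|\psi_n\|(1+n)^2\le\|\psi\|_{\ell^1_\omega(\cX)}$, and similarly $\sup\|\psi''\|\le\frac13\|\psi\|_{\ell^1_\omega(\cX)}$. The only genuine care needed is justifying the sup bound on $T_n''$, which the positivity of its Chebyshev expansion handles.
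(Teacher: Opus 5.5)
Your proof is correct and is essentially the paper's argument. The paper expands $\psi'$ (and similarly $\psi''$) in the Chebyshev basis via the derivative formula and bounds $|T_k|\le 1$; this relies on the same nonnegativity of the Chebyshev coefficients of $T_m'$ and $T_m''$ that you use, and it reaches exactly your intermediate bounds $2\sum_{m\ge1}m^2\|\psi_m\|_\cX$ and $\frac{2}{3}\sum_{m\ge2}m^2(m^2-1)\|\psi_m\|_\cX$ before comparing with $(1+m)^2$ and $(1+m)^4$.

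The ``main obstacle'' you raise ($2n^2\le(1+n)^2$ failing for $n\ge3$) is only a bookkeeping slip. The factor $2$ in $\psi=\psi_0+2\sum_{n\ge1}\psi_nT_n$ is matched by the factor $2$ in $\|\psi\|_{\ell^1_\omega(\cX)}=\|\psi_0\|_\cX+2\sum_{n\ge1}\|\psi_n\|_\cX\,\omega_n$, so that detour should simply be removed.
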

\begin{proof}
Starting from the derivative formula for Chebyshev series,
\[
\frac{\mathrm{d}}{\mathrm{d}\lambda} \psi(\lambda) = 2 \sum_{l \ge 0} \left((2l+1)u_{2l+1} + 2\sum_{k \ge 1} (k+2l+1)u_{k+2l+1}T_k(\lambda)\right),
\]
we obtain
\[
\left|\frac{\mathrm{d}}{\mathrm{d}\lambda} \psi(\lambda)\right| \le  2 \sum_{m \ge 1} m^2 |u_m| \le \sum_{m \in \Z}  |u_m| (1 + |m|)^2.
\]
A similar calculation yields
\[
\left|\frac{\mathrm{d}^2}{\mathrm{d}\lambda^2} \psi(\lambda)\right| \le  \frac{2}{3} \sum_{m \ge 2} m^2(m^2-1) |u_m| \le \frac{1}{3}\sum_{m\in\Z}  |u_m| (1+| m|)^4. \qedhere
\]
\end{proof}

We conclude this section by introducing a discrete convolution product, in a slightly broader setting than usual.
Consider Banach spaces $\cX$, $\cY$ and $\cZ$, equipped with a bounded bilinear map $\odot$ such that, for all $x\in\cX$ and $y\in\cY$,
\begin{subequations}\label{eq:opX}
\begin{align}
&\odot:\cX\times\cY \to \cZ, \\
&\| x\odot y \|_\cZ \le \| x \|_\cX \| y \|_\cY.
\end{align}
\end{subequations}
We call \emph{discrete convolution product} the map $\ast:\ell^1_\omega(\cX)\times\ell^1_\omega(\cY) \to \ell^1_\omega(\cZ)$ given, for all $\phi\in\ell^1_\omega(\cX)$ and $\psi\in\ell^1_\omega(\cY)$, component-wise by
\begin{equation}\label{eq:conv}
(\phi * \psi)_n \bydef \sum_{n' \in \Z} \phi_{|n - n'|} \odot \psi_{|n'|}, \qquad n\in\N.
\end{equation}
The fact that this discrete convolution product is well-defined follows readily from assumption~\ref{A2}, and we have
\begin{equation}\label{eq:banach_algebra}
\| \phi * \psi \|_{\ell^1_\omega(\cZ)} \le \| \phi \|_{\ell^1_\omega(\cX)} \| \psi \|_{\ell^1_\omega(\cY)}, \qquad \text{for all }(\phi,\psi)\in\ell^1_\omega(\cX)\times\ell^1_\omega(\cY).
\end{equation}
This discrete convolution product is the natural product rule associated to $\odot$, in the sense that
\begin{equation*}\label{eq:prodlambda}
\phi(\lambda) \odot \psi(\lambda) = \left[\phi * \psi\right](\lambda), \qquad \text{for all } \lambda \in[-1,1].  
\end{equation*}
Moreover, for any $\phi \in \ell^1_\omega(\cX)$, we also get a multiplication operator $\cM(\phi) \in\B(\ell^1_\omega(\cY),\ell^1_\omega(\cZ))$ defined by $\cM(\phi)\psi \bydef \phi\ast \psi$ for all $\psi \in\ell^1_\omega(\cY)$, and which satisfies
\begin{equation}
\| \cM(\phi) \|_{\B(\ell^1_\omega(\cY),\ell^1_\omega(\cZ))} \le \| \phi \|_{\ell^1_\omega(\cX)}.
\end{equation}
One classical setting is the one where $\cX=\cY=\cZ$ is a Banach algebra with multiplication $\odot$. Then, provided the Banach algebra is unital, the norm of the multiplication operator is actually equal to the norm of the underlying element:
\begin{equation}\label{eq:opnormmult}
\| \cM(\phi) \|_{\B(\ell^1_\omega(\cX),\ell^1_\omega(\cX))} = \| \phi \|_{\ell^1_\omega(\cX)}.
\end{equation}
Another important situation for this paper is when 
$\odot:\B(\cX,\cY)\times \cX \to \cY$ denotes the application of a linear map to a vector, 
in which case any element $A \in \ell^1_\omega(\B(\cX,\cY))$ 
gives rise to a multiplication operator $\cM(A)\in\B(\ell^1_\omega(\cX),\ell^1_\omega(\cY))$.

With the main tools in place, we begin by illustrating our rigorous continuation method using a simple example in the next subsection.
A more general treatment for possibly infinite dimensional Banach spaces $\cX$ will be given afterwards in Section~\ref{sec:framework}.

\subsection{An illustrative example: the cubic root}\label{sec:cubic_root}

We consider the problem of obtaining a rigorous enclosure of the real solution curve to
\begin{equation}\label{eq:cubic}
0 = F(x, \lambda) = x^3 - \lambda - 2, \qquad \text{for all } \lambda \in [-1, 1].
\end{equation}
We first show how Theorem~\ref{th:NK} can be applied to obtain a solution for a fixed value of $\lambda$, and then present two different ways of obtaining the entire solution curve on $[-1, 1]$. Obviously this is a very basic example, which is only here for pedagogical purposes. More involved applications are presented in Section~\ref{sec:applications}.

\subsubsection{Proof of a solution at a fixed parameter}

Fix a parameter value $\lambda_\circ \in [-1, 1]$.
In view of applying Theorem~\ref{th:NK} to $F(\cdot,\lambda_\circ)$, here with $\cX = \cY = \R$ and $\| \cdot \|_\cX = | \cdot |$, suppose that an approximate zero $\bx_\circ$ of $F(\cdot,\lambda_\circ)$ and approximate inverse of $D_x F(\bx_\circ, \lambda_\circ)$ have been found:
\begin{equation*}
\bx_\circ \approx \sqrt[^3]{\lambda_\circ + 2}, \qquad
A_\circ \approx D_x F(\bx_\circ, \lambda_\circ)^{-1} = \frac{1}{3\bx_\circ^2}.
\end{equation*}
The $\approx$ symbol highlights that none of the operations involved in the right-hand side need to be rigorous, and can be performed using floating-point arithmetic.

In this simple case, obtaining the required estimates~\eqref{eq:bounds} is straightforward.
Indeed,
\begin{align*}
| A_\circ F(\bx_\circ, \lambda_\circ) | &= | A_\circ (\bx_\circ^3 - \lambda_\circ - 2) | , \\
| 1 - A_\circ D_x F(\bx_\circ, \lambda_\circ) | &= | 1 - 3 A_\circ \bx_\circ^2 |,
\end{align*} 
and, for any $\rstar>0$ and all $x\in B_{\rstar}(\bx_\circ)$,
\begin{align*}
| A_\circ (D_x F(x, \lambda_\circ) - D_x F(\bx_\circ, \lambda_\circ)) |  
&\le |A_\circ| \left(\sup_{x\in B_{\rstar}(\bx_\circ)} | D_x^2F(x,\lambda_\circ)|\right) | x - \bx_\circ | \\
&\le  6|A_\circ| (| \bx_\circ| + \rstar) | x - \bx_\circ |.
\end{align*}
Therefore, we can take
\begin{subequations}\label{eq:cubic_bounds_circ}
\begin{align}
Y &= | A_\circ (\bx_\circ^3 - \lambda_\circ - 2) |, \\
Z_1 &= | 1 - 3 A_\circ \bx_\circ^2 | , \\
Z_2 &= 6|A_\circ| (| \bx_\circ| + \rstar),
\end{align}
\end{subequations}
which can be rigorously computed using interval arithmetic (see, e.g., the software library \cite{IAjl}).
Assuming that the two inequalities~\eqref{eq:condr} hold, Theorem~\ref{th:NK} yields the existence of a zero $x^\star_\circ$ of $F(\cdot,\lambda_\circ)$ near $\bx_\circ$:
\begin{equation}
|x^\star_\circ - \bx_\circ| \le r.
\end{equation}

\begin{note}[Injectivity of $A$]
In order to be allowed to use Theorem~\ref{th:NK}, we should make sure that $A_\circ$ is invertible. This is straightforward, as we simply need to check that the $A_\circ$ we select is non-zero, but we note that this verification is done automatically. Indeed, inequality~\eqref{eq:condr2NK} implies $Z_1<1$, which already enforces $A_\circ\neq 0$. Even though the argument is completely trivial here and barely worth mentioning, we do emphasize it because getting the injectivity of $A$ from the fact that $Z_1<1$ is a common argument that will also be used in more complex situations later on.
\end{note}

\subsubsection{Rigorous continuation in $C^0([-1,1],\cX)$} 
\label{sec:sqrt_C0}

As mentioned in the introduction, one can use a uniform version of Theorem~\ref{th:NK}, such as the following statement, to validate the solution curve $\lambda \mapsto x^\star(\lambda)$ to~\eqref{eq:cubic}.

\begin{theorem}\label{th:NKpara}
Let $\cX,\cY$ be Banach spaces, $\Lambda$ a simply connected compact subset of $\R^d$, $\rstar \in (0, \infty]$, $\bx:\Lambda\to\cX$ a $C^0$ function, $F : \cX \times \Lambda \to \cY$ a $C^0$ map, continuously differentiable with respect to the first variable, and $A:\Lambda \to \B(\cY,\cX)$ a $C^0$ map such that $A(\lambda)$ is injective for all $\lambda\in\Lambda$.
Assume that there exist positive constants $Y$, $Z_1$, $Z_2 = Z_2(\rstar)$ satisfying, for all $\lambda\in\Lambda$,
\begin{subequations}\label{eq:bounds_cont}
\begin{align}
\left\| A(\lambda)F(\bx(\lambda),\lambda) \right\|_{\cX} &\le Y, \label{eq:YNK_cont}\\
\left\| I - A(\lambda)D_xF(\bx(\lambda),\lambda) \right\|_{\B(\cX,\cX)} &\le Z_1, \label{eq:Z1NK_cont}\\
\left\| A(\lambda)\left(D_xF(x,\lambda) - D_xF(\bx(\lambda),\lambda)\right) \right\|_{\B(\cX,\cX)} &\le Z_2 \left\| x-\bx(\lambda)\right\|_{\cX}, \qquad \text{for all } x \in B_{\rstar}(\bx(\lambda)) . \label{eq:Z2NK_cont}
\end{align}
\end{subequations}
If there exists $r \in (0,\rstar]$ such that the conditions~\eqref{eq:condr} are met for the bounds $Y, Z_1, Z_2$ herein, then the map $(h,\lambda) \mapsto h-A(\lambda)F(\bx(\lambda)+h,\lambda)$ is a uniform contraction on $B_r(0)\times\Lambda$.
Therefore, for all $\lambda\in\Lambda$, the map $F(\cdot,\lambda)$ has a unique zero $x^\star(\lambda)$ in $B_r(\bx(\lambda))$ and the function $\lambda \mapsto x^\star(\lambda)$ is $C^0$.
\end{theorem}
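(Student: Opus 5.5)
The plan is to apply the fixed-parameter argument behind Theorem~\ref{th:NK} uniformly in $\lambda$, and then get continuity of $x^\star$ from the uniform contraction principle. Define
\[
T(h,\lambda) \bydef h - A(\lambda)F(\bx(\lambda)+h,\lambda), \qquad (h,\lambda)\in B_r(0)\times\Lambda .
\]
For fixed $\lambda$, write $T(h,\lambda)-T(h',\lambda)$ with the mean value integral,
\[
T(h,\lambda)-T(h',\lambda) = \int_0^1 \Bigl(I - A(\lambda)D_xF(\bx(\lambda)+h'+t(h-h'),\lambda)\Bigr)(h-h')\,\mathrm{d}t .
\]
Split the integrand as $\bigl(I-A(\lambda)D_xF(\bx(\lambda),\lambda)\bigr)$ plus $A(\lambda)\bigl(D_xF(\bx(\lambda),\lambda)-D_xF(\bx(\lambda)+h_t,\lambda)\bigr)$, where $h_t\in B_r(0)$ by convexity. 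The bounds \eqref{eq:Z1NK_cont} and \eqref{eq:Z2NK_cont} then give
\[
\|T(h,\lambda)-T(h',\lambda)\|_\cX\le (Z_1+rZ_2)\|h-h'\|_\cX .
\]
By \eqref{eq:condr2NK}, the contraction constant $\kappa\bydef Z_1+rZ_2<1$ is independent of $\lambda$.

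Next, check self-mapping. Write $T(h,\lambda)=T(0,\lambda)+\bigl(T(h,\lambda)-T(0,\lambda)\bigr)$ and use the same integral with $h'=0$, keeping the factor $t$ in the $Z_2$ term: $\int_0^1 t\,\mathrm{d}t=\tfrac12$. This yields
\[
\|T(h,\lambda)\|_\cX\le Y+rZ_1+\tfrac{r^2}{2}Z_2<r
\]
by \eqref{eq:condr1NK}. Hence $T(\cdot,\lambda)$ maps the closed ball $B_r(0)$ into itself. Since this ball is complete, the Banach fixed point theorem gives, for each $\lambda$, a unique fixed point $h^\star(\lambda)\in B_r(0)$. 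Injectivity of $A(\lambda)$ turns $A(\lambda)F(\bx(\lambda)+h^\star,\lambda)=0$ into $F(x^\star(\lambda),\lambda)=0$ with $x^\star(\lambda)\bydef\bx(\lambda)+h^\star(\lambda)$. Uniqueness of zeros in $B_r(\bx(\lambda))$ follows the same way, because any zero there is a fixed point of $T(\cdot,\lambda)$.

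For continuity, note that $T$ is jointly continuous, since $F$, $\bx$ and $A$ are $C^0$ and application of an operator to a vector is jointly continuous in operator norm. The standard uniform contraction estimate is
\[
\|h^\star(\lambda)-h^\star(\lambda')\|\le \|T(h^\star(\lambda),\lambda)-T(h^\star(\lambda),\lambda')\| + \kappa\,\|h^\star(\lambda)-h^\star(\lambda')\| ,
\]
which gives
\[
\|h^\star(\lambda)-h^\star(\lambda')\|\le (1-\kappa)^{-1}\|T(h^\star(\lambda),\lambda)-T(h^\star(\lambda),\lambda')\| .
\]
The right-hand side tends to $0$ as $\lambda'\to\lambda$ by continuity of $T(h^\star(\lambda),\cdot)$, so $\lambda\mapsto x^\star(\lambda)$ is $C^0$. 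The simple connectedness of $\Lambda$ is not needed in this argument.

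The main obstacle is technical care in the mean value step. One must make sure that the segment between $h$ and $h'$ stays inside $B_{\rstar}$, so that \eqref{eq:Z2NK_cont} applies; this holds since $r\le\rstar$. One must also track the factor $\tfrac12$ to obtain the sharp quadratic term in the self-mapping bound. Everything else is the standard uniform contraction principle.
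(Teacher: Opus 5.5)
Your proof is correct and follows the standard argument the paper intends. The paper gives no separate proof of this theorem and presents it as a uniform-in-$\lambda$ version of Theorem~\ref{th:NK}: the pointwise estimates (self-map via $Y+rZ_1+\frac{r^2}{2}Z_2<r$, contraction via $Z_1+rZ_2<1$) run with $\lambda$-independent constants, then the uniform contraction principle for continuity of $\lambda\mapsto x^\star(\lambda)$. You are also right that simple connectedness (and in fact compactness) of $\Lambda$ plays no role.
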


Let us verify the hypotheses of Theorem~\ref{th:NKpara} to $F$ from~\eqref{eq:cubic}, here with $\Lambda=[-1,1]$, $\cX=\cY=\R$ and $\|\cdot\|_\cX = |\cdot|$.
We need to construct suitable families $\lambda \mapsto \bx(\lambda)$ and $\lambda \mapsto A(\lambda)$, and derive the bounds $Y, Z_1, Z_2$ satisfying~\eqref{eq:bounds_cont}.

First, fix some integer $N \geq 1$ and consider the corresponding Chebyshev nodes $\lambda^{(j)}$ defined in~\eqref{eq:chebyshev_nodes}.
For each node, compute an approximate zero $\bx^{(j)}$ of $F(\cdot,\lambda^{(j)})$ and an approximate inverse $A^{(j)}$ of $D_x F(\bx^{(j)},\lambda^{(j)}) = 3 (\bx^{(j)})^2$.
The functions $\lambda \mapsto \bx(\lambda)$ and $\lambda \mapsto A(\lambda)$ are constructed by computing the corresponding interpolation polynomials, i.e., $\bx \approx \DCT\left(\{\bx^{(j)}\}_{j=0}^N\right)$ and $A \approx \DCT\left(\{A^{(j)}\}_{j=0}^N\right)$.
Once again, we use the $\approx$ symbol to emphasize that, for our purposes, the DCT may be computed approximately, e.g., using floating-point arithmetic; see also Remark~\ref{rem:approx_interp} about when performing a rigorous DCT at this stage is essential.

Then, for any $\rstar > 0$ and $\lambda\in[-1,1]$, we obtain
\begin{align*}
| A(\lambda) F(\bx(\lambda), \lambda) | &= | A(\lambda) (\bx(\lambda)^3 - \lambda - 2) |, \\
| 1 - A(\lambda) D_x F(\bx(\lambda), \lambda) | &= | 1 - 3 A(\lambda) \bx(\lambda)^2 | , \\
| A(\lambda) (D_x F(x, \lambda) - D_x F(\bx(\lambda), \lambda)) | &\le 6 |A(\lambda)| (| \bx(\lambda)| + \rstar) | x - \bx(\lambda) |, \qquad \text{for all }  x\in B_{\rstar}(\bx(\lambda)).
\end{align*}
The right-hand-sides of the above inequalities can be bounded by estimating the supremum over $\lambda$ in $[-1,1]$, which could be directly computed using interval arithmetic.
Yet, mitigating error propagation, it is easier to use the upper bound provided by the $\ell^1_\omega$-norm~\eqref{eq:C0vsell1}, so that we define
\begin{subequations}\label{eq:cubic_bounds_C0}
\begin{align}
Y &= \left\| A * (\bx^{*3} - e_1 - 2e_0) \right\|_{\ell^1_\omega(\R)}, \\
Z_1 &= \left\| e_0-3A * \bx^{*2} \right\|_{\ell^1_\omega(\R)} , \\
Z_2 &= 6\left\| A  \right\|_{\ell^1_\omega(\R)} \left(\left\| \bx  \right\|_{\ell^1_\omega(\R)} + \rstar \right),
\end{align}
\end{subequations}
where $e_0, e_1 \in \ell^1_\omega(\R)$ stand for the unit-norm basis elements
\begin{equation}\label{eq:identity}
e_0 \bydef (1, 0, 0, 0 \dots), \qquad
e_1 \bydef (0, 1/2, 0, 0, \dots),
\end{equation}
or, equivalently, $e_0$ is the constant function $e_0(\lambda) = 1$ and $e_1$ the identity function $e_1(\lambda) = \lambda$.

For any weight sequence $\omega$ for which the assumptions~\ref{A1}-\ref{A2} hold, the constants $Y$, $Z_1$ and $Z_2$ from~\eqref{eq:cubic_bounds_C0} satisfy~\eqref{eq:bounds_cont}.
Assuming that the inequalities~\eqref{eq:condr} also hold, Theorem~\ref{th:NKpara} yields the existence of a solution curve $\lambda \mapsto x^\star(\lambda)$ near $\bx$:
\begin{equation}\label{eq:err_C0}
\left| x^\star(\lambda) - \bx(\lambda) \right| \le r, \qquad \text{for all } \lambda\in[-1,1].
\end{equation}
\begin{note}[Injectivity of $A(\lambda)$]
Here again, we note that the injectivity of $A(\lambda)$ for each $\lambda\in[-1,1]$ is a consequence of $Z_1<1$ (implied by the inequality~\eqref{eq:condr2NK}), that is
\begin{align*}
\left\vert 1 -3A(\lambda)\bx(\lambda)^2 \right\vert \leq \left\| e_0-3A * \bx^{*2} \right\|_{\ell^1_\omega(\R)} < 1 \qquad \text{for all }\lambda\in[-1,1].
\end{align*}
\end{note}

\subsubsection{Rigorous continuation in $\ell^1_\omega(\cX)$}
\label{sec:sqrt_ell1}

While the rigorous continuation approach proposed in Section~\ref{sec:sqrt_C0} is perfectly suitable, and arguably very natural, we present here a slightly different viewpoint, which turns out to be more practical in more sophisticated problems.
We go back to using Theorem~\ref{th:NK}, but instead of applying it for a fixed $\lambda$ on the space $\cX = \cY = \R$, we will employ it on the space $\cX = \cY = \ell^1_\omega(\R)$ representing real-valued functions of $\lambda$.

To that end, let us introduce the map $\cF:\ell^1_\omega(\R) \to \ell^1_\omega(\R)$, given by
\begin{equation}
\cF(x) = x^{*3} - e_1 - 2e_0, \qquad \text{for all } x\in\ell^1_\omega(\R),
\end{equation}
with $e_0$ and $e_1$ as in~\eqref{eq:identity}.
This is precisely the original zero-finding problem $F$ from~\eqref{eq:cubic} lifted in the space $\ell^1_\omega(\R)$, in the sense that
\begin{equation}
[ \cF(x) ](\lambda) = F(x(\lambda),\lambda) = x(\lambda)^3 - \lambda - 2, \qquad \text{for all } \lambda \in [-1, 1].
\end{equation}
In particular, a zero $x^\star\in\ell^1_\omega(\R)$ of $\cF$ is equivalent to a family of solutions of~\eqref{eq:cubic}.

Applying Theorem~\ref{th:NK} to the map $\cF$ requires obtaining an appropriate approximate solution $\bx$ in $\ell^1_\omega(\R)$ and a suitable operator $\cA$ in $\B(\ell^1_\omega(\R), \ell^1_\omega(\R))$. 
In fact, we can follow similar steps as in Section~\ref{sec:sqrt_C0}.
Indeed, the function $\lambda \mapsto\bx(\lambda)$ constructed there was built as a polynomial in the Chebyshev basis, hence it directly gives an element of $\ell^1_\omega(\R)$ (see Remark~\ref{rem:notation}). Similarly, $\lambda \mapsto A(\lambda)$ constructed in Section~\ref{sec:sqrt_C0} belongs to $\ell^1_\omega(\R)$ by construction, so that we then take $\cA = \cM(A)$.
That the approximate inverse $\cA$ is defined as a multiplication operator is to be expected since $D\cF(\bx)$ itself is a multiplication operator on $\ell^1_\omega(\R)$ acting as $[D\cF(\bx)]h = 3\bx^{\ast 2} \ast h$.

Then, we are left with deriving $Y, Z_1, Z_2$ for our choice of $\cF$, $\bx$ and $\cA$.
We have
\begin{align*}
\left\| \cA \cF(\bx)\right\|_{\ell^1_\omega(\R)} &= \left\| \cM(A) (\bx^{*3} - e_1 - 2e_0) \right\|_{\ell^1_\omega(\R)} = \left\| A * (\bx^{*3} - e_1 - 2e_0) \right\|_{\ell^1_\omega(\R)}, \\
\left\| I - \cA D\cF(\bx)\right\|_{\B(\ell^1_\omega(\R),\ell^1_\omega(\R))} &= \left\| I - 3\cM(A) \cM({\bx^{*2}})\right\|_{\B(\ell^1_\omega(\R),\ell^1_\omega(\R))} = \left\| e_0-3A * \bx^{*2} \right\|_{\ell^1_\omega(\R)},
\end{align*}
where we used~\eqref{eq:opnormmult} to replace the operator norm by the $\ell^1_\omega$-norm. Moreover, for any $\rstar>0$ and all $x\in \cB_{\rstar}(\bx)$,
\begin{align*}
\left\| \cA \left( D\cF(x) - D\cF(\bx)\right)\right\|&_{\B(\ell^1_\omega(\R),\ell^1_\omega(\R))} \\
&\le  \left\| \cM(A) \right\|_{\B(\ell^1_\omega(\R),\ell^1_\omega(\R))} \left( \sup_{x\in\cB_{\rstar}(\bx)}\left\| D^2\cF(x)\right\|_{\cBL(\ell^1_\omega(\R),\ell^1_\omega(\R))} \right)\left\| x-\bx \right\|_{\ell^1_\omega(\R)} \\ 
&\le 6 \left\| A  \right\|_{\ell^1_\omega(\R)} \left(\left\| \bx  \right\|_{\ell^1_\omega(\R)} + \rstar\right)\left\| x-\bx \right\|_{\ell^1_\omega(\R)},
\end{align*}
where $\cB_{\rstar}(\bx)$ denotes the closed ball centered at $\bx$ with radius $\rstar$ in $\ell^1_\omega(\cX)$, and $\cBL(\ell^1_\omega(\R),\ell^1_\omega(\R))$ is the space of bounded bilinear operators from $\ell^1_\omega(\R)\times \ell^1_\omega(\R)$ into $\ell^1_\omega(\R)$.
Therefore, we can set the bounds $Y$, $Z_1$ and $Z_2$ identical to those in~\eqref{eq:cubic_bounds_C0}.
\begin{note}[Injectivity of $\cA$]
Once more, if $Z_1<1$, then $\cA$ is automatically injective. Indeed, we then have that $A(\lambda)\neq 0$ for all $\lambda$, which, by Wiener's $1/f$ Theorem, is equivalent to $A$ being invertible in $\ell^1_\omega(\R)$, and thus $\cA = \cM(A)$ is also invertible in $\B(\ell^1_\omega(\R),\ell^1_\omega(\R))$.
\end{note}

The important difference with Section~\ref{sec:sqrt_C0} is that, if the conditions~\eqref{eq:condr} hold, then Theorem~\ref{th:NK} (applied to the map $\cF$ on $\ell^1_\omega(\R)$) yields the existence of a solution curve $x^\star$ in $\ell^1_\omega(\R)$ near $\bx$:
\begin{equation}\label{eq:err_ell1}
\left\| x^\star - \bx \right\|_{\ell^1_\omega(\R)} \le r.
\end{equation}
The above is a stronger control than the one obtained in~\eqref{eq:err_C0}.
In particular, if the weight sequence $\omega$ was taken of the form~\eqref{eq:analytic_weights} with $\mu>1$ (resp. of the form~\eqref{eq:Ck_weights} with $k\ge 2$), then~\eqref{eq:err_ell1} combined with Lemma~\ref{lem:derivatives_analytic} (resp. Lemma~\ref{lem:derivatives_Ck}) immediately implies an explicit error bound on $\frac{\mathrm{d}}{\mathrm{d}\lambda}(x^\star - \bx)$.

\begin{figure}[ht]
  \begin{center}
    \includegraphics{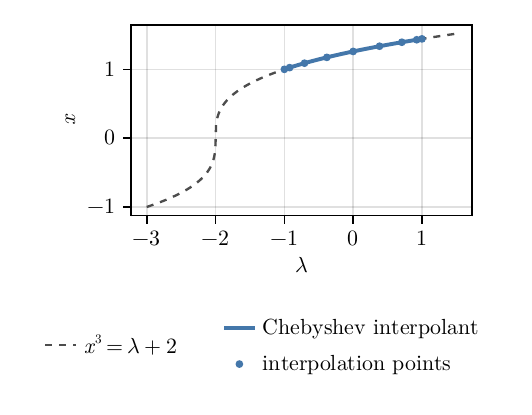}
  \end{center}
  \caption{A piece of the cubic root branch (in blue) rigorously validated in Section~\ref{sec:sqrt_ell1}.}
  \label{fig:cbrt}
\end{figure}

\paragraph{Results of the code.}
For $N=8$, the execution of the code in \cite{CODE} produces an approximate solution $\bx$ (a polynomial of degree $8$ written in the Chebyshev basis, and represented on Figure~\ref{fig:cbrt}), and, using the constant weights $\omega_n=1$ for all $n\in\N$, certifies the following upper bounds
\begin{align*}
\|\mathcal{A} \mathcal{F}(\bx)\|_{\ell^1_\omega(\R)} &\le  6.03613 \times 10^{-7} , \\
\|\mathcal{I} - \mathcal{A} D \mathcal{F}(\bar{x}) \|_{\ell^1_\omega(\R)} &\le 1.03029 \times 10^{-5}, \\
\sup_{x \in \cB_{\rstar}(\bx)}\left\| \cA \left( D\cF(x) - D\cF(\bx)\right)\right\|_{\ell^1_\omega(\R)} &\le 2.96415, \qquad \text{with } \rstar = 10^{-6},
\end{align*}
which yield a valid error bound for the entire branch of $r = 6.0362 \times 10^{-7}$.

\subsubsection{First conclusions}
\label{sec:take_home_message}

The main observation to make is how closely the bounds obtained in~\eqref{eq:cubic_bounds_C0}, used for validating the entire branch, resemble the ones initially derived in~\eqref{eq:cubic_bounds_circ}, used for validating a solution at a fixed parameter value.
Loosely speaking,
\begin{center}
\em
``The uniform estimates \eqref{eq:cubic_bounds_C0} are identical to the punctual ones \eqref{eq:cubic_bounds_circ}, \\up to a change on the product and the norm.''
\end{center}
This remark highlights an important and elegant feature of our method: the passage to uniform control involves only \emph{structural adjustments, rather than fundamentally new estimates}.
The example~\eqref{eq:cubic} was admittedly very simple, and obtaining the bounds in this case was straightforward.
Dealing with non-polynomial nonlinearities would have already required more care.
Though, we show in Section~\ref{sec:punct2unif} that this philosophy remains valid in a broader context.

Another interesting aspect is that we used exactly the same bounds $Y, Z_1, Z_2$ to apply Theorem~\ref{th:NKpara} in Section~\ref{sec:sqrt_C0} as we did to apply Theorem~\ref{th:NK}, with $\mathcal{F}$ on $\ell^1_\omega(\cX)$, in Section~\ref{sec:sqrt_ell1}.
In Section~\ref{sec:ell12C0} we show that, in general, the $\ell^1_\omega$ estimates always imply a uniform contraction, with significant implications.

\subsection{General framework}
\label{sec:framework}

Building up on the ideas and comments made on the previous example, we present our rigorous continuation setup for general Banach spaces $\cX, \cY$ and $F : \cX \times [-1,1] \to \cY$ a $C^0$ map, continuously differentiable with respect to the first variable.

We consider the continuation problem~\eqref{eq:Fxl}, that is $F(x,\lambda)=0$.
To generalize the approach followed in Section~\ref{sec:sqrt_ell1}, i.e., to recast the continuation problem on $\ell^1_\omega(\cX)$, we recall the notion of \emph{superposition operators}.

\begin{definition}\label{def:extension}
Let $\cX, \cY$ be Banach spaces.
We say that a map $\cF:\ell^1_\omega(\cX)\to\ell^1_\omega(\cY)$ is a superposition operator if there exists $F : \cX\times [-1,1] \to \cY$ such that
\begin{equation}
[\cF(x)](\lambda) = F(x(\lambda),\lambda),
\end{equation}
for all $x\in\ell^1_\omega(\cX)$ and $\lambda\in [-1,1]$.
\end{definition}

Typical continuation problems can naturally be formulated by superposition operators.
For instance, this is true for any algebraic equation, such as the cubic root example from Section~\ref{sec:cubic_root}.
On the contrary, this definition would not hold, e.g., if $F$ involves a derivative, or an integral, with respect to the parameter $\lambda$.

\begin{remark}
Consider Banach spaces $\cX$ and $\cY$.
Any element $A \in \ell^1_\omega(\B(\cX,\cY))$ gives rise to a map
\begin{equation}\label{eq:mult_sup1}
\left\{
\begin{aligned}
\cX \times [-1,1] &\longrightarrow \cY \\
(x,\lambda) &\longmapsto A(\lambda)x.
\end{aligned}
\right.
\end{equation}
The multiplication operator $\cA\bydef \cM(A)\in\B(\ell^1_\omega(\cX),\ell^1_\omega(\cY))$ introduced in Section~\ref{sec:basic_stuff_Cheb} is then the superposition operator associated to the above map.
We may simply write that $\cA$ is the superposition operator associated to $A$.


\end{remark}

As illustrated in Section~\ref{sec:sqrt_ell1}, the superposition operator is in fact the main object we work on. We show below that some properties of a superposition operator must automatically be true for the underlying map $F$.

\begin{lemma}\label{lem:prop_superposition}
Let $\cF : \ell^1_\omega(\cX) \to \ell^1_\omega(\cY)$ be the superposition operator associated with $F : \cX \times [-1,1] \to \cY$.
The following holds:
\begin{enumerate}
\item If $\cF$ is Lipchitz continuous with Lipschitz constant $\kappa \ge 0$, then $F$ is Lipschitz continuous with respect to the first variable, and its Lipschitz constant is bounded above by $\kappa$.
\item For any $x \in \ell^1_\omega(\cX)$, if the Fr\'echet derivative $D\cF(x)$ exists, then $D_x F(x(\lambda), \lambda)$ exists for all $\lambda \in [-1,1]$.
Moreover, $D\cF(x)$ is then a superposition operator such that $D\cF(x) = \cM(D_x F(x(\cdot), \cdot) )$, that is to say
\[
[D\cF(x)\psi](\lambda) = [D_x F(x(\lambda),\lambda)]\psi(\lambda),
\]
for all $\psi \in\ell^1_\omega(\cX)$ and $\lambda\in [-1,1]$.
\end{enumerate}
\end{lemma}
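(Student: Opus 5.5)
The plan is to test the superposition operator on constant functions of $\lambda$ and then localize in $\lambda$ by evaluation. For $y\in\cX$, let $\hat y \bydef y\, e_0\in\ell^1_\omega(\cX)$ be the constant sequence $(y,0,0,\dots)$. Its norm is $\|\hat y\|_{\ell^1_\omega(\cX)} = \omega_0\|y\|_\cX$, and $\hat y(\lambda)=y$ for all $\lambda$. Evaluation at a fixed $\lambda$ is the bounded linear map $\mathrm{ev}_\lambda:\ell^1_\omega(\cY)\to\cY$. Since $|T_n(\lambda)|\le 1$ and $\omega_n\ge 1$ for $n\ge1$, its norm is controlled as in~\eqref{eq:C0vsell1}: the $n=0$ term contributes $\|\psi_0\|$, which is at most $\|\psi_0\|\omega_0$ provided $\omega_0\ge 1$. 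Note that \ref{A2} with $k=n=0$ gives $\omega_0\ge1$ automatically, because $\omega_0\le\omega_0^2$.

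\textbf{Part 1.} Take $y,z\in\cX$ and $\lambda\in[-1,1]$. Then
\[
\|F(y,\lambda)-F(z,\lambda)\|_\cY=\|[\cF(\hat y)-\cF(\hat z)](\lambda)\|_\cY\le\|\cF(\hat y)-\cF(\hat z)\|_{\ell^1_\omega(\cY)}\le\kappa\,\omega_0\|y-z\|_\cX .
\]
This gives the constant $\kappa\omega_0$. To remove the factor $\omega_0$, I expect to need the normalization $\omega_0=1$ implicit in~\eqref{eq:C0vsell1}. The cleaner alternative is to note that the sup estimate and the norm of $\hat y$ use the same weight, so evaluation satisfies $\|\psi(\lambda)\|\le\|\psi\|/\min_n\omega_n$ with $\min_n \omega_n=\omega_0$ when $\omega_0\le 1$. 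In any case I will argue with the effective normalization $\omega_0=1$, consistent with~\eqref{eq:C0vsell1} as used in the paper, which gives exactly $\kappa$. This normalization bookkeeping is the only delicate point of Part 1.

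\textbf{Part 2.} Fix $x\in\ell^1_\omega(\cX)$ with $D\cF(x)$ existing, and fix $\lambda$. Define the candidate $L_\lambda\in\B(\cX,\cY)$ by $L_\lambda y\bydef[D\cF(x)\hat y](\lambda)$. It is linear and bounded, since evaluation and $y\mapsto\hat y$ are both bounded. For $h\in\cX$,
\[
F(x(\lambda)+h,\lambda)-F(x(\lambda),\lambda)-L_\lambda h=\big[\cF(x+\hat h)-\cF(x)-D\cF(x)\hat h\big](\lambda),
\]
because $(x+\hat h)(\lambda)=x(\lambda)+h$. The norm of the right side is bounded by $\|\cF(x+\hat h)-\cF(x)-D\cF(x)\hat h\|_{\ell^1_\omega(\cY)}=o(\|\hat h\|)=o(\|h\|)$. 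Hence $D_xF(x(\lambda),\lambda)$ exists and equals $L_\lambda$.

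It remains to identify $D\cF(x)\psi$ for a general $\psi$, and this is the main obstacle: we have only characterized the action on constants. I would first differentiate along the directions $\psi=y\,e_n$ (coefficient $y$ in slot $n$), i.e.\ the functions $y\,T_n(\lambda)$. For this I consider the perturbation $x+t\,y T_n$ and compare it pointwise with $F(x(\lambda)+tyT_n(\lambda),\lambda)$. The Fréchet derivative of $\cF$ along this direction, evaluated at $\lambda$, equals $\frac{d}{dt}\big|_{t=0}F(x(\lambda)+t T_n(\lambda)y,\lambda)=T_n(\lambda)\,D_xF(x(\lambda),\lambda)y$ by the pointwise derivative just established, using the chain rule in the scalar $t$. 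So $[D\cF(x)(yT_n)](\lambda)=D_xF(x(\lambda),\lambda)\,y\,T_n(\lambda)$. By linearity this holds on all finitely supported $\psi$, i.e.\ polynomials. Both sides are continuous in $\psi\in\ell^1_\omega(\cX)$ for fixed $\lambda$: the left side since $D\cF(x)$ and evaluation are bounded, the right side since $\psi\mapsto\psi(\lambda)$ is bounded and $D_xF(x(\lambda),\lambda)$ is a bounded operator. Polynomials are dense in $\ell^1_\omega(\cX)$ (truncations converge), so the identity extends to all $\psi$.

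Finally, $\lambda\mapsto D_xF(x(\lambda),\lambda)$ acts as the superposition operator $D\cF(x)$, which is the asserted $D\cF(x)=\cM(D_xF(x(\cdot),\cdot))$.
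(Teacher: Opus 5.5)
Your proof is correct and is essentially the paper's argument. Constant test functions $c_\psi$ give Part~1 and the pointwise Fr\'echet derivative, and the identity $[D\cF(x)\psi](\lambda)=D_xF(x(\lambda),\lambda)\psi(\lambda)$ follows by passing the G\^ateaux limit through evaluation at $\lambda$. The paper likewise writes $\|c_\psi\|_{\ell^1_\omega(\cX)}=\|\psi\|_\cX$, which tacitly takes $\omega_0=1$. If you want to drop that normalization, compare $c_z$ with $c_z+(y-z)\,T_m/T_m(\lambda)$ for $m$ nearly attaining $\sup_n|T_n(\lambda)|/\omega_n$, which is the norm of evaluation at $\lambda$; this gives exactly $\kappa$ for any admissible weight.

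The only superfluous step is your basis-direction/linearity/density extension in Part~2. The directional-derivative computation you do for $yT_n$ works verbatim for any $\psi\in\ell^1_\omega(\cX)$, since $(x+t\psi)(\lambda)=x(\lambda)+t\psi(\lambda)$ and $D_xF(x(\lambda),\lambda)$ is already known to exist; this one-line argument is exactly how the paper concludes.
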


\begin{proof}
For any $\psi \in \cX$, denote the constant function $c_\psi \in \ell^1_\omega(\cX)$ given by $c_\psi(\lambda) = \psi$ for all $\lambda \in [-1,1]$.
Note that $\|c_\psi\|_{\ell^1_\omega(\cX)} = \|\psi\|_\cX$.

If $\cF$ is Lipschitz continuous, then, for any $\phi, \psi \in \cX$ and $\lambda \in [-1,1]$, we have that
\[
\|F(\phi, \lambda) - F(\psi, \lambda)\|_\cY
= \|\cF(c_\phi) - \cF(c_\psi)\|_{\ell^1_\omega(\cY)}
\le \kappa \| c_\phi - c_\psi \|_{\ell^1_\omega(\cX)}
= \kappa \|c_{\phi - \psi}\|_{\ell^1_\omega(\cX)}
= \kappa \|\phi - \psi\|_\cX.
\]
Next, for any $x\in\ell^1_\omega(\cX)$, $\psi \in \cX$ and $\lambda \in [-1,1]$, according to~\eqref{eq:C0vsell1} we get
\begin{align*}
\| F(x(\lambda) + \psi, \lambda) - F(x(\lambda), \lambda) - [D\cF(x)c_\psi](\lambda)\|_\cY
&\leq \| \cF(x + c_\psi) - \cF(x) - D\cF(x)c_\psi\|_{\ell^1_\omega(\cY)} \\
&= o(\|c_\psi\|_{\ell^1_\omega(\cX)}) \\
&= o(\|\psi\|_\cX),
\end{align*}
which proves that $F$ is Fr\'echet differentiable with respect to $x$ at $(x(\lambda), \lambda)$ for all $\lambda \in [-1,1]$, and that $D_x F(x(\lambda), \lambda)\psi = [D\cF(x)c_\psi](\lambda)$.
Lastly, taking now $\psi \in \ell^1_\omega(\cX)$, we readily check that
\begin{align*}
    [D\cF(x)\psi](\lambda) &= \lim_{h\to 0} \frac{[\cF(x+h\psi)](\lambda)-[\cF(x)](\lambda)}{h}\\
    &= \lim_{h\to 0} \frac{F(x(\lambda)+h\psi(\lambda),\lambda)-F(x(\lambda),\lambda)}{h}\\
    &= D_x F(x(\lambda), \lambda)\psi(\lambda). \qedhere
\end{align*}
\end{proof}

\subsubsection{Contraction in $\ell^1_\omega(\cX)$ and consequences}
\label{sec:ell12C0}

The implication of Lemma~\ref{lem:prop_superposition} is that the bounds $Y, Z_1, Z_2$ from Theorem~\ref{th:NK} estimated in $\ell^1_\omega(\cX)$, for $\cF$, $\bx$ and $\cA = \cM(A)$, also hold in $\cX$, for $F(\cdot, \lambda)$, $\bx(\lambda)$ and $A(\lambda)$ uniformly in $\lambda \in [-1,1]$.
In other words, rigorous continuation in $\ell^1_\omega(\cX)$ implies rigorous continuation in $C^0([-1,1],\cX)$.
A consequence of this fact is that no bifurcation can occur along the proven solution curve.

\begin{theorem}\label{th:nobif}
Let $\cX,\cY$ be Banach spaces, $\rstar \in (0, \infty]$, $\bx\in\ell^1_\omega(\cX)$, $\cF:\ell^1_\omega(\cX)\to\ell^1_\omega(\cY)$ a $C^1$ superposition operator associated with $F : \cX \times [-1,1] \to \cY$, and $\cA = \cM(A) : \ell^1_\omega(\cY) \to \ell^1_\omega(\cX)$ a linear operator associated with $A : [-1,1] \to \B(\cY, \cX)$, where $A(\lambda)$ is injective for all $\lambda\in[-1,1]$.
Assume that there exist positive constants $Y, Z_1, Z_2 = Z_2(\rstar)$ satisfying
\begin{subequations}\label{eq:ell_bounds}
\begin{align}
\| \cA\cF(\bx) \|_{\ell^1_\omega(\cX)} &\le Y, \\
\| \mathcal{I} - \cA D\cF(\bx) \|_{\B(\ell^1_\omega(\cX),\ell^1_\omega(\cX))} &\le Z_1, \\
\| \cA(D\cF(x) - D\cF(\bx)) \|_{\B(\ell^1_\omega(\cX),\ell^1_\omega(\cX))} &\le Z_2 \| x-\bx \|_{\ell^1_\omega(\cX)}, \qquad \text{for all } x \in \cB_{\rstar}(\bx).
\end{align}
\end{subequations}
Here, $\mathcal{I}$ denotes the identity on $\ell^1_\omega(\cX)$.
If there exists $r \in (0,\rstar]$ such that the conditions~\eqref{eq:condr} are met for the bounds $Y, Z_1, Z_2$ herein, then there exists a unique zero $x^\star$ of $\cF$ in $\cB_r(\bx)$.
Moreover, for all $\lambda \in [-1,1]$, $x^\star(\lambda)$ is the unique zero of $F(\,\cdot\,,\lambda)$ in $B_r(\bx(\lambda))$.
In particular, there is no other branch of solutions of $F$ bifurcating from $x^\star$ in $\cX$.
\end{theorem}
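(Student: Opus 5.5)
The first assertion is a direct application of Theorem~\ref{th:NK} to $\cF$ on the Banach spaces $\ell^1_\omega(\cX)$ and $\ell^1_\omega(\cY)$. The only hypothesis to check beyond~\eqref{eq:ell_bounds} and~\eqref{eq:condr} is injectivity of $\cA=\cM(A)$. If $\cA\psi=0$, then $A(\lambda)\psi(\lambda)=0$ for all $\lambda$. Since each $A(\lambda)$ is injective, $\psi(\lambda)=0$ for all $\lambda$, and the Chebyshev series $\psi$ vanishes by uniqueness of coefficients. Theorem~\ref{th:NK} then gives a unique zero $x^\star\in\cB_r(\bx)$. By~\eqref{eq:C0vsell1}, $\|x^\star(\lambda)-\bx(\lambda)\|_\cX\le r$, and $F(x^\star(\lambda),\lambda)=[\cF(x^\star)](\lambda)=0$ for every $\lambda$.

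For the pointwise uniqueness, I will show that the $\ell^1_\omega$ bounds descend to the pointwise bounds~\eqref{eq:bounds_cont} with the same constants, and then apply Theorem~\ref{th:NK} (or Theorem~\ref{th:NKpara}) at each fixed $\lambda$.

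\textbf{Bound $Y$.} This one is immediate from~\eqref{eq:C0vsell1}: $\|A(\lambda)F(\bx(\lambda),\lambda)\|_\cX\le\|\cA\cF(\bx)\|_{\ell^1_\omega(\cX)}\le Y$.

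\textbf{Bound $Z_1$.} Here I use the constant-function trick from the proof of Lemma~\ref{lem:prop_superposition}. For $\psi\in\cX$, let $c_\psi$ be the constant function equal to $\psi$. Lemma~\ref{lem:prop_superposition} gives $[D\cF(\bx)c_\psi](\lambda)=D_xF(\bx(\lambda),\lambda)\psi$. Hence
\[
\|(I-A(\lambda)D_xF(\bx(\lambda),\lambda))\psi\|_\cX\le\|(\mathcal I-\cA D\cF(\bx))c_\psi\|_{\ell^1_\omega(\cX)}\le Z_1\|\psi\|_\cX .
\]

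\textbf{Bound $Z_2$.} This is the step requiring care. Given $x\in B_{\rstar}(\bx(\lambda_0))$ at a fixed $\lambda_0$, I need an element of $\cB_{\rstar}(\bx)$ that takes the value $x$ at $\lambda_0$ and whose distance to $\bx$ equals $\|x-\bx(\lambda_0)\|_\cX$. The natural candidate is $\tilde x=\bx+c_{x-\bx(\lambda_0)}$. Its $\ell^1_\omega$-distance to $\bx$ is exactly $\|x-\bx(\lambda_0)\|_\cX\le\rstar$, and $\tilde x(\lambda_0)=x$. Applying the $Z_2$ bound to $\tilde x$, testing on $c_\psi$, and evaluating at $\lambda_0$ via~\eqref{eq:C0vsell1} and Lemma~\ref{lem:prop_superposition} gives
\[
\|A(\lambda_0)(D_xF(x,\lambda_0)-D_xF(\bx(\lambda_0),\lambda_0))\psi\|_\cX\le Z_2\|x-\bx(\lambda_0)\|_\cX\|\psi\|_\cX .
\]

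With~\eqref{eq:bounds_cont} established and~\eqref{eq:condr} assumed, Theorem~\ref{th:NK} applied to $F(\cdot,\lambda)$ with $A(\lambda)$ injective yields a unique zero of $F(\cdot,\lambda)$ in $B_r(\bx(\lambda))$. Since $x^\star(\lambda)$ is a zero lying in this ball, it is that unique zero.

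The no-bifurcation claim follows. Any other branch $\lambda\mapsto y(\lambda)$ of zeros emanating from $x^\star(\lambda_0)$ would, by continuity, satisfy $\|y(\lambda)-\bx(\lambda)\|_\cX<r$ near $\lambda_0$, because $\|x^\star(\lambda)-\bx(\lambda)\|_\cX\le\|x^\star-\bx\|_{\ell^1_\omega}<r$ (strict inequality holds by~\eqref{eq:condr1NK}, which places the fixed point strictly inside the ball). Pointwise uniqueness then forces $y(\lambda)=x^\star(\lambda)$.

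The main obstacle is the $Z_2$ transfer. A pointwise $x$ must be lifted to a global perturbation staying in $\cB_{\rstar}(\bx)$, and the constant shift $c_{x-\bx(\lambda_0)}$ is what makes the norms match exactly.
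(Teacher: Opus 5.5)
Your proof is correct and follows the paper's route. You apply Theorem~\ref{th:NK} to $\cF$ on $\ell^1_\omega(\cX)$, transfer the $Y$, $Z_1$, $Z_2$ bounds to each fixed $\lambda$ using~\eqref{eq:C0vsell1}, constant functions $c_\psi$ and Lemma~\ref{lem:prop_superposition}, and then reapply Theorem~\ref{th:NK} to $F(\cdot,\lambda)$. Your explicit lift $\bx + c_{x-\bx(\lambda_0)}$ in the $Z_2$ step, and your strict-inequality argument for the no-bifurcation claim, make precise two steps the paper states only tersely (for $Z_2$ it simply cites Point~1 of the lemma).
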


\begin{proof}
First note that, since $A(\lambda)$ is injective for all $\lambda\in[-1,1]$, then $\cA$ must also be injective. 
The existence of a unique $x^\star\in\cB_r(\bx)$ such that $\cF(x^\star)=0$ is then given by Theorem~\ref{th:NK} applied to $\cF:\ell^1_\omega(\cX)\to\ell^1_\omega(\cY)$ with approximate solution $\bx\in\ell^1_\omega(\cX)$ and approximate inverse $\cA :\ell^1_\omega(\cY) \to \ell^1_\omega(\cX)$.

The remaining of the proof shows that the bounds $Y, Z_1, Z_2$ given in~\eqref{eq:ell_bounds} verify the contraction conditions uniformly in $\lambda$, as required by Theorem~\ref{th:NKpara}.

Since $\cF$ and $\cA$ are superposition operators associated with $F$ and $A$ respectively, we get, for all $\lambda\in[-1,1]$,
\[
A(\lambda)F(\bx(\lambda),\lambda) = [\cA\cF(\bx)](\lambda).
\]
Hence, from~\eqref{eq:C0vsell1} it follows that
\[
\| A(\lambda)F(\bx(\lambda),\lambda) \|_\cX
\le \| \cA\cF(\bx) \|_{\ell^1_\omega(\cX)}.
\]

From Point 2 of Lemma~\ref{lem:prop_superposition}, for all $\lambda\in[-1,1]$ and $\psi\in\ell^1_\omega(\cX)$, we have
\[
(I - A(\lambda)D_xF(\bx(\lambda),\lambda))\psi(\lambda) = [(\mathcal{I} - \cA D\cF(\bx))\psi ](\lambda).
\]
For any $\psi \in \cX$, denote by $c_\psi \in \ell^1_\omega(\cX)$ the constant function given by $c_\psi(\lambda) = \psi$ for all $\lambda \in [-1, 1]$.
Using once more~\eqref{eq:C0vsell1}, we obtain
\begin{align*}
\| (I - A(\lambda)D_xF(\bx(\lambda),\lambda)) \psi \|_\cX
&= \| [(\mathcal{I} - \cA D\cF(\bx)) c_\psi](\lambda) \|_\cX \\
&\le \| (\mathcal{I} - \cA D\cF(\bx)) c_\psi \|_{\ell^1_\omega(\cX)} \\
&\le \| \mathcal{I} - \cA D\cF(\bx) \|_{\B(\ell^1_\omega(\cX),\ell^1_\omega(\cX))} \| c_\psi \|_{\ell^1_\omega(\cX)} \\
&= \| \mathcal{I} - \cA D\cF(\bx) \|_{\B(\ell^1_\omega(\cX),\ell^1_\omega(\cX))} \| \psi \|_\cX.
\end{align*}
which proves that
\[
\| I - A(\lambda)D_x F(\bx(\lambda),\lambda) \|_{\B(\cX,\cX)}
\le \| \mathcal{I} - \cA D\cF(\bx) \|_{\B(\ell^1_\omega(\cX),\ell^1_\omega(\cX))}.
\]

Finally, from Point 1 of Lemma~\ref{lem:prop_superposition}, we have readily that, for all $\lambda \in [-1,1]$,
\[
\sup_{\substack{x\in B_{\rstar}(\bx(\lambda)) \\ x \ne \bx(\lambda) }} \frac{\| A(\lambda)(D_xF(x,\lambda)- D_xF(\bx(\lambda),\lambda))\|_{\B(\cX,\cX)}}{\| x - \bx(\lambda)\|_\cX} \le \sup_{\substack{x\in\cB_{\rstar}(\bx) \\ x \ne \bx}} \frac{\| \cA(D\cF(x)- D\cF(\bx))\|_{\B(\ell^1_\omega(\cX),\ell^1_\omega(\cX))}}{\| x - \bx\|_{\ell^1_\omega(\cX)}}.
\]

Therefore, for any $\lambda\in[-1,1]$, we can also apply Theorem~\ref{th:NK} to $F(\cdot,\lambda) : \cX \to \cY$ with approximate solution $\bx(\lambda) \in \cX$ and approximate inverse $A(\lambda) \in \B(\cY, \cX)$.
Thus, there is a unique zero of $F(\cdot,\lambda)$ in $B_r(\bx(\lambda))$, which must coincide with $x^\star(\lambda)$.
\end{proof}

\begin{note}[Injectivity of $\cA$]
\label{rem:Ainj}
We have seen in the simple example of Section~\ref{sec:cubic_root} that the
injectivity of $A(\lambda)$ for each $\lambda$ can be obtained as a by-product of the bound $Z_1<1$ (implied by~\eqref{eq:condr2NK}) which makes $A(\lambda)D_xF(\bx(\lambda),\lambda)$ invertible for each $\lambda$.
When $\cX$ is infinite-dimensional, this a priori only yields the surjectivity of $A(\lambda)$ (and the injectivity of $D_xF(\bx(\lambda),\lambda)$); however, in
many examples of interest $A(\lambda)$ and $D_xF(\bx(\lambda),\lambda)$ are Fredholm operators of index $0$, in which case~\eqref{eq:condr2NK}
does yield the injectivity of $A(\lambda)$.

Therefore in practice to apply Theorem~\ref{th:NK} for $\cF$ and conduct rigorous continuation in $\ell^1_\omega(\cX)$ (assuming that surjectivity of $A(\lambda)$ contrives its injectivity), the condition $Z_1 < 1$ (implied by~\eqref{eq:condr2NK}) yields that $A(\lambda)$ is injective and in turn that $\cA = \cM(A)$ is itself injective as required.
\end{note}

\begin{remark}
The converse implication fails.
The injectivity of $\cM(A)$ does not imply that of every $A(\lambda)$.
For instance, if $A(\lambda)=\lambda I$, then any $\bu\in\ell^1_\omega(\cX)$ with $\cM(A)\bu=0$ satisfies $\lambda \bu(\lambda)=0$ for all $\lambda$, and hence $\bu=0$ by continuity, so $\cM(A)$ is injective even though $A(0)=0$.
\end{remark}

We draw the reader's attention to the fact that a failure to verify Theorem \ref{th:nobif}, i.e., the inability to find an $r$ satisfying the inequalities~\eqref{eq:condr}, may (or not!) signal a bifurcation.
If a bifurcation is indeed present, alternative strategies must be employed.
For example, one could attempt working in a smaller subspace enforcing symmetries of the solution, or resorts to other techniques such as blow-up methods \cite{AriGazKoc21,BerLesQue21,CalGarHenLesMir24}, or introduce extended zero-finding problems allowing to directly study bifurcation points~\cite{LesSanWan17,RizSanWan24}.

\subsubsection{Practical considerations}
\label{sec:punct2unif}

We have seen that a powerful framework to solve $F(x, \lambda) = 0$ on $\cX\times\Lambda$ is to formulate the problem as $\cF(x) = 0$ on $\ell^1_\omega(\cX)$ and compute the bounds in~\eqref{eq:ell_bounds}.
The philosophy of our strategy is that if one knows how to solve the problem $F(x, \lambda)= 0$ at fixed parameter value $\lambda$, then one can readily solve the continuation problem, as already illustrated in Section~\ref{sec:cubic_root} when comparing~\eqref{eq:cubic_bounds_circ} and~\eqref{eq:cubic_bounds_C0}.

The proximity between the pointwise estimates and the continuation estimates can also be made apparent in the following more general context.
For fixed parameters, many computer-assisted proofs in dynamical systems are already done using some weighted $\ell^1$ spaces for $\cX$.
Indeed, this was already case in the proof of the Feigenbaum conjecture~\cite{Lan82}, in which the fixed-point of the period-doubling renormalization operator was proven to exist as a Taylor series with prescribed geometrical decay.
These weighted $\ell^1$ sequence spaces can be used to represent a variety of objects, such as periodic solutions via Fourier series~\cite{AriKocTer05,HunLesMir16,BerBreLesVee21}, local invariant manifolds via Taylor series~\cite{Mir17,MirRei19,HenLesMir22}, or solutions to boundary value problems using Chebyshev series~\cite{LesRei14,BerShe21}.
All these representations can also be combined, for instance to study and validate connecting orbits~\cite{BerHenLes23,MirMur25,BerDucLes25}.

In the remainder of this section, we suppose for simplicity that
\begin{align*}
\cX &= \left\{ \psi \in \R^\Z \, : \, \|\psi\|_\cX = \sum_{k \in \Z} | \psi_k| <\infty  \right\}, 
\end{align*}
but the discussion below applies to any context in which $\cX$ can be identified with a weighted $\ell^1$ space with a Banach algebra structure.
As already mentioned, a convenient way to describe a one-parameter family of elements of $\cX$ is to use the space $\ell^1_\omega(\cX)$, but another natural viewpoint could be to use
\begin{align*}
\cX(\ell^1_\omega(\R)) &= \left\{ \psi \in (\ell^1_\omega(\R))^\Z \, : \, \|\psi\|_{\cX(\ell^1_\omega(\R))} = \sum_{k \in \Z} \| \psi_k \|_{\ell^1_\omega(\R)} <\infty  \right\},
\end{align*}
that is, to see each coefficient of an element of $\cX$ as a function of $\lambda$, represented by an element of $\ell^1_\omega(\R)$. A key observation is that, since $\cX$ is itself an $\ell^1$ space, $\ell^1_\omega(\cX)$ is isometrically isomorphic to $\cX(\ell^1_\omega(\R))$. Indeed, simply using Fubini's theorem we get
\[
\| x \|_{\ell^1_\omega(\cX)}
= \sum_{n\in\Z} \left\| x_{| n|}\right\|_{\cX} \omega_{| n|}
= \sum_{n\in\Z} \sum_{k\in\Z} | (x_{|n|})_k | \omega_{|n|}
= \sum_{k\in\Z}\sum_{n\in\Z} | (x_{|n|})_k | \omega_{|n|}
= \| x \|_{\cX(\ell^1_\omega(\R))}.
\]

%
Making this identification explicit is quite telling about how these objects relate to the case of a fixed parameter $\lambda$.
\begin{enumerate}
\item For any $\psi \in \ell^1_\omega(\cX)$, $\lambda \mapsto \psi(\lambda)$ is an (infinite-)vector valued function.
Considering its representation $\widehat{\psi} \in \cX(\ell^1_\omega(\R))$ given component-wise by $\widehat{\psi}_k (\lambda) = \big( \psi(\lambda) \big)_k$, then each entry of $\widehat{\psi}$ is a real valued function of $\lambda$, represented by an element of $\ell^1_\omega(\R)$.
Whence,
\[
\widehat{\psi} = \begin{pmatrix}
\vdots \\
\widehat{\psi}_k (\,\cdot\,) \\
\vdots
\end{pmatrix}, \qquad
\|\psi\|_{\ell^1_\omega(\cX)} = \|\widehat{\psi}\|_{\cX(\ell^1_\omega(\R))}
= \left\| \begin{pmatrix} \vdots \\ \| \widehat{\psi}_k(\,\cdot\,) \|_{\ell^1_\omega(\R)} \\ \vdots \end{pmatrix} \right\|_\cX.
\]
In particular, by going via $\|\widehat{\psi}\|_{\cX(\ell^1_\omega(\R))}$, we see that $\|\psi\|_{\ell^1_\omega(\cX)}$ can in fact be expressed very similarly to the pointwise quantity $\|\psi(\lambda)\|_{\cX}$, that is
\[
\psi(\lambda) = \begin{pmatrix}
\vdots \\
\widehat{\psi}_k (\lambda) \\
\vdots
\end{pmatrix}, \qquad
\| \psi(\lambda) \|_\cX
= \left\| \begin{pmatrix} \vdots \\ | \widehat{\psi}_k(\lambda) | \\ \vdots \end{pmatrix} \right\|_\cX.
\]
%

\item Similarly, for any $A \in \ell^1_\omega(\B(\cX, \cX))$, $\lambda \mapsto A(\lambda)$
is an (infinite-)matrix valued function.
Equivalently, one can consider $\widehat{A}$ given component-wise by $\widehat{A}_{k,l}(\lambda) = \big(A(\lambda)\big)_{k,l}$, and each entry of $\widehat{A}$ is a real valued function of $\lambda$, represented by an element of $\ell^1_\omega(\R)$.
Whence, for  $\cA = \cM(A)$,
\begin{align}
\label{eq:normAgeneral}
\widehat{A} = \begin{pmatrix}
&\vdots \\
\cdots & \widehat{A}_{k,l}(\,\cdot\,) & \cdots \\
&\vdots
\end{pmatrix}, \qquad
\|\cA\|_{\B(\ell^1_\omega(\cX), \ell^1_\omega(\cX))}
&=
\left\| \begin{pmatrix} & \vdots & \\ \cdots & \| \widehat{A}_{k,l}(\,\cdot\,) \|_{\ell^1_\omega(\R)} & \cdots \\ & \vdots & \end{pmatrix} \right\|_{\B(\cX, \cX)}.
\end{align}
This is again very similar to the pointwise case, in which we have
\[
A(\lambda) = \begin{pmatrix}
&\vdots \\
\cdots & \widehat{A}_{k,l}(\lambda) & \cdots \\
&\vdots
\end{pmatrix}, \qquad
\| A(\lambda) \|_{\B(\cX, \cX)} = \left\| \begin{pmatrix} & \vdots & \\ \cdots & | \widehat{A}_{k,l}(\lambda) | & \cdots \\ & \vdots & \end{pmatrix} \right\|_{\B(\cX, \cX)}.
\]
In order to prove~\eqref{eq:normAgeneral}, let us consider an arbitrary $\psi\in\ell^1_\omega(\cX)$ and $\phi = \cA \psi$. For any $k\in\Z$, $\widehat{\phi}_k = \sum_{l\in\Z} \widehat{A}_{k,l} \ast \widehat{\psi}_l$, where $\ast$ denotes the discrete convolution product on $\ell^1_\omega(\R)$, and thus 
\begin{align*}
    \Vert \phi \Vert_{\ell^1_\omega(\cX)} &= \Vert \widehat\phi \Vert_{\cX(\ell^1_\omega)} \\
    &\leq \sum_{k\in\Z}\sum_{l\in\Z} \Vert\widehat{A}_{k,l}\Vert_{\ell^1_\omega(\R)}  \Vert\widehat{\psi}_l\Vert_{\ell^1_\omega(\R)} \\
    &\leq \left( \sup_{l\in \Z} \sum_{k\in\Z} \Vert\widehat{A}_{k,l} \Vert_{\ell^1_\omega(\R)} \right) \Vert\widehat\psi\Vert_{\cX(\ell^1_\omega)}.
\end{align*}
Therefore, $\|\cA\|_{\B(\ell^1_\omega(\cX), \ell^1_\omega(\cX))} \leq \sup_{l\in \Z} \sum_{k\in\Z} \Vert\widehat{A}_{k,l} \Vert_{\ell^1_\omega(\R)}$, which is exactly the right-hand side of~\eqref{eq:normAgeneral}. By picking $\psi$ such that each $\widehat\psi_l$ is equal to $0$ except one which is taken to be the neutral element of $\ell^1_\omega(\R)$ (i.e., the constant function equal to $1$), we see that the inequality must in fact be an equality.
\end{enumerate}

We can see in both cases above that manipulating $\widehat{\psi}$ and $\widehat{A}$ in practice is very natural, and we drop the symbol $\,\, \widehat{}\,\,$ in the forthcoming examples from Section~\ref{sec:sqrt_example} and Section~\ref{sec:applications}.

On top of computing norms, the estimates required for Theorem~\ref{th:NK} typically also involve nonlinear terms, and (infinite) matrix/vector products. Here as well, one can essentially reduce such operations in the continuation case to their pointwise counterpart. This also means that the code required for rigorously validating a whole branch can mostly reuse the code used for a pointwise proof. More precisely, assume $\phi\in\ell^1_\omega(\cX)$ and $\psi\in\ell^1_\omega(\cX)$ are finite Chebyshev series, i.e. that they are polynomials with value in $\cX$. Then, their product $\phi * \psi \in\ell^1_\omega(\cX)$ can readily be computed using the DCT and its inverse together with products on $\cX$. Indeed, denoting $N_1$ the degree of $\phi$ and $N_2$ the degree of $\psi$, one can simply evaluate both $\phi$ and $\psi$ on a Chebyshev grid with $N_1+N_2+1$ points via the inverse DCT, multiply component-wise the obtained values (which are elements of $\cX$), and interpolate back using the DCT in order to retrieve the Chebyshev coefficients of the product $\phi * \psi$.
Specifically,
\[
\phi * \psi = \DCT \left( \left\{ \DCT^{-1}(\phi)^{(j)} \, \odot \DCT^{-1}(\psi)^{(j)} \right\}_{j=0}^{N_1 + N_2} \right),
\]
where $\odot$ denotes the product on $\cX$.

Similarly, assume again that $\psi\in\ell^1_\omega(\cX)$ is of degree $N_2$, and consider also $A\in\ell^1_\omega(\B(\cX,\cX))$ of degree $N_1$ together with the corresponding multiplication operator $\cA=\cM(A)\in \B\left(\ell^1_\omega(\cX),\ell^1_\omega(\cX)\right)$. In order to compute $\cA\psi$, one can use the same strategy of evaluating on a Chebyshev grid, calculating matrix/vector products in $\B(\cX,\cX)\times\cX$, and then interpolating back:
\[
\cA \psi = \DCT \left( \left\{ \DCT^{-1}(A)^{(j)} \,  \DCT^{-1}(\psi)^{(j)} \right\}_{j=0}^{N_1 + N_2} \right).
\]
%

Informally, in this context, we conclude that, for the execution of the computer-assisted proof of an entire branch of solutions, it suffices to modify the pointwise estimates as follows
\[
\begin{array}{c@{\quad}c@{\quad}c}
\text{Pointwise} & & \text{Continuation} \\
\hline \\
\cX & \rightsquigarrow & \cX(\ell^1_\omega (\R)) \\
(\R, \times) & \rightsquigarrow & (\ell^1_\omega(\R), *) \\
|\cdot| & \rightsquigarrow & \|\cdot\|_{\ell^1_\omega(\R)}
\end{array}
\]
and to use the DCT and its inverse to lift all finite calculations from $\cX$ to $\ell^1_\omega(\cX)$.
Compared to validating a solution for fixed parameters, there is therefore no additional work required in deriving continuation estimates, and very little additional work required for implementing them!
This is a very helpful feature of our approach, which will be illustrated further in Section~\ref{sec:applications}.

\begin{remark}\label{rem:systems}
    We assumed here that $\cX$ was a single $\ell^1$ space. If the problem under consideration is given by a system, or if we solve for some parameters, one would instead take for $\cX$ a cartesian product $\cX=\prod_{j=1}^J\cX_j$, where each $\cX_j$ is a Banach algebra (typically an $\ell^1$ space for a genuine component of the system, or $\R$ or $\C$ for a parameter). In such case, one can simply derive the estimates for Theorem~\ref{th:NK} component-wise. That is, instead of working with $\ell^1_\omega(\cX)$, one may also consider $\prod_{j=1}^J\ell^1_\omega(\cX_j)$ with $\Vert (x_1,\ldots,x_J)\Vert \bydef \sum_{j=1}^J \Vert x_j\Vert_{\ell^1_\omega(\cX_j)}$, which is isometrically isomorphic to $\ell^1_\omega(\cX)$, and then apply the above considerations to each $\ell^1_\omega(\cX_j)$. 

In particular, a linear operator $A\in\B(\cX,\cX)$ can then be split into several blocks
\begin{equation*}
    A = \begin{pmatrix}
A_{11} & \ldots & A_{1J} \\
\vdots & A_{ij} & \vdots \\
A_{J1} & \ldots & A_{JJ} \\
\end{pmatrix},
\end{equation*}
where each $A_{ij}$ belongs to $\B(\cX_j,\cX_i)$. The associated multiplication operator $\cA=\cM(A)$ written with respect to $\prod_{j=1}^J\ell^1_\omega(\cX_j)$ is then simply obtained by taking the multiplication operator associated to each block
\begin{equation*}
    \cA = \begin{pmatrix}
\cM(A_{11}) & \ldots & \cM(A_{1J}) \\
\vdots & \cM(A_{ij}) & \vdots \\
\cM(A_{J1}) & \ldots & \cM(A_{JJ})
\end{pmatrix}.
\end{equation*}
Of particular interest to us is the fact that an upper bound for the operator norm of $\cA$ can be obtained as follows
\begin{align}
\label{eq:normAgeneral_syst}
    &\left\Vert \cA \right\Vert_{\B\left(\prod_{j=1}^J\ell^1_\omega(\cX_j),\, \prod_{j=1}^J\ell^1_\omega(\cX_j)\right)} \leq  \\
    & \qquad\quad \left\Vert
    \begin{pmatrix}
        \left\Vert\cM(A_{11})\right\Vert_{\B\left(\ell^1_\omega(\cX_1),\, \ell^1_\omega(\cX_1)\right)} & \ldots & \left\Vert\cM(A_{1J})\right\Vert_{\B\left(\ell^1_\omega(\cX_J),\, \ell^1_\omega(\cX_1)\right)} \\
        \vdots & \left\Vert\cM(A_{ij})\right\Vert_{\B\left(\ell^1_\omega(\cX_j),\, \ell^1_\omega(\cX_i)\right)} & \vdots \\
        \left\Vert\cM(A_{J1})\right\Vert_{\B\left(\ell^1_\omega(\cX_1),\, \ell^1_\omega(\cX_J)\right)} & \ldots & \left\Vert\cM(A_{JJ})\right\Vert_{\B\left(\ell^1_\omega(\cX_J),\, \ell^1_\omega(\cX_J)\right)}
    \end{pmatrix}
    \right\Vert_{\B\left(\R^J,\R^J\right)}, \nonumber
\end{align}
with $\R^J$ endowed with the $1$-norm, and where the norm of each block can be computed according to~\eqref{eq:normAgeneral}.
Concrete examples of problems with multiple components are given in Section~\ref{sec:sqrt_example} and Section~\ref{sec:skt}.
\end{remark}

\begin{remark}
For polynomial nonlinearities, estimating the bounds in Theorem~\ref{th:NK} 
is achieved via the convolution product and the associated Banach algebra; this was the case in the cubic example in Section~\ref{sec:cubic_root}, and will also be the case in the next toy problem presented in Section~\ref{sec:sqrt_example} as well as for the more advanced problems done in Section~\ref{sec:applications}.
If the nonlinear term is non-polynomial, however, this requires that we know how to control its norm in $\ell^1_\omega(\cX)$.
Several strategies exist such as combining the problem with additional differential equations modeling the nonlinearities to retrieve a system of polynomial equations~\cite{Hen21}, using Taylor series expansions~\cite{BrePay24}, or exploiting the discrete Poisson summation formula~\cite{AalBerLes25}.
\end{remark}

\begin{remark}\label{rem:approx_interp}
Combined with the result of Theorem~\ref{th:nobif}, in some specific cases, one might want to take $\bx = \DCT(\{\bx^{(j)}\}_{j=0}^N)$, i.e., to have the exact equality $\bx(\lambda^{(j)}) = \bx^{(j)}$ at all interpolation nodes.
This property is particularly useful when enforcing constraints (e.g., a symmetry) at specific nodes, see for instance~\cite[Section 4]{CalGarHenLesMir24}.
However, a subtlety arises depending on the choice of algorithm used for the Fourier transform.
If one does not opt for the DFT and instead employs, for example, the classical Cooley--Tukey radix-2 FFT, an additional frequency mode corresponding to the Nyquist frequency appears in the discrete Fourier representation.
This mode, which occurs when the number of grid points is even, cannot in principle be discarded, as it carries essential information that does not admit a symmetric counterpart in the frequency spectrum.

That being said, we can freely impose that the Nyquist frequency corresponds to a cosine mode.
Then, the trigonometric polynomial coincide with the original one (without the cosine symmetry) exclusively at the nodes.

To be concrete, suppose that the FFT algorithm requires the sample size to be a power of $2$ and that the interpolation polynomial is required to interpolate the given nodes.
We first choose the degree of the Chebyshev interpolation polynomial as $N = 2^m$.
We then sample the curve on a Chebyshev grid of size $2^m+1$, which corresponds to a full Fourier grid of size $2^{m+1}$ (with nodes $2\pi k/2^{m+1}$ for $k=0, \dots 2^{m+1}-1$).
Applying the inverse FFT yields the coefficients of the Chebyshev interpolant, which consists of $2^m + 1$ entries; the coefficient of the Nyquist frequency is halved to enforce the cosine symmetry.
\end{remark}



\section{Choice of a parameterization and extension to multi-parameter continuation}\label{sec:reparam}

Up to now, we considered situations in which the curve of solutions of the continuation problem~\eqref{eq:Fxl} could be parametrized by $\lambda \in [-1,1]$.
However, this may not always be true, particularly when the curve folds with respect to $\lambda$, in which case a pseudo-arclength parameterization can be employed.

A distinct but related issue arises for the multi-parameter continuation, i.e., when $\lambda \in \Lambda \subset \R^d$ with $d \ge 2$.
The parameter region $\Lambda$ (assumed to be simply connected and compact) need not to be a rectangular shape re-scalable to $[-1,1]^d$ under an affine transformation.
To keep working with Chebyshev series, we must find a surjective map $\theta : [-1,1]^d \to \Lambda$ and consider the continuation problem, with $u = x \circ \theta$,
\begin{equation}\label{eq:cube2Lambda}
0 = \tilde{F}(u, s) \bydef F(u, \theta(s)), \qquad \text{for all } s\in[-1,1]^d.
\end{equation}
The injectivity of the map $\theta$ may not be necessary, although in practice a bijective $\theta$ is often used.
This question of choosing a reparameterization is also present when $d=1$ and $\Lambda=[a,b]$ is an arbitrary compact interval, but in this case the bijective affine transformation $\theta(s) = a + \frac{1+s}{2}(b-a)$ is the obvious choice.
However, other alternatives might still be of interest.
Indeed, let us consider the continuation problem
\[
0 = F(x, \lambda) = x - \cos(\lambda), \qquad \lambda\in [0,\pi].
\]
With the nonlinear change of variable $\theta(s) = \arccos(s)$, the problem simply becomes 
\[
0 = \tilde{F}(u,s) = u - s, \qquad s\in [-1,1].
\]
In particular, the exact solution $s \mapsto u^\star(s)$ has a unique non-zero Chebyshev mode.
Hence, conceptually, depending on $\theta$, one may obtain a very accurate approximate curve of solutions using fewer modes (or equivalently, fewer interpolation nodes) than with the affine reparameterization.

Finding the ``optimal'' (in a sense intentionally left unspecified) reparameterization $\theta$ is out of the scope of this article.
Nevertheless, in Section~\ref{sec:pseudo_arclength}, we detail how the pseudo-arclength parameterization integrates into our rigorous continuation framework.
Next, we provide in Section~\ref{sec:multi_dim} a convenient approach to handle multi-parameter continuation when $\Lambda$ does not have a rectangular shape.

\subsection{Rigorous pseudo-arclength continuation}
\label{sec:pseudo_arclength}

In this section, we consider a one-parameter continuation problem of the form~\eqref{eq:Fxl}, where $\Lambda\subset\R$ is a compact interval.
The solution manifold cannot, in general, be represented globally as a graph $x^\star = x^\star(\lambda)$ over $\lambda \in \Lambda$.
A fold point $(x^{(fold)}, \lambda^{(fold)})$ illustrates this since $D_x F(x^{(fold)}, \lambda^{(fold)})$ ceases to be invertible so that rigorously validating a branch via Theorem~\ref{th:NK} fails.
We must then decide on a parameterization $s \in [-1, 1] \mapsto (x^\star(s), \lambda^\star(s))$ of the solution curve.
From a theoretical perspective, it is standard to use the arclength as a parameterization; although, in practice, an approximation, called \emph{pseudo-arclength}, is sufficient.

\subsubsection{Pseudo-arclength and corresponding zero-finding problem}
\label{sec:pal_method}

The pseudo-arclength continuation method consists in iteratively constructing an approximation of the solution curve using a \emph{predictor-corrector scheme} (see e.g., \cite{krauskopf2007numerical}).
Given an approximate point $\bu^{(j)} = (\bar{x}^{(j)}, \bar{\lambda}^{(j)})\in\cX\times\R$ on the curve (i.e., $F(\bu^{(j)})\approx 0$) and an associated approximate tangent vector $\dot{u}^{(j)}$ (i.e., $D_{(x, \lambda)} F (\bu^{(j)}) \dot{u}^{(j)} \approx 0$), one defines a hyperplane via the affine constraint
\begin{equation}\label{eq:hyperplanes}
\left\langle (x, \lambda) - (\bu^{(j)} + \delta^{(j)} \dot{u}^{(j)}), \dot{u}^{(j)} \right\rangle = 0,
\end{equation}
where $\delta^{(j)} > 0$ is a chosen \emph{step-size}, $\bu^{(j)} + \delta^{(j)} \dot{u}^{(j)}$ is called the \emph{predictor} and $\langle \cdot, \cdot \rangle$ is some inner product on $\cX\times\R$.
This hyperplane generically intersects the solution curve transversely, if $\delta^{(j)}$ is small enough. We then compute an approximate zero $\bu^{(j+1)} =(\bar{x}^{(j+1)}, \bar{\lambda}^{(j+1)})$ of $F$ on that hyperplane, called the \emph{corrector}.
If the tangent vector $\dot{u}^{(j)}$ was taken such that $\langle \dot{u}^{(j)}, \dot{u}^{(j)}\rangle = 1$, then $\delta^{(j)}$ corresponds to a first order approximation of the arclength of the curve between $\bu^{(j)}$ and $\bu^{(j+1)}$.

By repeating this process, we generate a collection of $N+1$ points lying approximately on the solution curve.
To retrieve an approximation of this curve, we let $s^{(j)}$ be the Chebyshev nodes in $[-1,1]$,
\begin{equation}
s^{(j)} = - \cos\Big(\frac{j\pi}{N} \Big), \qquad j = 0, \dots, N,
\end{equation}
and compute the polynomial $\bu:[-1,1]\to \cX\times\R$ such that $\bu(s^{(j)}) \approx \bu^{(j)}$ for $j = 0, \dots, N$.
With the notations of Section~\ref{sec:basic_stuff_Cheb}, $\bu$ is simply obtained as $\bu \approx \DCT^{-1}\left(\{\bu^{(j)}\}_{j=0}^N\right)$.

In order to validate the approximate curve of solutions $\bu$, we introduce the spaces $\cX_\pa = \cX\times\R$ and $\cY_\pa = \cY\times\R$, where $u=(x,\lambda)$ denotes a generic elements in $\cX_\pa$.
We also compute $\dot{u} \approx \DCT^{-1}\left(\{\dot{u}^{(j)}\}_{j=0}^N\right)$, and note that both $\bu$ and $\dot{u}$ can be seen as elements of $\ell^1_\omega(\cX_\pa)$. We then consider the zero-finding problem 
$F_\pa : \cX_\pa \times [-1,1]  \to \cY_\pa$ given by
\begin{equation}
0 = F_\pa(u, s) \bydef
\begin{pmatrix}
F ( u) \\
\langle u-\bu(s), \dot{u}(s) \rangle
\end{pmatrix}, \qquad \text{for all } s\in [-1,1].
\end{equation}
As a brief remark, we have chosen to consider a slightly different set of hyperplanes than those produced numerically by the pseudo-arclength method given in~\eqref{eq:hyperplanes}.
Either choice is valid, but we prefer this set because the last component of $F_\pa(\bu(s), s)$ then vanishes.

We are now back to the situation considered in Section~\ref{sec:parameter_continuation}, except that the parameter is now called $s$ instead of $\lambda$.
In particular, note that even if we are at a fold point $u = (x,\lambda)$ for the original $F(x,\lambda)=0$ problem, and therefore $D_x F(u)$ has a one-dimensional kernel, $D_u F_\pa(u,s)$ is still expected to be invertible (see, e.g.,~\cite[Chapter 1]{KraOsiGal07}).
Therefore, there is no longer a theoretical obstruction to rigorously validate the solution curve passing through this point.
Following Section~\ref{sec:framework}, we apply Theorem~\ref{th:NK} to an extension $\cF_\pa$ of $F_\pa$, defined on the space $\ell^1_\omega(\cX_\pa)$, and with the approximate zero $\bu\in \ell^1_\omega(\cX_\pa)$ constructed above.

\begin{remark}\label{rem:nobif_pal}
In that case, we also do not hit any bifurcation on the proven branch back in the projected space $(x, \lambda)$.
Indeed, as discussed in Section~\ref{sec:ell12C0}, a successful application of Theorem~\ref{th:NK} to $\cF$ on $\ell^1_\omega(\cX)$ (or of Theorem~\ref{th:NKpara} to $F$ on $\cX\times\Lambda$) proves that the validated branch does not exhibit bifurcations. We note that this still holds if the branch is validated using pseudo-arclength continuation, i.e., if we successfully apply Theorem~\ref{th:NK} to $\cF_\pa$ on $\ell^1_\omega(\cX_\pa)$ (or Theorem~\ref{th:NKpara} to $F_\pa$ on $\cX_\pa\times\Lambda$).
Then, condition~\eqref{eq:condr2NK} shows that $D_u F_\pa(u^\star(s),s)$ is injective and therefore that the kernel of $DF(u^\star(s))$ is at most one-dimensional, for all $s\in[-1,1]$.
\end{remark}

\begin{remark}
This procedure does not enforce how the step-size $\delta^{(j)}$ is chosen.
As such, each approximate point $\bu^{(j)}$ is located at an arclength $\frac{L}{2}(\theta^{(j)}+1)$, where $L$ denotes the total arclength, for some value $\theta^{(j)}$, along the solution curve.
Hence, we approximate the arclength parameterization well if $\theta^{(j)} \approx s^{(j)}$.

However, a standard predictor-corrector scheme would probably not choose $\delta^{(j)}$ in such a way that $\theta^{(j)} \approx s^{(j)}$. 
Yet, we believe that it is worth mimicking an arclength parameterization of the curve.
It is known to be optimal in terms of analyticity of the parameterization (see, e.g.,~\cite{NesPap17}), whereas even if the solution curve is a smooth one-dimensional manifold, an arbitrary parameterization $s \mapsto (x^\star(s),\lambda^\star(s))$ of that curve may not be smooth.
The contraction argument described in Section~\ref{sec:ell12C0} relies on the ability to to obtain an accurate approximation of the desired curve of solutions $s\mapsto(x^\star(s),\lambda^\star(s))$ using polynomial interpolation at Chebyshev nodes.
This approach is more efficient the smoother the curve.

To retrieve a close approximation of the arclength parameterization, we introduce a target arclength $L > 0$ for the piece of the curve we are interested in, and pick the successive step-sizes as
\begin{equation}\label{eq:deltaj}
\delta^{(j)} = \frac{L}{2} \left(s^{(j+1)}-s^{(j)} \right),
\end{equation}
so that indeed, each approximate point $\bu^{(j)}$ is located at an arclength close to $\frac{L}{2}(s^{(j)} + 1)$ along the curve of solutions.
The target arclength $L$ can be estimated afterwards (instead of an a priori heuristic), and one may then re-sample the curve at~\eqref{eq:deltaj}.
\end{remark}

\subsubsection{A basic example: the square root}
\label{sec:sqrt_example}

Let us consider the square root problem
\begin{equation}\label{eq:sqrt}
 0 = F(x, \lambda) = x^2 - \lambda, \qquad x\in\R,\, \lambda\ge 0.   
\end{equation}
Our goal is to compute the curve of solutions to~\eqref{eq:sqrt} for $\lambda \in [0,1]$, meaning the union of $\{ -\sqrt{\lambda} \, : \, \lambda \in [0,1] \}$ and $\{ \sqrt{\lambda} \, : \, \lambda \in [0,1] \}$.
The pseudo-arclength continuation would also be useful for the cubic root example, as it allows for a rigorous continuation through the apparent singularity at the origin, which is not possible with the continuation setup used in Section~\ref{sec:cubic_root}.
Moreover, a more complex example is detailed in Section~\ref{sec:skt}, where we validate curves of non-trivial steady-states for the Shigesada--Kawasaki--Teramoto system.

We proceed as described in Section~\ref{sec:pal_method}.
An approximate arclength from points obtained numerically along the curve would be sufficient, but for this toy problem it is easily computable by hand and we find $L=\sqrt{5} + \frac{1}{2} \ln(2 + \sqrt{5})$.
We start from the upper end of the branch at $\lambda = 1$, so we take $\bu^{(0)} = (\bx^{(0)},\bar{\lambda}^{(0)}) = (1,1)$, and compute successive approximate points $\bu^{(j)}$ using step-sizes chosen in terms of the Chebyshev nodes as in~\eqref{eq:deltaj}.
This procedure also produces approximate tangent vectors $\dot{u}^{(j)} = (\dot{x}^{(j)},\dot{\lambda}^{(j)})$, and we build polynomial functions $\bu \approx \DCT^{-1}\left(\{\bu^{(j)}\}_{j=0}^N\right)$ and $\dot{u} \approx \DCT^{-1}\left(\{\dot{u}^{(j)}\}_{j=0}^N\right)$.
The approximate solution curve $\bu$ obtained this way, as well as the corresponding hyperplanes at the nodes $s^{(j)}$, are depicted in Figure~\ref{fig:sqrt}.

The zero-finding problem for the pseudo-arclength is then $F_\pa:\R^2 \times[-1,1]\to\R^2$ given by
\[
F_\pa(u,s) = 
\begin{pmatrix}
x^2 - \lambda \\
(x-\bx(s)) \dot{x}(s) + (\lambda-\bar{\lambda}(s)) \dot{\lambda}(s)
\end{pmatrix}, \qquad \text{for all } u = (x,\lambda) \in\R^2, s\in[-1,1].
\]
Here, we consider $\cX_\pa = \R^2$ endowed with the $1$-norm
\[
\| u \|_{\R^2} \bydef |x| + |\lambda|, \qquad \text{for all } u = (x, \lambda) \in \R^2.
\]
The superposition operator $\cF_\pa : \left(\ell^1_\omega(\R) \right)^2 \to \left(\ell^1_\omega(\R) \right)^2$ associated to $F_\pa$ is given by
\[
\cF_\pa(u) = 
\begin{pmatrix}
x * x - \lambda  \\
(x-\bx) * \dot{x} + (\lambda-\bar{\lambda}) *\dot{\lambda}
\end{pmatrix}, \qquad \text{for all } u=(x,\lambda)\in\left(\ell^1_\omega(\R) \right)^2,
\]
and we consider the following norm:
\begin{equation*}
    \| u \|_{\left(\ell^1_\omega(\R) \right)^2} \bydef \| x \|_{\ell^1_\omega(\R)} + \| \lambda \|_{\ell^1_\omega(\R)}, \qquad \text{for all } u = (x, \lambda) \in \left(\ell^1_\omega(\R) \right)^2.
\end{equation*}
Note that, according to Remark~\ref{rem:systems}, we immediately defined $\cF_\pa$ on $\ell^1_\omega(\R)\times \ell^1_\omega(\R)$ instead of using the (isometrically isomorphic) space $\ell^1_\omega(\R\times\R)$.
Moreover,
we can define the approximate inverse as
\[
\cA = \cM(A) =
\begin{pmatrix}
\cM\left(A_{11}\right) & \cM\left(A_{12}\right) \\
\cM\left(A_{21}\right) & \cM\left(A_{22}\right)
\end{pmatrix} :
\left(\ell^1_\omega(\R)\right)^2 \to \left(\ell^1_\omega(\R)\right)^2,
\]
where
\[
A =
\begin{pmatrix}
A_{11} & A_{12} \\
A_{21} & A_{22}
\end{pmatrix} \approx \DCT^{-1} \left( \left\{D_{u} F_\pa(\bu^{(j)}, s^{(j)})^{-1} \right\}_{j=0}^N\right).
\]
The $\approx$ symbol emphasizes that neither the matrix inversion nor $\DCT^{-1}$ have to be computed rigorously.

In order to apply Theorem~\ref{th:NK} to the superposition map $\cF_\pa$,
we need to derive formulas to compute $Y$, $Z_1$ and $Z_2$.
We find, with $\bu = (\bx, \bar{\lambda})\in \left(\ell^1_\omega(\R) \right)^2 $,
\begin{align*}
\left\| \cA \cF_\pa(\bu) \right\|_{\left(\ell^1_\omega(\R) \right)^2}
&= \left\| \begin{pmatrix}
\cM\left(A_{11}\right) & \cM\left(A_{12}\right) \\
\cM\left(A_{21}\right) & \cM\left(A_{22}\right)
\end{pmatrix}
\begin{pmatrix}
\bx * \bx - \bar{\lambda} \\
0
\end{pmatrix} \right\|_{\left(\ell^1_\omega(\R) \right)^2} \\
&= \left\|\begin{pmatrix} \left\| A_{11}*(\bx* \bx - \bar{\lambda} ) \right\|_{\ell^1_\omega(\R)} \\ \left\| A_{21}*(\bx* \bx - \bar{\lambda} ) \right\|_{\ell^1_\omega(\R)} \end{pmatrix}\right\|_{\R^2}
=Y,
\end{align*}
\begin{align*}
&\left\| \mathcal{I} - \cA D\cF_\pa(\bu) \right\|_{\B\left(\left(\ell^1_\omega(\R) \right)^2,\,\left(\ell^1_\omega(\R) \right)^2\right)}\\
& \qquad\qquad\qquad = \left\| \begin{pmatrix}
\cM(A_{11}* 2\bx + A_{12}* \dot{x}-e_0)  & \cM(-A_{11} + A_{12}* \dot{\lambda}) \\
\cM(A_{21}* 2\bx + A_{22}* \dot{x})  & \cM(-A_{21} + A_{22}* \dot{\lambda}-e_0)
\end{pmatrix} \right\|_{\B\left(\left(\ell^1_\omega(\R) \right)^2,\,\left(\ell^1_\omega(\R) \right)^2\right)} \\
& \qquad\qquad\qquad \le \left\| \begin{pmatrix}
\| A_{11}* 2\bx + A_{12}* \dot{x} -e_0 \|_{\ell^1_\omega(\R)}  & \|-A_{11} + A_{12}* \dot{\lambda}\|_{\ell^1_\omega(\R)} \\
\|A_{21}* 2\bx + A_{22}* \dot{x}\|_{\ell^1_\omega(\R)} & \|-A_{21} + A_{22}* \dot{\lambda} -e_0\|_{\ell^1_\omega(\R)}
\end{pmatrix} \right\|_{\B(\R^2, \R^2)} = Z_1,
\end{align*}
and, for all $u=(x,\lambda)\in\cX_\pa$,
\begin{align*}
\left\| \cA(D\cF_\pa(u) - D\cF_\pa(\bu)) \right\|_{\B\left(\left(\ell^1_\omega(\R) \right)^2,\,\left(\ell^1_\omega(\R) \right)^2\right)} & = \left\| \begin{pmatrix}
\cM\left(A_{11}\right) & \cM\left(A_{12}\right) \\
\cM\left(A_{21}\right) & \cM\left(A_{22}\right)
\end{pmatrix}
\begin{pmatrix}
2\cM\left(x - \bx\right) & 0 \\
0 & 0
\end{pmatrix}\right\|_{\B\left(\left(\ell^1_\omega(\R) \right)^2,\,\left(\ell^1_\omega(\R) \right)^2\right)} \\
&\leq 2\left\| \begin{pmatrix}
\| A_{11}* (x-\bx)\|_{\ell^1_\omega(\R)} & 0 \\
\| A_{21}* (x-\bx)\|_{\ell^1_\omega(\R)} & 0
\end{pmatrix}\right\|_{\B(\R^2, \R^2)} \\
&= 2\left(\left\| A_{11}* (x-\bx)\right\|_{\ell^1_\omega(\R)} + \left\| A_{21}* (x-\bx)\right\|_{\ell^1_\omega(\R)} \right) \\
&\leq 2\left(\left\| A_{11}\right\|_{\ell^1_\omega(\R)} + \left\| A_{21}\right\|_{\ell^1_\omega(\R)} \right) \left\| u-\bu\right\|_{_{\left(\ell^1_\omega(\R) \right)^2}},
\end{align*}
hence we take $Z_2 = 2\left(\left\| A_{11}\right\|_{\ell^1_\omega(\R)} + \left\| A_{21}\right\|_{\ell^1_\omega(\R)} \right)$.

\begin{figure}[ht!]
  \begin{center}
    \includegraphics{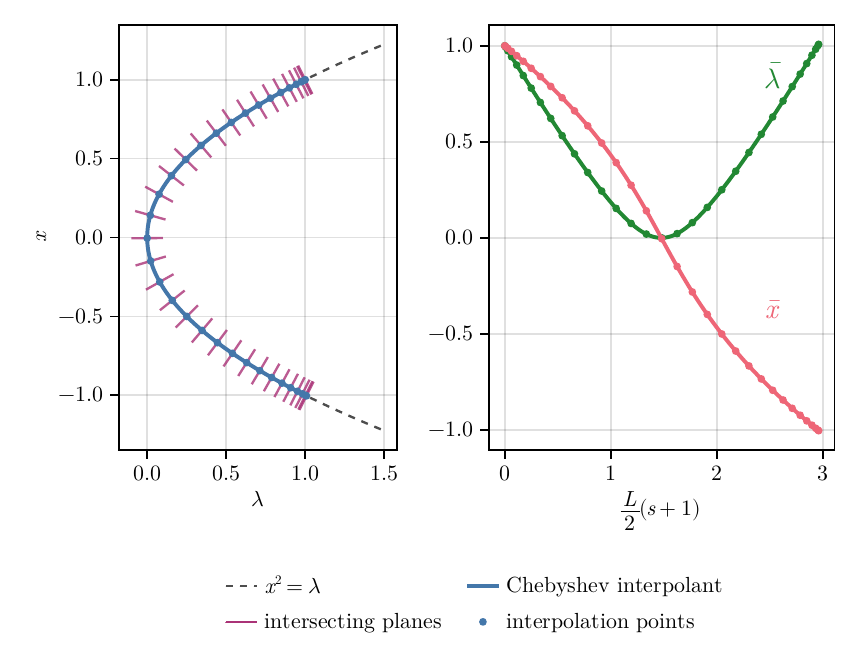}
  \end{center}
  \caption{Illustration of the rigorous validation of a piece of the square root branch based on the pseudo-arclength reformulation.}
  \label{fig:sqrt}
\end{figure}

\paragraph{Results of the code.}
For $N = 32$, the execution of the code in \cite{CODE} produces the approximate curve represented on Figure~\ref{fig:sqrt}, and certifies the following bounds
\begin{align*}
Y &=  1.81862 \times 10^{-6}, \\
Z_1 &= 2.44559 \times 10^{-4}, \\
Z_2 &= 4.98384, \qquad \text{with } \rstar = \infty,
\end{align*}
and a valid error bound for the entire branch is $r = 1.81907 \times 10^{-6}$.

\subsection{Multi-parameter continuation}
\label{sec:multi_dim}

Let us now go back to the original continuation problems of the form~\eqref{eq:Fxl}, and discuss how to extend the rigorous continuation approach presented in Section~\ref{sec:parameter_continuation} when the parameter region $\Lambda \subset \R^d$ is a higher-dimensional set ($d \geq 2$), assumed to be compact and simply connected. 

If $\Lambda = [-1, 1]^d$, this extension is completely straightforward.
Indeed, we can represent the solution manifold as a multi-variate Chebyshev series of the form
\begin{equation}
x(\lambda) = \sum_{n = (n_1, \dots, n_d) \in \Z^d} x_{|n_1|, \dots, |n_d|} T_{|n_1|} (\lambda_1) \cdots T_{|n_d|} (\lambda_d), \qquad \text{for all } \lambda = (\lambda_1, \dots, \lambda_d) \in [-1, 1]^d,
\end{equation}
starting from an approximate solution $\bx$ constructed as a truncated multi-variate Chebyshev series (i.e., a multi-variate polynomial written in the Chebyshev basis).
As in the $d=1$ case, this approximate solution can be efficiently constructed by sampling the solution manifold on the Chebyshev grid in $[-1,1]^d$ generated by the nodes
\begin{equation}\label{eq:multi_chebyshev_nodes}
\lambda^{(j_1, \dots, j_d)} = \left(-\cos \Big(\frac{j_1\pi}{N_1}\Big), \dots, -\cos \Big(\frac{j_d\pi}{N_d}\Big) \right), \quad j_l = 0, \dots, N_l, \qquad l = 1, \dots, d,
\end{equation}
and then interpolating using the multi-dimensional version of the inverse DCT given in~\eqref{eq:DCT}.

Given $\omega = \left(\omega^{(1)},\ldots,\omega^{(d)}\right)$, where each $\omega^{(j)}$ is a weight sequence satisfying~\ref{A1}-\ref{A2}, the sequence space~\eqref{eq:weighted_ell1} introduced in the $d=1$ case generalizes to
\begin{equation}\label{eq:ell1_multi}
\ell^{1,\otimes d}_\omega (\cX) \bydef \left\{ \psi \in \cX^{\N^d} \, : \, \| \psi \|_{\ell^{1,\otimes d}_\omega (\cX)} \bydef \sum_{n \in \Z^d} \left\| \psi_{|n_1|, \dots, |n_d|} \right\|_\cX \omega^{(1)}_{|n_1|} \cdots \omega^{(d)}_{|n_d|} < \infty \right\}.
\end{equation}
The discrete convolution given in \eqref{eq:conv} also generalizes in a direct way, namely, for any $\phi, \psi \in \ell^{1,\otimes d}_\omega (\C)$, we have
\begin{equation}\label{eq:conv_multi}
(\phi * \psi)_n \bydef \sum_{n' \in \Z^d} \phi_{|n_1 - n_1'|, \dots, |n_d - n_d'|} \psi_{|n_1'|, \dots, |n_d'|},
\end{equation}
and a Banach algebra property analogue to~\eqref{eq:banach_algebra} holds.
Therefore, if we are able to rigorously validate a solution of~\eqref{eq:Fxl} in $\cX$ for a fixed $\lambda_\circ\in[-1,1]^d$, then we should be able to validate the entire manifold of solutions by conducting a proof in $\ell^{1,\otimes d}_\omega (\cX)$, using an extension $\cF : \ell^{1,\otimes d}_\omega (\cX) \to \ell^{1,\otimes d}_\omega (\cY)$ of $F$, in exactly the manner described in Section~\ref{sec:framework}.

In contrast, when $\Lambda$ is not a rectangular parameter region
, we must then make a choice on how to deform $[-1,1]^d$ into $\Lambda$, as in~\eqref{eq:cube2Lambda}.
Many considerations could be made, including the use of an alternative basis that may be better suited, according to some criteria, to the geometry of $\Lambda$.
The literature on parameterization and meshing is extensive, and a detailed study on how to best choose $\theta$ would lead us too far afield from the main focus of this article. 
The important point is that, once a mapping $\theta$ has been selected, we are back to a continuation problem on $[-1,1]^d$ for which the rigorous continuation method presented in Section~\ref{sec:framework} readily applies.
To be precise, we then look for $u = x \circ \theta$ solving the continuation problem
\begin{equation}
0 = \tilde{F}(u, s) \bydef F(u, \theta(s)), \qquad \text{for all } s \in [-1,1]^d.
\end{equation}

In the remainder of this section, we present some simple choices for $\theta$ that proved sufficient for the examples treated in this paper.

We begin with a triangular parameter region $\Lambda \subset \R^2$, say the triangle with vertices $P_1 = (-1, 0)$, $P_2 = (1,-1)$ and $P_3 = (1,1)$.
In the $O\lambda_1\lambda_2$ coordinate system, the lower segment $\overline{P_1 P_2}$ and upper one $\overline{P_1 P_3}$ are graphs over the $\lambda_1$-axis, given by $g_+(\lambda_1) = \frac{1+\lambda_1}{2}$ and $g_- = -g_+$, respectively.
Thus, an elementary way of mapping the square $[-1,1]^2$ to this triangle is via a linear interpolation from $g_-$ to $g_+$:
\begin{equation}\label{eq:trianglemap1}
\theta : (s_1, s_2) \mapsto \left(s_1, g_-(s_1) + \frac{1+s_2}{2}(g_+(s_1) - g_-(s_1)) \right) = \left(s_1, s_2 \frac{1 + s_1}{2} \right).
\end{equation}
Figure~\ref{fig:cheb_theta} illustrates this choice of $\theta$ for a Chebyshev grid generated by the nodes~\eqref{eq:multi_chebyshev_nodes} with $N_1 = N_2 = 16$.
We can then proceed as in~\eqref{eq:cube2Lambda}, and recover a continuation problem set on the square $[-1,1]^2$.

\begin{figure}
    \centering
\begin{tikzpicture}[scale=2]

\begin{scope}
  \draw[->] (-1.2,0) -- (1.2,0) node[right] {$s_1$};
  \draw[->] (0,-1.2) -- (0,1.2) node[above] {$s_2$};

  \draw[thick] (-1,-1) rectangle (1,1);

  \draw[very thick,blue] (-1,-1) -- (-1,1);

  \fill[blue] (-1,-1) circle (0.075);
  \fill[blue] (-1, 1) circle (0.075);
  \fill ( 1,-1) circle (0.075);
  \fill ( 1, 1) circle (0.075);

  \node[below left, blue]  at (-1,-1) {$(-1,-1)$};
  \node[below right] at (1,-1)  {$(1,-1)$};
  \node[above left, blue]  at (-1,1)  {$(-1,1)$};
  \node[above right] at (1,1)   {$(1,1)$};

\foreach \i in {0,...,8} {
  \foreach \j in {0,...,8} {
    \pgfmathsetmacro{\sone}{-cos(\i*pi/8 r)}
    \pgfmathsetmacro{\stwo}{-cos(\j*pi/8 r)}
    \fill[purple] (\sone,\stwo) circle (0.035);
  }
}
\end{scope}

\draw[thick,->] (2,0) -- (2.5,0) node[midway,above] {$\theta$};

\begin{scope}[shift={(4,0)}]
  \draw[->] (-1.2,0) -- (1.2,0) node[right] {$\lambda_1$};
  \draw[->] (0,-1.2) -- (0,1.2) node[above] {$\lambda_2$};

  \coordinate (P1) at (-1,0);
  \coordinate (P2) at (1,-1);
  \coordinate (P3) at (1,1);

  \draw[thick] (P1)--(P2)--(P3)--cycle;

  \foreach \p in {P2,P3} \fill (\p) circle (0.075);
  \fill[blue] (P1) circle (0.075);

  \node[below, blue] at (-1,-0.4) {$P_1=(-1,0)$};
  \node[below right] at (P2) {$P_2=(1,-1)$};
  \node[above right] at (P3) {$P_3=(1,1)$};

\foreach \i in {0,...,8} {
  \foreach \j in {0,...,8} {
    \pgfmathsetmacro{\sone}{-cos(\i*pi/8 r)}
    \pgfmathsetmacro{\stwo}{-cos(\j*pi/8 r)}
    
    \pgfmathsetmacro{\lx}{\sone}              
    \pgfmathsetmacro{\ly}{\stwo*(1+\sone)/2}  
    
    \fill[purple] (\lx,\ly) circle (0.035);
  }
} 
\end{scope}

\end{tikzpicture}
\caption{Chebyshev grid of size $(N_1+1) \times (N_2+1) = (2^3+1) \times (2^3+1) = 9 \times 9$ in the square $[-1,1]^2$ (left) and its image onto the triangle $\Lambda = (P_1, P_2, P_3)$ under the map~\eqref{eq:trianglemap1} (right).}
\label{fig:cheb_theta}
\end{figure}

That said, the map~\eqref{eq:trianglemap1} has an unintended bias as the entire edge $\{s_1 = -1\}$ of $[-1, 1]^2$ is mapped to $P_1$.
Hence, an important proportion of the Chebyshev grid is concentrated around $P_1$.
Without distracting ourselves with a thorough investigation, we could attempt to alleviate this accumulation by adding a fourth point $P_4$ on the boundary of the triangle.
These considerations lead us to Coons' parameterization method~\cite{Coo64,Far14}.
Suppose that there are four surjective boundary maps $\gamma_1, \gamma_2, \gamma_3, \gamma_4 : [-1, 1] \mapsto C_1, C_2, C_3, C_4$ such that $\partial\Lambda = \bigcup_{j=1}^4 C_j$ and
\begin{subequations}\label{eq:bdy_rel}
\begin{alignat}{4}
&C_1 \cap C_4 = \{P_1\}, \qquad &&C_1 \cap C_2 = \{P_2\}, \qquad &&C_2 \cap C_3 = \{P_3\}, \qquad &&C_3 \cap C_4 =\{P_4\}, \\
&P_1 = \gamma_1(-1) = \gamma_4(1), \qquad &&P_2 = \gamma_1(1) = \gamma_2(-1), \qquad &&P_3 = \gamma_2(1) = \gamma_3(-1) \qquad &&P_4 = \gamma_3(1) = \gamma_4(-1).
\end{alignat}
\end{subequations}
We define the map $\theta : [-1, 1]^2 \to \R^2$ by
\begin{equation}\label{eq:coons}
\theta : (s_1, s_2) \mapsto \theta_{1\to3} (s_1, s_2) + \theta_{4\to2}(-s_2, s_1) - \theta_\text{corners}(s_1, s_2),
\end{equation}
where, denoting $\alpha(s) = \frac{1 + s}{2}$,
\begin{subequations}
\begin{align}
\theta_{i\to j} (s_1, s_2) &= \gamma_i( s_1 ) + \alpha(s_2) \big(\gamma_j(-s_1 ) - \gamma_i( s_1 )\big), \\
\theta_\text{corners} (s_1, s_2) &= P_1 + \alpha(s_1) (P_2 - P_1) + \alpha(s_2) \big(P_4 - P_1 + \alpha(s_1) (P_3 - P_4 - P_2 + P_1) \big).
\end{align}
\end{subequations}

Under some assumptions on the boundary curves, Coons' parameterization can be guaranteed to $(i)$ cover the desired parameter region $\Lambda$, $(ii)$ be onto $\Lambda$, and $(iii)$ define a global diffeomorphism from $(-1,1)^2$ onto $\Lambda$.
This follows from the next proposition.

\begin{proposition}\label{prop:coons_onto}
Let $\Lambda$ be a compact and simply connected subset of $\R^2$ such that $\partial \Lambda$ is a Jordan curve, and $D = [-1, 1]^2$.
If $\theta : D \to \R^2$ is continuous, and $\theta(\partial D) = \partial \Lambda$, then $\theta(D) \supset \Lambda$.

If, additionally, $\theta$ is a local diffeomorphism everywhere in the interior $\mathring{D}$ of $D$, then $\theta(D) = \Lambda$.
Further assuming that $\theta|_{\partial D}$ covers $\partial \Lambda$ only once implies that $\theta$ is a diffeomorphism from $\mathring{D}$ onto the interior $\mathring{\Lambda}$ of $\Lambda$.
\end{proposition}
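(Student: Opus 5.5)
The plan is to prove the three claims in turn with standard tools from planar topology: winding numbers/degree for the covering statement, an open–closed argument for the equality $\theta(D)=\Lambda$, and the theory of proper local homeomorphisms (covering maps) for the diffeomorphism statement.

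\textbf{Step 1: $\theta(D)\supset\Lambda$.} By the Jordan curve theorem, $\mathbb{R}^2\setminus\partial\Lambda$ has exactly two components. The bounded one is $\mathring{\Lambda}$, and the curve $\partial\Lambda$ has winding number $\pm1$ around each of its points. Fix $p\in\mathring{\Lambda}$ and suppose, for contradiction, that $p\notin\theta(D)$.
\begin{itemize}
\item The loop $\theta|_{\partial D}$ is then null-homotopic in $\mathbb{R}^2\setminus\{p\}$, via the contraction of $\partial D$ inside the convex set $D$ composed with $\theta$. Hence its winding number about $p$ is $0$.
\item On the other hand, $\theta|_{\partial D}$ is a loop whose image is the Jordan curve $\partial\Lambda$. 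I would like its winding number about $p$ to be nonzero.
\end{itemize}
This second point is the main obstacle: mere surjectivity of $\theta|_{\partial D}$ onto $\partial\Lambda$ does not force a nonzero degree, since a loop can go around and then backtrack. I would therefore interpret the hypothesis $\theta(\partial D)=\partial\Lambda$ as $\theta|_{\partial D}$ being a parameterization of $\partial\Lambda$ of nonzero degree. In the Coons setting this holds because the four boundary maps $\gamma_i$ traverse $C_1,\dots,C_4$ consecutively, in the order fixed by the relations \eqref{eq:bdy_rel}. With this, the degree of the boundary loop around $p$ is nonzero, contradicting the first point. So $\mathring{\Lambda}\subset\theta(D)$. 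Since $\partial\Lambda=\theta(\partial D)\subset\theta(D)$ by hypothesis, we get $\Lambda\subset\theta(D)$.

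\textbf{Step 2: $\theta(D)=\Lambda$ when $\theta$ is a local diffeomorphism on $\mathring{D}$.} It remains to show $\theta(\mathring{D})\subset\Lambda$. Set $U=\theta^{-1}(\mathbb{R}^2\setminus\Lambda)\cap\mathring{D}$, which is open.
\begin{itemize}
\item By the inverse function theorem, $\theta(\mathring{D})$ is open. Hence $\theta(U)$ is open in the unbounded component $E=\mathbb{R}^2\setminus\Lambda$.
\item $\theta(U)$ is also relatively closed in $E$. Indeed, take $q\in E$ with $q=\lim\theta(x_k)$, $x_k\in U$, and pass to a subsequence with $x_k\to x\in D$. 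Then $\theta(x)=q\notin\partial\Lambda$, so $x\notin\partial D$, so $x\in\mathring{D}$ and therefore $x\in U$.
\item $E$ is connected. So if $U\neq\emptyset$, then $\theta(U)=E$, which is unbounded. This contradicts the compactness of $\theta(D)$.
\end{itemize}
Hence $U=\emptyset$. The same open–closed argument shows $\theta(\mathring{D})\cap\partial\Lambda=\emptyset$: otherwise openness of $\theta(\mathring{D})$ would produce points of $E$ in the image of $\mathring{D}$. Therefore $\theta(\mathring{D})=\mathring{\Lambda}$.

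\textbf{Step 3: diffeomorphism $\mathring{D}\to\mathring{\Lambda}$.} By Step 2 the restriction $\theta:\mathring{D}\to\mathring{\Lambda}$ is well defined.
\begin{itemize}
\item It is proper. If $K\subset\mathring{\Lambda}$ is compact, then $\theta^{-1}(K)\cap D$ is closed in $D$, hence compact, and it avoids $\partial D$ because $\theta(\partial D)=\partial\Lambda$.
\item A proper local homeomorphism between connected manifolds is a finite covering map.
\item $\mathring{\Lambda}$ is simply connected: it is a Jordan domain, so this follows from the Schoenflies theorem.
\end{itemize}
A covering of a simply connected space by a connected space is a homeomorphism. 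Combined with the local diffeomorphism property, $\theta$ is a diffeomorphism from $\mathring{D}$ onto $\mathring{\Lambda}$.

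The remaining hypothesis, that $\theta|_{\partial D}$ covers $\partial\Lambda$ only once, enters by fixing the degree of the boundary loop to $\pm1$ in Step 1. By invariance of the degree, the number of sheets of the covering equals this degree, which gives a second confirmation that the covering has one sheet.
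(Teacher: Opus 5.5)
Your proposal is correct, given the extra hypothesis you add in Step 1. That hypothesis is genuinely needed.

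\textbf{Steps 1 and 2.} These follow the paper's proof. The paper also gets $\Lambda\subset\theta(D)$ from $\deg(\theta,\mathring{D},y)=\mathrm{wind}(\theta|_{\partial D},y)\neq0$ for $y\in\mathring{\Lambda}$; your null-homotopy is the same argument. It gets $\theta(D)\subset\Lambda$ from the same ingredients you use: $\Lambda^c$ connected and unbounded, $\theta(D)$ compact, $\theta(\mathring{D})$ open. It picks a point of $\partial(\theta(D))\cap\Lambda^c$ instead of running your open--closed argument.

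The gap you flag in Step 1 is real, and the paper shares it. Its proof simply asserts that $\mathring{\Lambda}$ is the set where $\mathrm{wind}(\theta|_{\partial D},\cdot)\neq0$, which is exactly your nonzero-degree hypothesis. Without it the first claim is false. Identify $\R^2$ with $\C$, let $\Lambda$ be the closed unit disk, and set $\theta=e^{2\pi i f}$, where $f:D\to[0,1]$ is a continuous extension of a boundary function that rises from $0$ to $1$ and falls back. Then $\theta(D)=\theta(\partial D)=\partial\Lambda$, which does not contain $\Lambda$.

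\textbf{Step 3.} This is where you genuinely differ. The paper counts preimages with the degree formula $\pm1=\mathrm{wind}(\theta|_{\partial D},y)=\sum_{x\in\theta^{-1}(y)}\mathrm{sign}\det D\theta(x)$. It then uses that $\det D\theta$ has constant sign on the connected set $\mathring{D}$. This is very short, but it consumes the ``covers only once'' hypothesis.

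Your argument has three ingredients: properness, which comes from $\theta(\partial D)=\partial\Lambda$; the fact that a proper local homeomorphism is a finite covering; and simple connectivity of $\mathring{\Lambda}$ via Schoenflies. It costs these standard topological lemmas but never uses the ``covers only once'' hypothesis. In fact it does not need your extra degree assumption either. Your Step 2 proof that $\theta(\mathring{D})\subset\mathring{\Lambda}$ uses no degree information. A proper local homeomorphism from the nonempty set $\mathring{D}$ into the connected set $\mathring{\Lambda}$ has open and closed image, hence full image.

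So your route shows that the second and third claims hold exactly as stated, and only the first needs the nonzero-degree condition. The paper's preimage count then becomes a consequence rather than an input. This is your closing remark, with the degree taken in absolute value.
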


\begin{proof}
The Jordan curve theorem implies that $\mathring{\Lambda}$ and $\Lambda^c$ can be characterized in terms of the winding number $\mathrm{wind}(\theta|_{\partial D}, \cdot)$ as follows
\[
\mathring{\Lambda} = \{ y \in \R^2 \smallsetminus \partial \Lambda \, : \, \mathrm{wind}(\theta|_{\partial D}, y) \ne 0 \}, \qquad
\Lambda^c = \{ y \in \R^2 \smallsetminus \partial \Lambda \, : \, \mathrm{wind}(\theta|_{\partial D}, y) = 0 \}.
\]
Then, for any $y \in \mathring{\Lambda}$, we have that the Brouwer degree $\mathrm{deg}(\theta, \mathring{D}, y)$ of $\theta$ is equal to $\mathrm{wind}(\theta|_{\partial D}, y) \ne 0$.
Hence, there exists $x \in \mathring{D}$ such that $\theta(x) = y$.
Since $\theta(\partial D) = \partial \Lambda$, this proves that $\theta(D)$ contains $\Lambda$.

To prove the converse inclusion when $\theta$ is a local diffeomorphism, we argue by contradiction, and suppose $\theta(D)\cap \Lambda^c \neq \emptyset$. Since $\Lambda^c$ is connected and unbounded whereas $\theta(D)$ is bounded, this means that $\partial(\theta(D))\cap \Lambda^c \neq \emptyset$, and we now consider $y\in \partial(\theta(D))\cap \Lambda^c$. Using that $\theta(D)$ is compact, we have $y\in\theta(D)$, and the assumption $\theta(\partial D) = \partial\Lambda \subset \Lambda$ yields that $y\in \theta(\mathring{D})$. Therefore, we must have $y\in \partial(\theta(\mathring{D}))$ (otherwise, $y$ would be in the interior of $\theta(\mathring{D})$, hence in the interior of $\theta(D)$, which is incompatible with $y \in \partial(\theta(D))$). However, by assumption $\theta$ is a local diffeomorphism on $\mathring{D}$, therefore $\theta(\mathring{D})$ is open, and $\theta(\mathring{D}) \cap \partial(\theta(\mathring{D})) = \emptyset$, contradiction. 

Lastly, if $\theta|_{\partial D}$ only covers $\partial \Lambda$ only once, we have, for any $y \in \Lambda$,
\[
\pm 1
= \mathrm{wind}(\theta|_{\partial D}, y)
= \mathrm{deg}(\theta, \mathring{D}, y)
= \sum_{x \in \theta^{-1}(\{y\})} \mathrm{sign} \det D\theta (x).
\]
Since $\det D\theta$ has a constant sign, there can only be a unique $x \in \mathring{D}$ so that $\theta(x) = y$.
\end{proof}

An illustration of Coons' parameterization method is displayed on Figure~\ref{fig:coons}.
We make the following observations, with $\theta$ denoting Coons' parametrization as defined in~\eqref{eq:coons}.
\begin{itemize}
\item The orientation of the boundary maps $\gamma_1, \gamma_2, \gamma_3, \gamma_4$ is crucial to ensure that $\theta$ is surjective.
\item As soon as the parameter region $\Lambda\subset \R^2$ is compact, simply connected, and such that $\partial \Lambda$ is a Jordan curve, then $\theta$ is automatically continuous and satisfies $\theta(\partial ([-1,1]^2)) = \partial \Lambda$, hence $\theta([-1,1]^2)$ does cover $\Lambda$ according to Proposition~\ref{prop:coons_onto}.
\item The map $\theta$ will be $C^1$ on $[-1,1]^2$ if the boundary maps $\gamma_1,\gamma_2,\gamma_3,\gamma_4$ are $C^1$, and checking that the determinant of $D\theta$ does not vanish on $[-1,1]^2$ can for instance be accomplished using interval arithmetic.
\item Without the assumption that $\theta$ is a local diffeomorphism in $(-1,1)^2$, the image of Coons' parameterization may be strictly larger than $\Lambda$, for instance if $\Lambda$ is very non-convex.
\item Finally, we point out that Coons' parametrization is also relevant when $\Lambda$ is an immersed surface in $\R^{d'}$ with $d' \ge 3$.
Although the map $\theta$ may fail to be surjective, and even to coincide with $\Lambda$, it may nonetheless be useful whenever an approximate representation of $\Lambda$ is sufficient.
\end{itemize}

\begin{figure}
    \centering
\begin{tikzpicture}[scale=2]
\begin{scope}
  \draw[->] (-1.2,0) -- (1.2,0) node[right] {$s_1$};
  \draw[->] (0,-1.2) -- (0,1.2) node[above] {$s_2$};

  \draw[thick, red,   arrowmid] (-1,-1)--(1,-1);
  \draw[thick,orange!90!black, arrowmid] (1,-1)--(1,1);
  \draw[thick,green!60!black, arrowmid] (1,1)--(-1,1);
  \draw[thick,blue,  arrowmid] (-1,1)--(-1,-1);

  \fill (-1,-1) circle (0.075);
  \fill (-1, 1) circle (0.075);
  \fill ( 1,-1) circle (0.075);
  \fill ( 1, 1) circle (0.075);

  \node[below left]  at (-1,-1) {$(-1,-1)$};
  \node[below right] at (1,-1)  {$(1,-1)$};
  \node[above left]  at (-1,1)  {$(-1,1)$};
  \node[above right] at (1,1)   {$(1,1)$};
  
\foreach \i in {0,...,8} {
  \foreach \j in {0,...,8} {
    \pgfmathsetmacro{\sone}{-cos(\i*pi/8 r)}
    \pgfmathsetmacro{\stwo}{-cos(\j*pi/8 r)}
    \fill[purple] (\sone,\stwo) circle (0.035);
  }
}
\end{scope}

\draw[thick,->] (2,0) -- (2.5,0) node[midway,above] {$\theta$};










\begin{scope}[shift={(4.25,0)}]
  \draw[->] (-1.2,0) -- (1.2,0) node[right] {$\lambda_1$};
  \draw[->] (0,-1.2) -- (0,1.2) node[above] {$\lambda_2$};

  \coordinate (P1) at (-1,0);
  \coordinate (P2) at (1,-1);
  \coordinate (P3) at (1,0);
  \coordinate (P4) at (1,1);

  \draw[thick,red,   arrowmid] (P1)--(P2)  node[midway, below left] {$C_1$};
  \draw[thick,orange!90!black, arrowmid] (P2)--(P3)  node[midway, right] {$C_2$};
  \draw[thick,green!60!black, arrowmid] (P3)--(P4) node[midway, right] {$C_3$};
  \draw[thick,blue,  arrowmid] (P4)--(P1) node[midway, above left] {$C_4$};

  \foreach \p in {P1,P2,P3,P4} \fill (\p) circle (0.075);

  \node[below left] at (P1) {$P_1$};
  \node[below right] at (P2) {$P_2$};
  \node[above right] at (P3) {$P_3$};
  \node[above right] at (P4) {$P_4$};

  \foreach \i in {0,...,8} {
    \foreach \j in {0,...,8} {

      \pgfmathsetmacro{\sone}{-cos(\i*pi/8 r)}
      \pgfmathsetmacro{\stwo}{-cos(\j*pi/8 r)}
      \pgfmathsetmacro{\alphaone}{(1+\sone)/2}
      \pgfmathsetmacro{\alphatwo}{(1+\stwo)/2}

      \pgfmathsetmacro{\gax}{\sone}
      \pgfmathsetmacro{\gay}{-(1+\sone)/2}

      \pgfmathsetmacro{\gbx}{1}
      \pgfmathsetmacro{\gby}{(-1+\stwo)/2}

      \pgfmathsetmacro{\gcmx}{1}
      \pgfmathsetmacro{\gcmy}{(1-\sone)/2}

      \pgfmathsetmacro{\gdmx}{\stwo}
      \pgfmathsetmacro{\gdmy}{(1+\stwo)/2}

      \pgfmathsetmacro{\tttx}{\gax + \alphatwo*(\gcmx - \gax)}
      \pgfmathsetmacro{\ttty}{\gay + \alphatwo*(\gcmy - \gay)}

      \pgfmathsetmacro{\ttttx}{\gdmx + \alphaone*(\gbx - \gdmx)}
      \pgfmathsetmacro{\tttty}{\gdmy + \alphaone*(\gby - \gdmy)}

      \pgfmathsetmacro{\pcx}{-1 + \alphaone*(1 - (-1)) + \alphatwo*(1 - (-1) + \alphaone*(1 - 1 - 1 - 1))}
      \pgfmathsetmacro{\pcy}{0  + \alphaone*(-1 - 0) + \alphatwo*(1 - 0 + \alphaone*(0 - 1 + 1 + 0))}

      \pgfmathsetmacro{\lx}{\tttx + \ttttx - \pcx}
      \pgfmathsetmacro{\ly}{\ttty + \tttty - \pcy}

      \fill[purple] (\lx,\ly) circle (0.035);
    }
  }
\end{scope}

\end{tikzpicture}
\caption{Chebyshev grid of size $(N_1+1) \times (N_2+1) = (2^3+1) \times (2^3+1) = 9 \times 9$ in the square $[-1,1]^2$ (left) and its image onto the triangle $\Lambda$ under the map~\eqref{eq:coons} where $\gamma_1, \gamma_2, \gamma_3, \gamma_4$ are bijective affine functions satisfying~\eqref{eq:bdy_rel}, with orientation as in the picture (right).}
\label{fig:coons}
\end{figure}



\section{Applications}\label{sec:applications}

As a concluding section, we showcase our rigorous continuation strategy on two examples:
\begin{enumerate}
\item A two-parameter continuation of steady-states for the Cahn--Hilliard equation (Section~\ref{sec:ch}), an elliptic PDE with a bi-Laplacian and a cubic nonlinearity;
\item A pseudo-arclength continuation of steady-states for the Shigesada--Kawasaki--Teramoto system (Section~\ref{sec:skt}), an elliptic PDE system with nonlinear diffusion and quadratic reaction terms.
\end{enumerate}

In both cases, the PDEs are set on the spatial domain $(0, 1)$, together with Neumann boundary conditions.
Thus, a steady-state $v=v(y)$ for $y\in[0,1]$ is sought as a cosine series
\begin{equation}
v(y) = \sum_{k \in \mathbb{Z}} v_{|k|} \cos ( \pi k y ).
\end{equation}
The analytic regularity of the solutions suggests that the cosine series coefficients are, a fortiori, elements of the sequence space $\cX = \ell^1_\chi(\R)$, as defined in~\eqref{eq:weighted_ell1}, and with a weight sequence $\chi$ satisfying~\ref{A2}.
Note that $\cX$ inherits a multiplication operation from the product of cosine series, namely $\odot : \cX \times \cX \to \cX$ given, for any $\phi, \psi \in \cX$, by
\begin{equation}\label{eq:conv_cos}
(\phi \odot \psi)_k = \sum_{k' \in \Z} \phi_{|k - k'|} \psi_{|k'|}, \qquad k \in \N.
\end{equation}
Observe that the formula is identical to the multiplication $*$ of Chebyshev series~\eqref{eq:conv}, which is to be expected from the identity~\eqref{eq:cheb_identity}.
The Banach space for the continuation is then taken to be $\ell^{1,\otimes d}_\omega(\cX)$, as defined in~\eqref{eq:ell1_multi}.
Hence, the solution manifold of steady-state is expressed as, for $y \in [0, 1]$ and $\lambda = (\lambda_1, \dots, \lambda_d) \in [-1, 1]^d$,
\begin{equation}
[v(\lambda)](y) = \sum_{n = (n_1, \dots, n_d) \in \Z^d, k \in \Z} v_{|n_1|, \dots, |n_d|, |k|} \cos ( \pi k y ) T_{|n_1|}(\lambda_1) \cdots T_{|n_d|}(\lambda_d).
\end{equation}
%





Unlike the toy examples studied so far in Section~\ref{sec:cubic_root} and Section~\ref{sec:sqrt_example}, the problems for fixed parameters are now themselves infinite dimensional.
To address it, an important operator to introduce is the \emph{truncation operator} $\Pi_{\le K} : \ell^{1,\otimes d}_\omega(\cX) \to \ell^{1,\otimes d}_\omega(\cX)$ given component-wise by
\begin{equation}
(\Pi_{\le K} \psi)_{n_1, \dots, n_d, k} \bydef
\begin{cases}
\psi_{n_1, \dots, n_d, k}, & k \le K, \\
0, & k > K,
\end{cases} \quad n = (n_1, \dots, n_d) \in \N^d, \qquad \text{for all } \psi \in \ell^{1,\otimes d}_\omega(\cX).
\end{equation}
Its complement is the \emph{tail operator} $\Pi_{> K} \bydef I - \Pi_{\le K}$.
Finally, in order to describe the range of the zero-finding problem $F$, let us introduce $\cY = \ell^1_\zeta(\R)$, where $\zeta_k = \chi_k (1+k)^{-2}$ for all $k\in\N$. The Banach space $\cY$ is not necessarily a Banach algebra (the weight sequence $\zeta$ may not satisfy~\eqref{A2}), but this is not an issue. As shown by the assumptions of Theorem~\ref{th:NK}, all the estimates happen in $\cX$, and we merely need to check that $A$ maps $\cY$ back into $\cX$ so that $I-AF$ is a suitable fixed-point map, but this is usually evident from the definition of $A$.

For both of the examples to come, we emphasize that computer-assisted proofs for fixed parameters can already be found in the literature. Our main purpose here is to illustrate that going from such proofs to validating entire curves or surfaces of solutions is straightforward with the approach proposed in this paper. We also point out that this approach already proved very useful for a broad range of problems, including the resolution of a conjecture in celestial mechanics~\cite{CalGarHenLesMir24}, the rigorous study of large deviation estimates for finite time Lyapunov exponents~\cite{BleBluBreEng25}, a rigorous proof of homoclinic snaking~\cite{BerDucLes25}, or the resolution of a conjecture about overhanging water-waves~\cite{CadHaz26}.

\subsection{Two-parameter continuation of steady-states in the Cahn--Hilliard equation}
\label{sec:ch}

Consider the Cahn--Hilliard equation
\begin{equation}\label{eq:ch}
\begin{cases}
\partial_t v = - \Delta (\epsilon^2 \Delta v + v - v^3), & y \in (0, 1),\\
\partial_y v = \partial_y^3 v = 0, & y = 0, 1.
\end{cases}
\end{equation}
Noting that the total mass $\sigma \bydef \int_0^1 v(t, y) \, \mathrm{d} y$ is conserved, and
reorganizing the equations, a steady-state $v = v(y)$ satisfies
\begin{equation}
\begin{cases}
\epsilon^2 \Delta v + v - v^3 = c, & y\in(0, 1),\\
\partial_y v = 0, & y = 0, 1.
\end{cases}
\end{equation}
where $c = \int_0^1 v(y) - v(y)^3 \, \mathrm{d} y$.
In particular, $\sigma = c + \int_0^1 v(y)^3 \, \mathrm{d} y$, and one can equivalently vary $c$ or $\sigma$.

To our knowledge, the only work performing rigorous two-parameter continuation on the steady-state Cahn--Hilliard equation is~\cite{GamLesPug16}, which is also the article that introduced the multi-dimensional rigorous continuation used thus far in the literature; we use it as a main reference point to draw comparison with our method.

\subsubsection*{Step 1: Writing the continuation problem}

Writing $\beta = 1/\epsilon^2$, the two-parameter continuation problem has the form~\eqref{eq:Fxl} with the map $F : \cX \times \Lambda \to \cY$ (where $\Lambda$ is prescribed below) given by 
\begin{equation}\label{eq:steadystateCH}
F (v, \lambda) = \Delta v + \beta (v - v^{\odot 3} - c), \qquad \text{for all } v \in \cX, \lambda = (c, \beta) \in \Lambda.
\end{equation}
In the above expression, we emphasized the discretization of the function space by using the symbol $\odot$ corresponding to the product of cosine series, see~\eqref{eq:conv_cos}, and where $\Delta$ acts on a sequence $\psi \in \cX$ as
\begin{equation}
(\Delta \psi)_k = -\pi^2 k^2 \psi_k, \qquad k \in \N.
\end{equation}
Let
\begin{equation}
g(c) \bydef \frac{79}{2} + 150 c^2, \qquad c_{\max} \bydef \sqrt{\frac{41 - 79/2}{150}} = \frac{1}{10}.
\end{equation}
As our goal is to reproduce the two-parameter validated continuation conducted in~\cite{GamLesPug16}, we consider the parameter set given by
%
\begin{equation}\label{eq:Ushape}
\Lambda = \left\{(c, \beta) \in \R^2 \,: \, g(c) \le \beta \le 41, \, |c| \le c_{\max} \right\}.
\end{equation}
This parameter set is represented on Figure~\ref{fig:coons_parabola}.

\begin{remark}
As explained in~\cite{GamLesPug16}, there is a curve of fold bifurcations in the $(c, \beta)$-plane, near the U-shaped part of the boundary of $\Lambda$. We note that our parameter set $\Lambda$ is not \emph{exactly} the same as the one used in~\cite{GamLesPug16}, as the latter was obtained using a simplicial decomposition and therefore does not have such a simple expression as ours.
Though, both are very similar and, in particular, are roughly the same distance away from the bifurcation curve. 
\end{remark}

We consider two different maps from $[-1, 1]^2$ onto $\Lambda$ using Coons' parameterization method discussed in Section~\ref{sec:multi_dim}.
The first one is produced by considering the bijective set of boundary curves:
\begin{equation}\label{eq:parabolacurves1}
\begin{aligned}
\gamma_1^{(no \; pinch)}(s) &= \Bigg(\frac{s-3}{4} c_{\max} , g\Big(\frac{s-3}{4} c_{\max} \Big)\Bigg), \qquad
\gamma_2^{(no \; pinch)}(s) = \Bigg(\frac{s}{2} c_{\max}, g\Big(\frac{s}{2} c_{\max}\Big)\Bigg), \\
\gamma_3^{(no \; pinch)}(s) &= \Bigg(\frac{s+3}{4} c_{\max}, g\Big(\frac{s+3}{4} c_{\max}\Big)\Bigg), \qquad
\gamma_4^{(no \; pinch)}(s) = \Bigg(-s c_{\max}, 41\Bigg).
\end{aligned}
\end{equation}
The second map is generated by taking an arguably simpler parameterization, but for which, in contrast, some of the boundary curves are non-injective:
\begin{equation}\label{eq:parabolacurves2}
\begin{aligned}
\gamma_1^{(pinch)}(s) &= \big(-c_{\max} , 41\big), \qquad
\gamma_2^{(pinch)}(s) = \big(s c_{\max}, g(s c_{\max})\big), \\
\gamma_3^{(pinch)}(s) &= \big(c_{\max} , 41\big),\phantom{-a} \qquad
\gamma_4^{(pinch)}(s) = \big(-s c_{\max}, 41\big).
\end{aligned}
\end{equation}
Each set of boundary curves yields a Coon parameterization $\theta:[-1,1]\to\Lambda$ as described in~\eqref{eq:coons}.
Figure~\ref{fig:coons_parabola} shows how these two parameterizations maps Chebyshev nodes over $\Lambda$.
Specifically, the parameterization generated by~\eqref{eq:parabolacurves1} (middle figure) distributes much more uniformly the Chebyshev nodes, compared to the parameterization generated by~\eqref{eq:parabolacurves2} (right figure) where an unbalanced proportion of nodes accumulate near the two points $P_1 = P_4$ and $P_2 = P_3$ where injectivity fails.
We will see below to which extend these differences affect the rigorous continuation procedure.

\begin{figure}[ht!]
\centering
\includegraphics{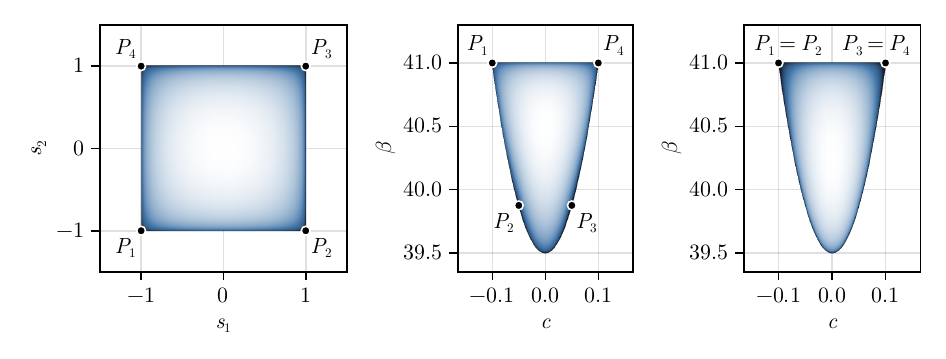}
\caption{Density plot of the Chebyshev grid in the square $[-1,1]^2$ (left) and its image onto the parameter set $\Lambda$ from~\eqref{eq:Ushape}, using the set of boundary curves with no pinch~\eqref{eq:parabolacurves1} (middle) and using the set of boundary curves with pinch~\eqref{eq:parabolacurves2} (right).}
\label{fig:coons_parabola}
\end{figure}

Now that we have a parameterization of $\Lambda$, we have everything to use the framework for the contraction argument discussed in Section~\ref{sec:framework}.
Introduce $u = v \circ \theta$, where $\theta(s) = (\theta_1(s), \theta_2(s)) = (c, \beta) \in \Lambda$, yielding the continuation problem
\begin{equation*}
0 = \tilde{F} (u, s) = \Delta u + \theta_2(s) (u - u^{\odot 3} - \theta_1(s)), \qquad \text{for all } u \in \cX, s \in [-1, 1]^2.
\end{equation*}
The superposition operator $\tilde\cF : \ell^{1, \otimes2}_\omega(\cX) \to \ell^{1, \otimes2}_\omega(\cY)$ associated with $\tilde{F}$ is the mapping 
\begin{equation*}
\tilde\cF (u) = \mathcal{D}^2 u + \theta_2 * (u - u^{* 3} -\theta_1), \qquad \text{for all } u \in \ell^{1, \otimes2}_\omega(\cX),
\end{equation*}
where $\theta = (\theta_1, \theta_2) : [-1, 1]^2 \to \Lambda$ such that $\theta_1, \theta_2 \in \ell^{1, \otimes2}_\omega(\cX)$ are functions of the parameter $s = (s_1, s_2) \in [-1, 1]^2$ and take the form
\[
\theta_j(s) = \sum_{n = (n_1, n_2) \in \Z^2} (\theta_j)_{|n_1|, |n_2|} T_{|n_1|}(s_1) T_{|n_2|}(s_2), \qquad j=1,2.
\]
Note also that $\mathcal{D}^2 = (\Delta, 0, 0, \dots)$ acts on an element $\psi \in \ell^{1, \otimes2}_\omega(\cX)$ via
\begin{equation*}
(\mathcal{D}^2 \psi)_{n_1, n_2, k} = -\pi^2 k^2 \psi_{n_1, n_2, k}, \qquad n = (n_1, n_2) \in \N^2, k \in \N.
\end{equation*}
%

\subsubsection*{Step 2: Computing $\bu$ and $\cA$}

For $s^{(j)} = (s_1^{(j_1)}, s_2^{(j_2)})$, we solve approximately the truncated problem $ \Pi_{\le K} \tF(\Pi_{\le K} u^{(j)}, s^{(j)}) \approx 0$.
Then, for each cosine coefficient $u^{(j)}_k$, we perform a two dimensional inverse DCT to find the Chebyshev interpolation polynomial (or, an approximation of it) $\bu_k(s)$; by construction,
\[
\bu(s) = \begin{pmatrix}
\bu_0(s) \\
\vdots \\
\bu_{K}(s) \\
0 \\
\vdots
\end{pmatrix}, \qquad \bu_k(s) = \bu_{k,0} + 2 \sum_{n = 1}^N \bu_{k,n} T_n (s), \quad k = 0, \dots, K.
\]
Next, we explain how to obtain a suitable $\cA$.
For fixed parameters, the construction is standard in the field of computer-assisted proofs based on Theorem~\ref{th:NK}, and based on using a (relatively) compact perturbation of the leading differential operator $\Delta$ (see Appendix~\ref{app:ch}).
We proceed exactly the same way here, at each Chebyshev nodes, and then interpolate.

To start, note that $D_u \tF(u, s) \psi = \Delta \psi + \theta_2(s) (1 - 3u^{\odot 2}) \odot \psi$.
We thus pick an approximate inverse to $D_u \tF(u^{(j)}, s^{(j)})$ given by $A^{(j)} = A_{\le K}^{(j)} \Pi_{\le K} \oplus \Delta_0^{-1} \Pi_{> K}$, where $\Delta_0 \bydef \Delta + \Pi_{\le 0}$ and $A_{\le K}^{(j)}$ is a numerically computed inverse of $\Pi_{\le K}D_u \tF(u^{(j)}, s^{(j)})\Pi_{\le K}$.
We can represent the approximate inverse more visually:
\begin{equation*}
A^{(j)}
=
\begin{pmatrix}
A_{\le K}^{(j)} \\
& \Delta_0^{-1}\Pi_{> K}
\end{pmatrix}
=
\begin{pmatrix}
A_{\le K}^{(j)} \\
& \frac{1}{-\pi^2 (K+1)^2} \\
& & \ddots \\
& & & \frac{1}{-\pi^2 k^2} \\
& & & & \ddots
\end{pmatrix}.
\end{equation*}
The approximate inverse $A\in\ell^{1, \otimes2}(\B(\cY,\cX))$ is obtained by considering the interpolation polynomial of the $A^{(j)}$.
Then, we pick $\cA = \cM(A)$, with
\[
A(s) =
\begin{pmatrix}
A_{\le K}(s) \\
& \Delta_0^{-1}\Pi_{> K}
\end{pmatrix},
\]
where $A_{\le K}(s)$ is a $K$-by-$K$ matrix valued, order $(N,N)$ Chebyshev polynomial in $s$.
In particular, the tail block $\Delta_0^{-1}\Pi_{> K}$ is constant along the solution family as it does not depend on $s$. Also note that, for each $s$, $A(s)$ maps $\cY$ into $\cX$, therefore $\cA$ does maps $\ell^{1, \otimes2}_\omega(\cY)$ into $\ell^{1, \otimes2}_\omega(\cX)$. Finally, $A(s)$ is a Fredholm operator of index $0$ for each $s$, hence we will get for free that $\cA$ is injective if $Z_1<1$ (see Nota Bene~\ref{rem:Ainj}).

\subsubsection*{Step 3: Writing the bounds}

We are now ready to derive the bounds $Y$, $Z_1$ and $Z_2$ required for applying Theorem~\ref{th:NK}, for the map $\tilde\cF:\ell^{1, \otimes2}_\omega(\cX)\to\ell^{1, \otimes2}_\omega(\cY)$ defined in Step 1.
Readers unfamiliar with such a posteriori validation methods may first want to get acquainted with these techniques in the simpler setting of fixed parameter values.
Appendix~\ref{app:ch} is written precisely for this purpose. 

We stress once again (see Sections~\ref{sec:take_home_message} and~\ref{sec:punct2unif}), that the bounds for the continuation are obtained through a lift of the bounds at fixed parameter values, which simply amounts to a change of product and norm.
This is an important appeal of our method, namely that once the problem is understood at a fixed parameter value, continuation requires, so to speak, no extra estimates.

Based on the pointwise estimates derived in Appendix~\ref{app:ch} and on the discussions of Section~\ref{sec:punct2unif}, we take
\begin{equation*}
    Y = \| \Pi_{3 K} \cA \left(\Delta u + \theta_2 * (u - u^{* 3} - \theta_1)\right) \|_{\ell^{1, \otimes2}_\omega(\cX)},
\end{equation*}
\begin{equation*}
    Z_1 = \max \left(
\| \Pi_{3 K} - \Pi_{5 K} \cA D \tilde\cF(\bu) \Pi_{3 K} \|_{\B\left(\ell^{1, \otimes2}_\omega(\cX),\, \ell^{1, \otimes2}_\omega(\cX)\right)},\ \frac{1}{\pi^2 (K+1)^2} \| \theta_2 * (e_{0,0} - 3 \bu^{* 2}) \|_{\ell^{1, \otimes2}_\omega(\cX))}
\right),
\end{equation*}
where $e_{0,0}$ is the element of $\ell^{1, \otimes2}_\omega(\cX)$ corresponding to the constant function $1$ (i.e., $(e_{0,0})_{0,0}=1$ and $(e_{0,0})_{n,k}=0$ otherwise, for $n\in\N^2$ and $k\in\N$), and
\begin{equation*}
    Z_2 = 3\|\theta_2\|_{\ell^{1,\otimes2}_\omega(\cX)}\|\cA\|_{\B\left(\ell^{1, \otimes2}_\omega(\cX),\, \ell^{1, \otimes2}_\omega(\cX)\right)} (2\|\bu\|_{\ell^{1,\otimes2}_\omega(\cX)} + \rstar),
\end{equation*}
where
\begin{equation*}
\| \cA \|_{\B\left(\ell^{1,\otimes2}_\omega(\cX),\ell^{1,\otimes2}_\omega(\cX)\right)} = \max\left( \|\cA_{\le K}\|_{\B\left(\ell^{1,\otimes2}_\omega(\cX),\ell^{1,\otimes2}_\omega(\cX)\right)}, \frac{1}{\pi^2 (K+1)^2} \right),
\end{equation*}
with $\cA_{\leq K} = \cM(A_{\leq K})$.
All the finite calculations required to evaluate these estimates are performed following the discussion in Section~\ref{sec:punct2unif}.

\begin{figure}[ht!]
\centering
\includegraphics[scale=0.6]{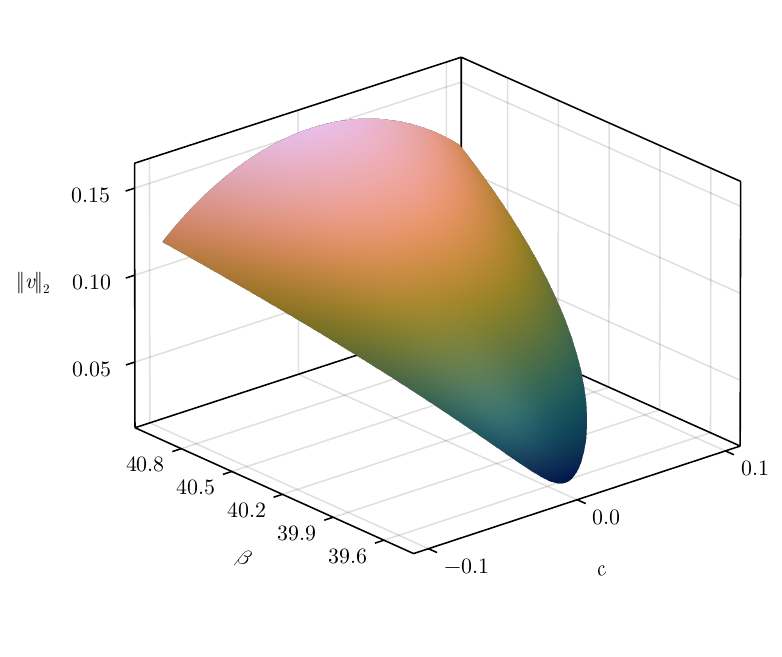}
\caption{A validated manifold of approximate steady states $\bu$ of the Cahn--Hilliard equation~\ref{eq:ch}. This is the approximation obtained using the parameterization $\theta$ of $\Lambda$ without pinch given by~\eqref{eq:parabolacurves1}, but the other one looks indistinguishable at this scale.}
\label{fig:ch_manifold}
\end{figure}

\paragraph{Results of the code.}
We choose an order $(N_1,N_2) = (16, 16)$ in Chebyshev and an order $K = 20$ in Fourier for approximating the steady-states.
The weight sequences are chosen trivial $\chi_k \equiv 1$ and $\omega_n \equiv 1$.
For both parameterizations of $\Lambda$ (with pinch~\eqref{eq:parabolacurves1} and without~\eqref{eq:parabolacurves2}) the execution of the code in \cite{CODE} produces a two-dimensional manifold of approximate solutions $\bu$, represented on Figure~\ref{fig:ch_manifold}, and certifies the following bounds
\begin{alignat*}{3}
Y^{(no \; pinch)} &= 1.23433 \times 10^{-5}, \qquad
&&Y^{(pinch)} &&= 3.6778 \times 10^{-5}, \\
Z_1^{(no \; pinch)} &= 3.24132 \times 10^{-2}, \qquad
&&Z_1^{(pinch)} &&= 9.65821 \times 10^{-2}, \\
Z_2^{(no \; pinch)} &= 540.4, \qquad
&&Z_2^{(pinch)} &&= 489.731, 
\end{alignat*}
with
%
\begin{equation*}
\rstar^{(no \; pinch)} = 1.23432 \times 10^{-4}, \qquad
\rstar^{(pinch)} = 3.67779 \times 10^{-4}, \qquad
\end{equation*}
and a valid error bound for the entire manifold is, in each case,
\begin{equation*}
r^{(no \; pinch)} = 1.28025 \times 10^{-5}, \qquad r^{(pinch)} = 4.11693 \times 10^{-5}.
\end{equation*}
It is remarkable that despite the striking difference in the density of the Chebyshev nodes, as shown on Figure~\ref{fig:coons_parabola}, and the resulting different maps $\theta:s\mapsto(c,\beta)$ shown on Figure~\ref{fig:ch_param}, the final error bound is of the same order in both cases, and only slightly worse for the pinched parameterization of the boundary curves.
Admittedly, this example is somewhat simple (though non trivial...) as a few Fourier modes sufficed to represent accurately each steady-state on the solution manifold.
For a more difficult problem, however, it might be more critical to parameterize the boundary with some amount of care.

We also point out the clear computational and accuracy advantages of our strategy over the approach of~\cite{GamLesPug16}.
In that work, the solution manifold was represented by a triangulation, and a separate validation was carried out on each simplex.
It was reported that this piecewise quadratic approximation of the manifold required approximately 39 hours, used 500 processors, with a total CPU time of about 680 days.
By contrast, the approach developed in the present article validates essentially the same solution manifold (up to a different boundary) in roughly 2 minutes on a laptop equipped with a single M5 chip and 48 GB of RAM.

Moreover, while~\cite{GamLesPug16} does not report rigorous error bounds, private communication with the authors confirmed that the error bound obtained here (of order $10^{-5}$, and produced through a single validation) is smaller than each of their individual proofs over a simplex, which is no better than order $10^{-3}$.

\begin{figure}[ht!]
\centering
\includegraphics[scale=0.6]{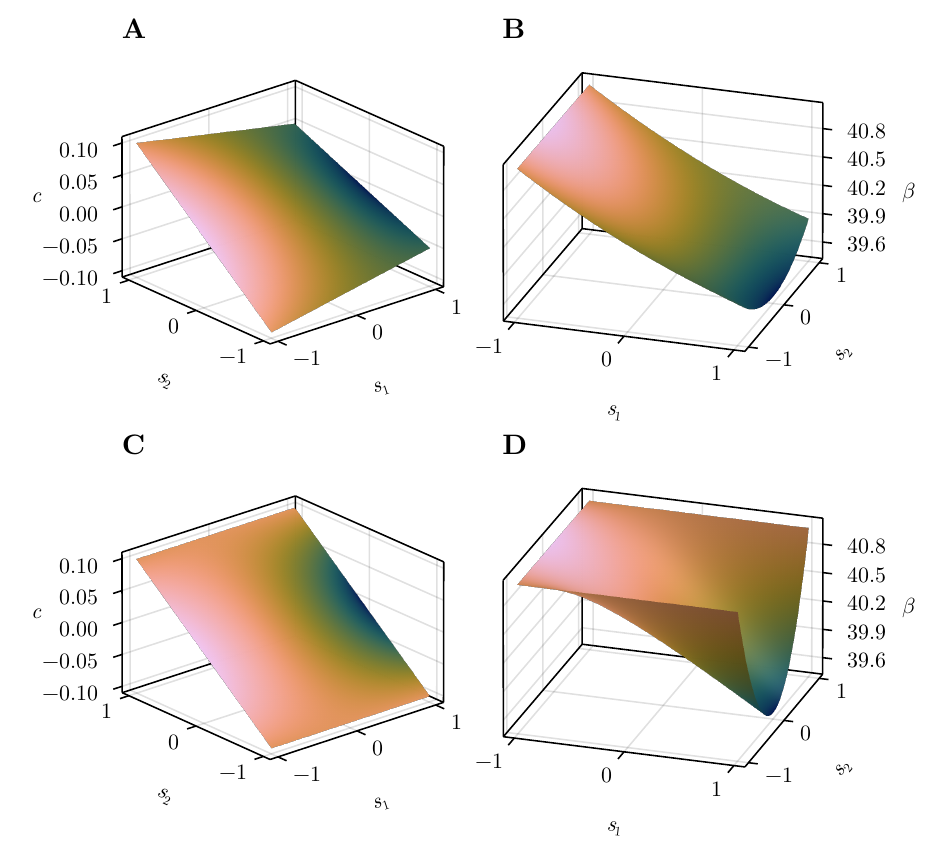}
\caption{The maps $c$ and $\beta$ forming the parameterization $\theta=(c,\beta):[-1,1]^2\to\Lambda$, obtained using the set of boundary curves with no pinch~\eqref{eq:parabolacurves1} (\textbf{A} and \textbf{B}) and with pinch~\eqref{eq:parabolacurves2} (\textbf{C} and \textbf{D}).}
\label{fig:ch_param}
\end{figure}

\subsection{Pseudo-arclength continuation of steady-states in the Shigesada--Kawasaki--Teramoto system}
\label{sec:skt}

The so-called SKT model is a system of PDEs introduced in the seminal work~\cite{ShiKawTer79}, and is used to model the evolution of two competing species. It writes
\begin{equation}\label{eq:skt}
\begin{cases}
\partial_t v_1 = \Delta ((d_1 + d_{11} v_1 + d_{12} v_2)v_1) + (r_1 - a_1 v_1 - b_1 v_2)v_1, \\
\partial_t v_2 = \Delta ((d_2 + d_{21} v_1 + d_{22} v_2)v_2) + (r_2 - b_2 v_1 - a_2 v_2)v_2, \\
\frac{\partial v_1}{\partial n} = \frac{\partial v_2}{\partial n} = 0,
\end{cases}
\end{equation}
where $v_1 = v_1(t,y)$ and $v_2 = v_2(t,y)$ are the population densities of two interacting species. We consider here the one-dimensional case, and fix the domain to be $(0, 1)$.

The reactions terms are standard Lotka-Volterra terms with signs indicating intra-specific and inter-specific competition (all the parameters of the model are nonnegative). The key feature of this model is the presence of nonlinear diffusion terms: the diffusion rate of each species depends on the local density of both species. In particular, the cross-diffusion terms can give rise to a repulsive effect which leads to \emph{spatial segregation}: the two species co-exist but mostly concentrate in different regions of the domain $\Omega$. Mathematically, this corresponds to the existence of non-homogeneous steady-states of~\eqref{eq:skt} exhibiting this type of patterns. 

The existence of nontrivial steady-states of~\eqref{eq:skt} has been studied extensively, through a wide variety of techniques such as bifurcation theory, singular perturbation theory or fixed point index theory, see e.g.~\cite{MimKaw80,MimNisTesTsu84,LouNi96,RyuAhn03}, which provide a qualitative understanding on the conditions required on the many parameters of~\eqref{eq:skt} for non-homogeneous steady-states to exist. However, numerical studies like~\cite{IidMimNim06,IzuMim08,BreKueSor21} show that the steady-states of~\eqref{eq:skt} can be very diverse, already in the one dimensional case, and that many of them can co-exists for a given set of parameter values, leading to intricate bifurcation diagrams as shown for instance on Figure~\ref{fig:skt_bif}. 
Proving the existence, and characterizing the shape, of all these different steady-states seems very hard by purely analytic means, but computer-assisted techniques provide a valuable tool to attack such questions. Indeed, the paper~\cite{Bre22} contains a general computer-assisted methodology based on Theorem~\ref{th:NK}, allowing to study steady-states of~\eqref{eq:skt} for fixed parameters. It should be noted that rigorous validation of branches of solutions was already conducted in~\cite{BreLesVan13}, but only for a system approximating~\eqref{eq:skt}. Moreover, this was done using the piece-wise linear approach described in the introduction, which meant that validating a single piece of branch (between two successive bifurcations) required splitting the branch into thousands of small sub-pieces, as well as the derivation of extra estimates for the continuation. We show here that the approach proposed in this paper allows for the validation of a whole branch \emph{in one go} (or at least, using only a couple of sub-pieces), and with no extra estimates compared to~\cite{Bre22}. As in Figure~\ref{fig:skt_bif}, we now take $\lambda = d_1 = d_2$ as the bifurcation parameter, while all other parameters are fixed.



Steady-state solutions $v = (v_1, v_2)$ of the SKT system satisfy the time-independent version of \eqref{eq:skt}, that is
\begin{equation}
0 = F(v, \lambda) = \Delta \Phi(v, \lambda) + R(v),
\end{equation}
where
\begin{equation}
\Phi(v, \lambda) \bydef \begin{pmatrix} (\lambda + d_{11} v_1 + d_{12} v_2)v_1 \\ (\lambda + d_{21} v_1 + d_{22} v_2)v_2 \end{pmatrix}, \qquad
R(v) \bydef \begin{pmatrix} (r_1 - a_1 v_1 - b_1 v_2)v_1 \\ (r_2 - b_2 v_1 - a_2 v_2)v_2 \end{pmatrix}.
\end{equation}
The bifurcation diagram of Figure~\ref{fig:skt_bif} indicates the occurrence of fold bifurcations along several solution branches.
To go through these singular points, we use the pseudo-arclength method as detailed in Section~\ref{sec:pseudo_arclength} so that we consider
\[
F_\textnormal{pa}(u, s) = \begin{pmatrix} F(v, \lambda) \\ \langle *, * \rangle \end{pmatrix},
\]
where $u = (v, \lambda)$ are the unknown variables, and $s$ is the pseudo-arclength parameter. 

Similarly to what we did for the Cahn--Hilliard example, for fixed parameters we look for steady-states in the space $\cX = \ell^1_\chi(\R) \times \ell^1_\chi(\R)$ (one copy of $\ell^1_\chi(\R)$ for each component of $v$), endowed with the norm
\[
\| v \|_\cX = \| v_1 \|_{\ell^1_\chi(\R)} + \| v_2 \|_{\ell^1_\chi(\R)},
\]
and take $\cY = \ell^1_\zeta(\R) \times \ell^1_\zeta(\R)$.
The extended zero-finding problem $F_\pa$ is set between $\cX_\pa=\cX\times \R$ and $\cY_\pa=\cY\times \R$. Following the general setting developed in Section~\ref{sec:framework}, and in particular Remark~\ref{rem:systems}, we then study the continuation problem by considering the superposition operator $\cF_\pa$ associated to $F_\pa$, on the space $\ell^1_\omega(\ell^1_\chi(\R))\times \ell^1_\omega(\ell^1_\chi(\R))\times \ell^1_\omega(\R) \cong \ell^1_\omega(\cX_\pa)$, for which we take
\begin{equation*}
    \left\Vert u\right\Vert_{\ell^1_\omega(\ell^1_\chi(\R))\times \ell^1_\omega(\ell^1_\chi(\R))\times \ell^1_\omega(\R)} = \left\Vert v_1\right\Vert_{\ell^1_\omega(\ell^1_\chi(\R))} + \left\Vert v_2\right\Vert_{\ell^1_\omega(\ell^1_\chi(\R))} + \left\Vert \lambda\right\Vert_{\ell^1_\omega(\R)}, 
\end{equation*}
for all $u=(v,\lambda) = (v_1,v_2,\lambda) \in \ell^1_\omega(\ell^1_\chi(\R))\times \ell^1_\omega(\ell^1_\chi(\R))\times \ell^1_\omega(\R)$. In order to shorten subsequent formula, we introduce $\cS = \ell^1_\omega(\ell^1_\chi(\R))\times \ell^1_\omega(\ell^1_\chi(\R))$ and $\cS_\pa = \cS\times \ell^1_\omega(\R)$. 

Compared to the Cahn--Hilliard example of Section~\ref{sec:ch}, the main difference here is that the presence nonlinear diffusion term $\Phi$ requires a more involved approximate inverse $A$. We follow the construction introduced in~\cite{Bre22}, and then use the same bounds, with minor modifications to account for the pseudo-arclength continuation setting. Splitting the Fréchet derivative of $F_\textnormal{pa}$ at $\bar{u} = (\bv, \bar{\lambda})$ as
\begin{equation}
D_u F_\text{pa}(\bar{u}(s), s) = 
\begin{pmatrix} \Delta D_v \Phi(\bv(s), \bar{\lambda}(s)) & 0 \\ 0 & 0 \end{pmatrix} + 
\begin{pmatrix} D R(\bv(s)) & \Delta D_\lambda \Phi(\bv(s), \bar{\lambda}(s)) \\ * & * \end{pmatrix},
\end{equation}
we define, for some integer $K_A \ge 1$,
\begin{equation}
A(s) = A_{\le K_A}(s) \Pi_{\le K_A} + \begin{pmatrix} (W(s) - \Pi_{\le K_A}  W(s) \Pi_{\le K_A})\Delta_0^{-1}  & 0 \\ 0 & 0 \end{pmatrix},
\end{equation}
where $W(s) \approx D_v \Phi(\bv(s), \bar{\lambda}(s))^{-1}$ is constructed by inverting approximately in grid space the collection of 2-by-2 matrix $D_v \Phi([\bv(s)](y), \bar{\lambda}(s))$ and going back to coefficient space. Here again, for fixed $s$, $A(s)$ is a Fredholm operator of index $0$ (see~\cite{Bre22} for details).

\begin{remark}
The two truncation orders $K_A$ and $K$ play distinct roles and can be chosen independently.
The former controls the order to which $A$ accurately approximates $D\cF_\textnormal{pa}$ (controlled by the $Z_1$ bound), while the latter controls the accuracy with which $\bar{u}$ approximates the solution (controlled by the $Y$ bound).
Often $K_A$ is chosen equal to $K$ for simplicity (as we did in the Cahn--Hilliard example of Section~\ref{sec:ch}), however it will be helpful here to take a larger value $K_A>K$.
\end{remark}


As in the Cahn--Hilliard example in Section~\ref{sec:ch}, the $Y$ bound simply taken to be $Y=\Vert \cA \cF_\pa(\bu) \Vert_{\cS_\pa}$, as this is directly computable. We show below the formulas for $Z_1$ and $Z_2$, and again refer to~\cite{Bre22} for more details on their derivation in the pointwise case. Based on those, we can take
\begin{align*}
    Z_1 = \max & \Big( \| \Pi_{\leq K+K_A} - \Pi_{\leq 2K+K_A} \cA  D_u \cF_\pa(\bu) \Pi_{\leq K+K_A}\|_{\B\left(\cS_\pa,\cS_\pa\right)}  , \\
    &\ \ \| \mathcal{I} - \cM(W) \cM(D_v \Phi (\bv, \bar{\lambda})) \|_{\B\left(\cS,\cS\right)} + \frac{1}{\pi^2(K_A+1)^2} \| \cM(W) \|_{\B\left(\cS,\cS\right)} \| \cM(DR(\bv)) \|_{\B\left(\cS,\cS\right)} \Big),
\end{align*}
where the Fr\'echet derivatives of $\Phi$ and $R$ read
\begin{align*}
    D_v\Phi(v,\lambda) &= \begin{pmatrix} \lambda + 2d_{11} v_1 + d_{12}v_2 & d_{12} v_1 \\ d_{21} v_2 & \lambda + d_{21}v_1 + 2d_{22}v_2 \end{pmatrix}, \\
DR(v) &= \begin{pmatrix} r_1 - 2a_1 v_1 - b_1 v_2 & -b_1 v_1 \\ -b_2 v_2 & r_2 - b_2 v_1 - 2a_2 v_2 \end{pmatrix}.
\end{align*}
In practice, we in fact replace the $\B\left(\cS,\cS\right)$ norms appearing here by the simpler upper bounds provided by~\eqref{eq:normAgeneral_syst}. In particular, we never actually build the various multiplication operators involved in those estimates. 

The remaining Lipschitz estimate $Z_2$ is obtained by first noting that since the nonlinearities are quadratic so we may take $\rstar = \infty$. Next, denoting by $\cBL(\cX\times\cY,\cZ)$ the space of bounded bilinear operators from $\cX\times\cY$ into $\cZ$, we have that
\begin{align*}
\| \mathcal{A} D^2 \cF_\text{pa}(u) \|_{\cBL(\cS_\pa\times\cS_\pa,\cS_\pa)}
&\le \left\Vert \cA \begin{pmatrix}\Delta & 0 \\ 0 & 0 \end{pmatrix} \right\Vert_{\B(\cS_\pa,\cS_\pa)} \| \cM(D_v^2 \Phi(v, \lambda)) \|_{\cBL(\cS\times\cS,\cS)} \\
&\quad +
\| \cA \|_{\B(\cS_\pa,\cS_\pa)} \| \cM(D^2 R (v)) \|_{\cBL(\cS\times\cS,\cS)},
\end{align*}
where
\begin{align*}
\|\cM(D_v^2 \Phi(v, \lambda)) \|_{\cBL(\cS\times\cS,\cS)} &\le \max\Big( \max(|2d_{11}|, |d_{12}|) + |d_{21}|,  |d_{12}| + \max(|d_{21}|, |2d_{22}|) \Big) = C_1, \\
\| \cM(D^2 R (v)) \|_{\cBL(\cS\times\cS,\cS)} &\le \max\Big( \max(|2a_1|, |b_1|) + |b_2|, |b_1| + \max(|b_2|, |2a_2|) \Big) = C_2.
\end{align*}
and therefore we can take 
\begin{equation*}
    Z_2 = \left\Vert \cA \begin{pmatrix}\Delta & 0 \\ 0 & 0 \end{pmatrix} \right\Vert_{\B(\cS_\pa,\cS_\pa)} C_1 + \| \cA \|_{\B(\cS_\pa,\cS_\pa)} C_2.
\end{equation*}  
Once again, in practice we replace the remaining operator norms by upper bounds involving a finite part and then a tail estimate obtained here using~\eqref{eq:normAgeneral_syst}:
\begin{align*}
    \| \cA \|_{\B(\cS_\pa,\cS_\pa)}\leq \max \left(\|  \cA\Pi_{\leq K_A} \|_{\B(\cS_\pa,\cS_\pa)},\, \frac{1}{\pi^2(K_A+1)^2} \| \cM(W) \|_{\B\left(\cS,\cS\right)} \right),
\end{align*}
and similarly
\begin{align*}
  \left\Vert \cA \begin{pmatrix}\Delta & 0 \\ 0 & 0 \end{pmatrix} \right\Vert_{\B(\cS_\pa,\cS_\pa)} \leq   
  \max \left(\left\Vert \cA\Pi_{\leq K_A} \begin{pmatrix}\Delta & 0 \\ 0 & 0 \end{pmatrix} \right\Vert_{\B(\cS_\pa,\cS_\pa)},\, \| \cM(W) \|_{\B\left(\cS,\cS\right)} \right).
\end{align*}

Finally, let us point out that sharper (but more cumbersome to write down) estimates could be obtained for the final term appearing in $Z_1$ as well as for the $Z_2$ bound, and we refer again to~\cite{Bre22} for details.

\begin{figure}[ht!]
\centering
\includegraphics{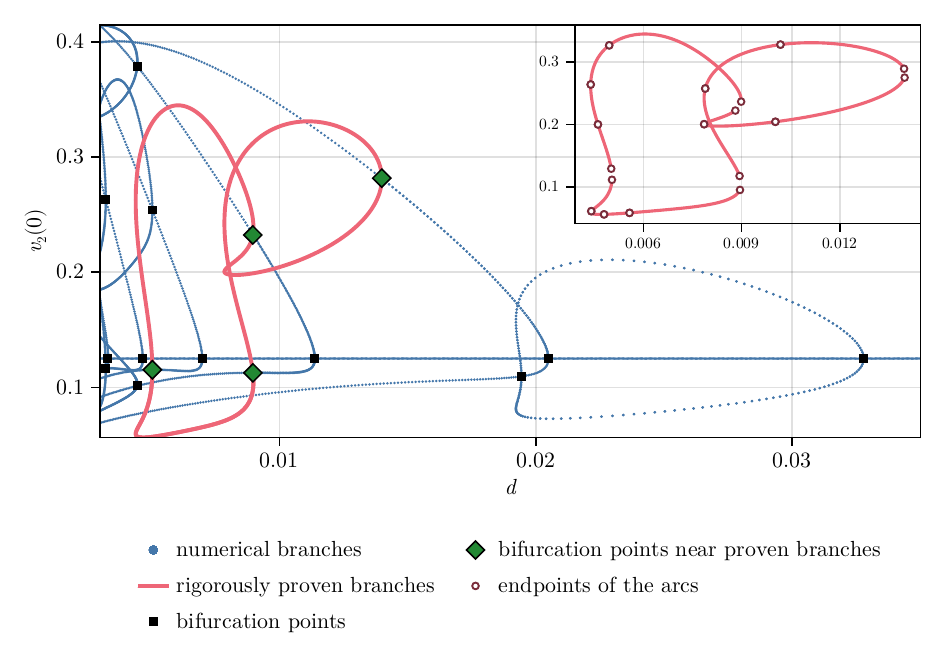}
\caption{Bifurcation diagram of steady-states for the SKT system~\ref{eq:skt} on the domain $(0,1)$, and with $d_{11}=d_{22}=d_{21}=0$, $d_{12}=3$, $r_1=5$, $r_2=2$, $a_1=a_2=3$ and $b_1=b_2=1$.}
\label{fig:skt_bif}
\end{figure}

\paragraph{Results of the code.}
The execution of the code in \cite{CODE} validates the fourteen pieces of curve depicted in red on Figure~\ref{fig:skt_bif}.
The weight sequences are chosen trivial $\chi_k \equiv 1$ and $\omega_n \equiv 1$.
For all pieces, we use $K = 80$ to approximate the steady-states in Fourier and $K_A = 165$ for the finite dimensional approximation of $DF$, while the arclength and the Chebyshev interpolation order $N$ are adapted to each piece.
The computations are reported in Table~\ref{tab:skt_validation}. 

\begin{table}[h!]
\centering
\begin{tabular}{lcccccc}
\toprule
Name & $N$ & Arclength & $Y$ & $Z_1$ & $Z_2$ & $r$ \\
\midrule
u\_bar1a & 16 & 0.38 & $9.34506\times10^{-11}$ & 0.165167 & $1.07247\times10^{4}$ & $1.11941\times10^{-10}$ \\
u\_bar1b & 16 & 0.19 & $5.40238\times10^{-7}$ & 0.267103 & $1.55357\times10^{4}$ & $7.42977\times10^{-7}$ \\
u\_bar1c & 24 & 0.26 & $4.67456\times10^{-9}$ & 0.216850 & $1.60354\times10^{4}$ & $5.96929\times10^{-9}$ \\
u\_bar2a & 16 & 0.40 & $1.23914\times10^{-7}$ & 0.453002 & $2.44590\times10^{4}$ & $2.27692\times10^{-7}$ \\
u\_bar2b & 16 & 0.10 & $2.47429\times10^{-9}$ & 0.628025 & $3.38282\times10^{4}$ & $6.65375\times10^{-9}$ \\
u\_bar2c & 20 & 0.10 & $3.84281\times10^{-7}$ & 0.700900 & $3.86434\times10^{4}$ & $1.41394\times10^{-6}$ \\
u\_bar2d & 24 & 0.32 & $4.40668\times10^{-9}$ & 0.534234 & $3.27280\times10^{4}$ & $9.46429\times10^{-9}$ \\
u\_bar3a & 16 & 0.20 & $6.81293\times10^{-10}$ & 0.402035 & $2.55061\times10^{4}$ & $1.13938\times10^{-9}$ \\
u\_bar3b & 16 & 0.10 & $8.21983\times10^{-9}$ & 0.622886 & $3.48449\times10^{4}$ & $2.18187\times10^{-8}$ \\
u\_bar3c & 16 & 0.10 & $5.59223\times10^{-7}$ & 0.704403 & $3.86498\times10^{4}$ & $2.21161\times10^{-6}$ \\
u\_bar3d & 24 & 0.52 & $1.18497\times10^{-8}$ & 0.574002 & $2.99639\times10^{4}$ & $2.78435\times10^{-8}$ \\
u\_bar4a & 16 & 0.20 & $4.03282\times10^{-9}$ & $0.208423$ & $1.42687\times10^{4}$ & $5.09489\times10^{-9}$ \\
u\_bar4b & 24 & 0.20 & $1.04317\times10^{-7}$ & $0.277004$ & $1.60209\times10^{4}$ & $1.44516\times10^{-7}$ \\
u\_bar4c & 16 & 0.40 & $1.21891\times10^{-10}$ & $0.168895$ & $1.11889\times10^{4}$ & $1.46663\times10^{-10}$ \\
\bottomrule
\end{tabular}
\caption{Validation bounds and resulting error for each of the fourteen pieces depicted on the top right of Figure~\ref{fig:skt_bif}. For each piece, the table reports the Chebyshev truncation order $N$, the chosen arclength, the bounds $Y$, $Z_1$, and $Z_2$, and the validated error bound $r$. The columns ``Name'' refers to the label given to each arc following the convention of the code~\cite{CODE}.}
\label{tab:skt_validation}
\end{table}

Since four bifurcations occur along the main family branch, at least four pieces are required.
Each of these four pieces could, in principle, be validated using a single Chebyshev expansion. However, it is well known that even for smooth functions in practice it can be advantageous to use a piece-wise Chebyshev representation~\cite{AitDri18}.
In our case, the additional subdivisions are motivated by parameter regions where $\| \cM(W) \|_{\B\left(\cS,\cS\right)}$ grows noticeably, making the term $\frac{1}{\pi^2(K_A+1)^2} \| \cM(W) \|_{\B\left(\cS,\cS\right)} \| \cM(DR(\bv)) \|_{\B\left(\cS,\cS\right)}$ dominant in the bound $Z_1$.
Consequently, a relatively large value $K_A = 165$ was necessary to obtain $Z_1 < 1$.
We did not attempt to optimize the subdivision strategy, and a more efficient partition may therefore be possible~\cite{BerShe21}.

\section*{Acknowledgments}

M. Breden and O. H\'{e}not were supported by the ANR project CAPPS: ANR-23-CE40-0004-01.
O. H\'{e}not was also supported by the National Science and Technology Council (NSTC) under grant No. 115-2115-M-002-001-MY2.

\appendix
\section{Bounds for the Cahn--Hilliard equation at fixed parameters}
\label{app:ch}

For the sake of completeness, we briefly derive here the bounds $Y$ and $Z_1$ entering the existence proof of a steady-state of the Cahn--Hilliard equation~\eqref{eq:steadystateCH}.
This appendix is meant as background material for the reader.
A more thorough account can be found, for instance, in \cite[Section~1.3.1]{Bre25}.

In what follows, we fix a parameter value $\lambda = (c, \beta) \in \Lambda$ and write
\[
F(v) = \Delta v + \beta (v - v^{\odot 3} - c),
\]
where we recall that
\[
v = \begin{pmatrix}
v_0 \\ v_1 \\ \vdots
\end{pmatrix} \in \cX, \qquad v(y) = \sum_{k \in \Z} v_{|k|} \cos(\pi k y),
\]
and $\odot$ denotes the product of cosine series in $\cX$.
 
\paragraph{The approximate inverse $A$.}
Since $DF(v) = \Delta + \cM\bigl(\beta(e_0 - 3v^{\odot 2})\bigr)$, the Laplacian dominates in the tail (or, said differently, the multiplication operator is relatively compact with respect to $\Delta$).
We therefore choose as approximate inverse a finite-rank perturbation of $\Delta_0^{-1}$, where $\Delta_0 \bydef \Delta + \Pi_{\le 0}$ so that the inverse is given by
\[
A = A_{\le K} \Pi_{\le K} + \Delta_0^{-1} \Pi_{> K}, \qquad A_{\le K} : \Pi_{\le K} \cX \to \Pi_{\le K} \cX.
\]
The finite block $A_{\le K}$ is obtained by numerically inverting $\Pi_{\le K}DF(\bv)\Pi_{\le K}$.
Because $A$ is a finite-rank perturbation of the inverse Laplacian, $A$ indeed maps $\cY$ into $\cX$ and is a Fredholm operator of index $0$.
 
\paragraph{The bound $Y$.}
The bound $Y$ is straightforward since $\bar{v}$ has finitely many nonzero coefficients, so does $A F(\bar{v})$.
One may simply take $Y = \|A F(\bar{v})\|_{\cX}$, evaluated rigorously with interval arithmetic.
 
\paragraph{The bound $Z_1$.}
The strategy is to split $I - A \, DF(\bar{v})$ column-wise since the $\ell^1$ operator is norm is the supremum of the $\ell^1$ norm of the columns.
The (finite) block of columns of order at most $q$ is handled by a finite computation, while the (infinitely many) remaining columns are estimated analytically, taking advantage of the decay of $\Delta_0^{-1} \Pi_{> K} \to 0$ as $K \to \infty$.
For any splitting order $q$, the operator norm reads
\[
\|I - A \, DF(\bar{v})\|_{\B(\cX,\cX)} = \max \Bigl( \| \Pi_{\le q} - A \, DF(\bar{v}) \Pi_{\le q} \|_{\B(\cX,\cX)}, \, \| \Pi_{> q} - A \, DF(\bar{v}) \Pi_{> q} \|_{\B(\cX,\cX)} \Bigr),
\]
thanks to our choice of a weighted $\ell^1$ norm on $\cX$.
The order $q$ is then chosen so as to make only the factor $\Delta_0^{-1} \Pi_{> K}$ appear in the second term of the maximum (and not the finite part $A_{\leq K}$ of $A$).
Since $\bar{v} \in \Pi_{\le K} \cX$ has finitely many nonzero coefficients, $DF(\bar{v})$ is a banded operator of bandwidth $K$ from which the next identities follow
\[
DF(\bar{v}) \Pi_{\le q} = \Pi_{\le q + K} DF(\bar{v}) \Pi_{\le q}, \qquad
DF(\bar{v}) \Pi_{> q} = \Pi_{> q - K} DF(\bar{v}) \Pi_{> q}.
\]
Hence, a convenient choice is $q = 2K$, and we get
\[
\|I - A \, DF(\bar{v})\|_{\B(\cX,\cX)} = \max \Bigl( \| \Pi_{\le 2K} - \Pi_{\le 3K} A \, DF(\bar{v}) \Pi_{\le 2K} \|_{\B(\cX,\cX)}, \, \| \Pi_{> 2K} - A \Pi_{> K} DF(\bar{v}) \Pi_{> 2K} \|_{\B(\cX,\cX)} \Bigr).
\]
The first term is simply the operator norm of a finite rectangular matrix, which can be computed rigorously with interval arithmetic.
As for the second term, using that $A \Pi_{> K} = \Delta_0^{-1} \Pi_{> K}$ and that $\Delta_0^{-1}$ inverts $\Delta$ on the range of $\Pi_{> 0}$, we obtain
\begin{align*}
\Pi_{> 2K} - A \Pi_{> K} DF(\bar{v}) \Pi_{> 2K}
&= \Pi_{> 2K} - \Delta_0^{-1} \Pi_{> K} DF(\bar{v}) \Pi_{> 2K} \\
&= \Pi_{> 2K} - \Delta_0^{-1} \Pi_{> K} \bigl(\Delta + \cM(\beta(e_0 - 3\bar{v}^{\odot 2})) \bigr) \Pi_{> 2K} \\
&= - \Delta_0^{-1} \Pi_{> K} \, \cM\bigl(\beta(e_0 - 3\bar{v}^{\odot 2})\bigr) \Pi_{> 2K},
\end{align*}
and therefore, since $\|\Delta_0^{-1} \Pi_{> K}\|_{\B(\cX,\cX)} \le \frac{1}{\pi^2 (K+1)^2}$,
\[
\| \Pi_{> 2K} - A \Pi_{> K} DF(\bar{v}) \Pi_{> 2K} \|_{\B(\cX,\cX)} \le \frac{|\beta|}{\pi^2 (K+1)^2} \|e_0 - 3\bar{v}^{\odot 2}\|_{\cX}.
\]
Combining the two estimates, we may take
\[
Z_1 = \max\left( \| \Pi_{\le 2K} - \Pi_{\le 3K} A \, DF(\bar{v}) \Pi_{\le 2K} \|_{\B(\cX,\cX)} , \, \frac{|\beta|}{\pi^2 (K+1)^2} \|e_0 - 3\bar{v}^{\odot 2}\|_{\cX}\right),
\]
and for $K$ sufficiently large we can expect to satisfy the condition $Z_1 < 1$ of Theorem~\ref{th:NK}, as desired.
 
\paragraph{The bound $Z_2$.} Finally, the $Z_2$ estimate follows directly from the Banach algebra property of $\cX$. For all $u\in B_{\rstar}(\bu)$,
\begin{align*}
\|A ( DF(u) - DF(\bu))\|_{\B(\cX,\cX)}
&\le \|A\|_{\B(\cX,\cX)} \|3 \beta (u^{\odot 2} - \bu^{\odot 2})\|_{\cX} \\
&\le 3|\beta|\|A\|_{\B(\cX,\cX)} \|u + \bu\|_{\cX} \|u - \bu\|_{\cX} \\
&\le 3|\beta|\|A\|_{\B(\cX,\cX)} (2\|\bu\|_{\cX} + \rstar) \|u - \bu\|_{\cX},
\end{align*}
hence we take $Z_2 = 3|\beta|\|A\|_{\B(\cX,\cX)} (2\|\bu\|_{\cX} + \rstar)$.


\bibliographystyle{abbrv}
\bibliography{bibfile}

\end{document}